\documentclass[11pt,a4paper,oneside]{amsart}

\usepackage{amsmath} 
\usepackage{amssymb} 
\usepackage{amsthm} 
\usepackage[english]{babel} 
\usepackage[font=small,justification=centering]{caption} 
\usepackage{subcaption} 
\captionsetup[subfloat]{labelfont=normalfont}
\usepackage[nodayofweek]{datetime}
\usepackage[shortlabels]{enumitem} 
\usepackage[T1]{fontenc} 
\usepackage[utf8]{inputenc} 
\usepackage{ifthen} 
\usepackage{mathabx} 
\usepackage{mathtools} 
\usepackage[dvipsnames]{xcolor} 
\usepackage[pdftex,  colorlinks=true]{hyperref} 
    \hypersetup{urlcolor=RoyalBlue, linkcolor=RoyalBlue,  citecolor=black}
\usepackage{setspace} 
\usepackage{tikz-cd} 
\usepackage{xfrac} 
\usepackage[capitalise]{cleveref}
\usepackage{comment}


\allowdisplaybreaks



\makeatletter
\def\@tocline#1#2#3#4#5#6#7{\relax
  \ifnum #1>\c@tocdepth 
  \else
    \par \addpenalty\@secpenalty\addvspace{#2}%
    \begingroup \hyphenpenalty\@M
    \@ifempty{#4}{%
      \@tempdima\csname r@tocindent\number#1\endcsname\relax
    }{%
      \@tempdima#4\relax
    }%
    \parindent\z@ \leftskip#3\relax \advance\leftskip\@tempdima\relax
    \rightskip\@pnumwidth plus4em \parfillskip-\@pnumwidth
    #5\leavevmode\hskip-\@tempdima
      \ifcase #1
       \or\or \hskip 1em \or \hskip 2em \else \hskip 3em \fi%
      #6\nobreak\relax
    \dotfill\hbox to\@pnumwidth{\@tocpagenum{#7}}\par
    \nobreak
    \endgroup
  \fi}
\makeatother


\newtheorem{thm}{Theorem}[section]
\newtheorem{lemma}[thm]{Lemma}

\newtheorem{prop}[thm]{Proposition}
\newtheorem{conjecture}[thm]{Conjecture}

\newtheorem{question}[thm]{Question}

\Crefmultiformat{thm}{Theorems~#2#1#3}{ and~#2#1#3}{, #2#1#3}{ and~#2#1#3}

\newtheorem{thmx}{Theorem}
\newtheorem{corx}[thmx]{Corollary}

\newtheorem*{thmA}{Theorem A}

\theoremstyle{definition}
\newtheorem{defn}[thm]{Definition}





\DeclareMathOperator{\Aut}{\mathrm{Aut}}

\DeclareMathOperator{\Ker}{\mathrm{Ker}}

\DeclareMathOperator{\tr}{\mathrm{tr}}

\DeclareMathOperator{\Comm}{\mathrm{Comm}}
\DeclareMathOperator{\Isom}{\mathrm{Isom}}

\DeclareMathOperator{\cay}{Cay}

\DeclareMathOperator{\Stab}{Stab}

\newcommand{\cala}{{\mathcal{A}}}

\newcommand{\call}{{\mathcal{L}}}
\newcommand{\calm}{{\mathcal{M}}}

\newcommand{\calt}{{\mathcal{T}}}








\newcommand{\SL}{\mathrm{SL}}
\newcommand{\PSL}{\mathrm{PSL}}

\newcommand{\CAT}{\mathrm{CAT}}


\newcommand{\EE}{\mathbb{E}}
\newcommand{\RH}{\mathbb{R}\mathbf{H}} 
\newcommand{\Htwo}{\RH^2}




\newcommand{\Sone}{\mathbb{S}^1}

\newcommand{\ocurrents}[1]{\mathcal{G}^+(#1)}

\newcommand{\ogeod}[1]{\mathcal{I}^+(#1)}

\newcommand{\odgeod}[1]{\mathcal{D}\ogeod{#1}}

\newcommand{\ocurves}[1]{\mathcal{C}^+(#1)}

\newcommand{\normL}[1]{\left\| {#1} \right\|_{\mathcal{L}}}
\newcommand{\modL}[1]{\left| {#1} \right|_{\mathcal{L}}}

\newcommand{\monoid}[1]{\left\langle\!{\cdot} #1 {\cdot}\!\right\rangle}




\newcommand{\ZZ}{\mathbb{Z}}

\newcommand{\RR}{\mathbb{R}}
\newcommand{\QQ}{\mathbb{Q}}


\newcommand{\dd}{\,\mathrm{d}}

\usepackage{tikz}
\usetikzlibrary{arrows,quotes,decorations.markings}
\tikzstyle{blackNode}=[fill=black, draw=black, shape=circle]

\usepackage[style=alphabetic, citestyle=alphabetic, sorting=nyvt, maxbibnames=99]{biblatex}
\AtEveryBibitem{\ifentrytype{article}{\clearfield{issn}}{}}
\AtEveryBibitem{\ifentrytype{article}{\clearfield{url}}{}}
\AtEveryBibitem{\ifentrytype{incollection}{\clearfield{url}}{}}
\AtEveryBibitem{\ifentrytype{book}{\clearfield{url}}{}}
\NewBibliographyString{toappear}
\DefineBibliographyStrings{english}{%
  toappear = {to appear},
}

\renewbibmacro*{in:}{%
  \iffieldundef{pubstate}
    {}
    {\printfield{pubstate}%
     \setunit{\addspace}%
     \clearfield{pubstate}}%
  \printtext{%
    \bibstring{in}\intitlepunct}}
\addbibresource{refs.bib}
\usepackage{csquotes}

\title[Biautomaticity and hierarchical hyperbolicity]{Commensurating HNN-extensions: hierarchical hyperbolicity and biautomaticity}
\usepackage[foot]{amsaddr}
\author{Sam Hughes}
\address[S.~Hughes]{Mathematical Institute, Andrew Wiles Building, Observatory Quarter, University of Oxford, Oxford OX2 6GG, UK}
\email{sam.hughes@maths.ox.ac.uk}

\author{Motiejus Valiunas}
\address[M.~Valiunas]{Instytut Matematyczny, Universytet Wroc{\l}awski, plac Grunwaldzki 2/4, 50-384 Wroc{\l}aw, Poland}
\email{valiunas@math.uni.wroc.pl}

\subjclass{20F65, 20F10, 57K20 (primary), 57M50, 37E30, 20F67 (secondary)}

\date{\today}

\begin{document}

\begin{abstract}
We construct a $\mathrm{CAT}(0)$ hierarchically hyperbolic group (HHG) acting geometrically on the product of a hyperbolic plane and a locally-finite tree which is not biautomatic.  This gives the first example of an HHG which is not biautomatic, the first example of a non-biautomatic $\mathrm{CAT}(0)$ group of flat-rank $2$, and the first example of an HHG which is injective but not Helly.  Our proofs heavily utilise the space of geodesic currents for a hyperbolic surface.
\end{abstract}
\maketitle

\section{Introduction}
Let $H$ be a locally compact group with Haar measure $\mu$.  A discrete subgroup $\Gamma<H$ is a \emph{lattice} if $\mu(H/\Gamma)$ is finite.  We say $\Gamma$ is \emph{uniform} is $H/\Gamma$ is compact.  Roughly speaking, we say a lattice $\Gamma<H_1\times H_2$ is \emph{irreducible} if the projection of $\Gamma\to H_i$ is non-discrete and if $\Gamma$ does not split as a direct product of two infinite groups (see \Cref{sec:prelim} for details).  Throughout we will denote the $n$-regular tree by $\calt_n$ and its automorphism group by $T_n$.

Automatic and biautomatic groups were developed in the 1980s; with a detailed account given in the book \cite{EpsteinEtAl} by Epstein, Cannon, Holt, Levy,  Paterson, and Thurston.  In the 1990s Alonso and Bridson introduced the class of semihyperbolic groups \cite{AlonsoBridson1995} which contains all $\CAT(0)$ and biautomatic groups.  In recent work of Leary and Minasyan \cite{LearyMinasyan2019}, the authors construct irreducible uniform lattices in $\Isom(\EE^{2n})\times T_{2m}$ ($m\geq2$, $n\geq1$), giving the first examples of $\CAT(0)$ groups which are not biautomatic.  These groups were classified up to isomorphism by the second author \cite{Valiunas2020} and studied in the context of fibring by the first author \cite{Hughes2022}.  It follows from \cite{Hughes2021a} and \cite{Valiunas2021a} that all \emph{known} examples of $\CAT(0)$ but not biautomatic groups are either constructed from or contain non-biautomatic Leary--Minasyan groups as subgroups.

In the 2010s, the coarse geometric class of hierarchically hyperbolic groups (HHGs) and spaces (HHSs) were introduced by Behrstock, Hagen and Sisto in \cite{BehrstockHagenSisto2017a} with the motivation coming from isolating the main geometric features common to mapping class groups and compact special groups.  Very roughly these are spaces admitting a coordinate system and hierarchy consisting of and parameterised by hyperbolic spaces, and groups of isometries acting geometrically whilst preserving the hierarchy and coordinate structure.  The theory has received a lot of attention; being studied and developed by numerous authors \cite{BehrstockHagenSisto2017,DurhamHagenSisto2017,DurhamHagenSisto2017,Spriano2018I,Spriano2018II,AbbottBehrstock2019,AbbottNgSpriano2019,FedericoRobbio2019,DurhamMinskySisto2020,RobbioSpriano2020,PetytSpriano2020}.

As previously mentioned some of the main motivation for, and examples of, HHGs come from $\CAT(0)$ cubical groups \cite{BehrstockHagenSisto2017,HagenSusse2020} which are known to be biautomatic by the work of Niblo and Reeves \cite{NibloReeves1998}.  A 2021 result of Haettel, Hoda and Petyt shows that HHGs are semihyperbolic \cite{HaettelHodaPetyt2021}, as a corollary this gave a new proof that mapping class groups are semihyperbolic (see also \cite{DurhamMinskySisto2020} and \cite{Hamenstaedt2009}).  One may hope that proving HHGs are biautomatic would give another proof that mapping class groups are biautomatic.  Thus, a natural question is whether every HHG is biautomatic?  It appears to be open whether any non-biautomatic Leary--Minasyan groups are HHGs---although experts expect them not to be.  In this paper we construct the first example of an HHG which is not biautomatic.

\begin{thmx}\label{thmx.main}
There exists a non-residually finite torsion-free uniform irreducible lattice $\Gamma<\PSL_2(\RR)\times T_{24}$ such that $\Gamma$ is a hierarchically hyperbolic group but is not biautomatic.
\end{thmx}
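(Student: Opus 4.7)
The strategy is to construct $\Gamma$ as a commensurating HNN-extension of a closed hyperbolic surface group $\Lambda<\PSL_2(\RR)$, mirroring the Leary--Minasyan construction but with $\Isom(\EE^{2n})$ replaced by $\PSL_2(\RR)$. Concretely, I would take $\Lambda$ to be an arithmetic uniform lattice (so that $\Comm_{\PSL_2(\RR)}(\Lambda)$ is dense) and select a commensurator $g$ together with a finite-index subgroup $\Lambda_0\nest\Lambda$ satisfying $g\Lambda_0g^{-1}\nest\Lambda$ and $[\Lambda:\Lambda_0]+[\Lambda:g\Lambda_0g^{-1}]=24$. Setting
\[
\Gamma=\langle\Lambda,t\mid t\lambda t^{-1}=g\lambda g^{-1},\ \lambda\in\Lambda_0\rangle,
\]
the prescription $t\mapsto g$ extends the $\Lambda$-action on $\RH^2$ to $\Gamma$, while the HNN splitting yields a $\Gamma$-action on the associated Bass--Serre tree $\calt_{24}$. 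The diagonal action on $\RH^2\times\calt_{24}$ is then geometric, exhibiting $\Gamma$ as the required uniform irreducible lattice; choosing $\Lambda_0$ deep enough (e.g.\ a congruence subgroup) ensures that $\Gamma$ is torsion-free, and the density of $\Comm(\Lambda)$ guarantees irreducibility via the non-discreteness of the $\PSL_2(\RR)$-projection.

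Non-residual finiteness of $\Gamma$ should follow from the usual trick for commensurating HNN-extensions: one exhibits an element $h\in\Lambda_0$ that is identified with $ghg^{-1}$ by $t$-conjugation in $\Gamma$ but whose conjugacy class in $\Lambda$ is not ``synchronised'' in any finite quotient, obstructing separability in the style of \cite{LearyMinasyan2019}. For the hierarchical hyperbolic structure, I would use the obvious three-domain index set $\{S,H,T\}$, where $S\nest S$ is the $\nest$-maximal domain with associated space $\RH^2\times\calt_{24}$, and $H,T$ are the two orthogonal hyperbolic factors, equipped with the coordinate projections; the HHG axioms reduce to elementary checks in this product setting, and the $\Gamma$-action is by automorphisms since it preserves each factor setwise. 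The $\CAT(0)$ assertion is immediate from the geometric action on the $\CAT(0)$ space $\RH^2\times\calt_{24}$.

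The crux of the theorem---and where the space $\currents$ of geodesic currents must enter decisively---is the failure of biautomaticity. The guiding principle is the Gersten--Short rationality theorem: in a biautomatic group, translation lengths in the word metric are rational. By projecting $\Gamma\to\PSL_2(\RR)$, the stable translation length of the stable letter $t$, or of well-chosen commutators $[t,\lambda]$, is controlled by a current-theoretic intersection invariant associated to the pair $(\Lambda,g\Lambda g^{-1})$ (morally $i(\mu_\Lambda,g_*\mu_\Lambda)$ for the Liouville-type current $\mu_\Lambda$). For generic $g\in\Comm(\Lambda)\setminus\Lambda$ this quantity is transcendental, so a careful choice of $g$ produces an element of $\Gamma$ whose translation length is irrational, contradicting biautomaticity.

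The main obstacle is precisely this last step: one must not only compute the relevant intersection numbers in $\currents$ and show they are irrational for some explicit commensurator $g$, but also promote this irrationality from the $\PSL_2(\RR)$-factor all the way to the word metric on $\Gamma$ itself, controlling the (potentially compensating) contribution from the tree factor $\calt_{24}$. Converting the translation-length obstruction from the geometric action into a statement about any biautomatic structure is therefore the most delicate part of the argument, and is expected to be where the geodesic-current machinery provides the essential rigidity.
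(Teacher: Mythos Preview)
Your construction of $\Gamma$ and the verification that it is a torsion-free irreducible uniform lattice are essentially the paper's, and your HHG outline is fine (the paper simply cites a general result). Your non-residual-finiteness sketch is different---the paper observes $b_1(\Gamma)\geq 1$ and invokes a general irreducibility result---but either route is workable.

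The substantive gap is in the biautomaticity argument. You propose to exhibit a single element of $\Gamma$ (such as $t$ or $[t,\lambda]$) with irrational stable translation length in the word metric, invoking Gersten--Short rationality. But you have no mechanism linking word-metric translation length in $\Gamma$ to anything on $\RH^2$: elements like $t$ act hyperbolically on the tree, and for elements of $\Lambda$ you have no a priori control over their $\Gamma$-translation length. Your proposed invariant $i(\mu_\Lambda,g_*\mu_\Lambda)$ does not compute any such translation length, and the ``compensating contribution from the tree factor'' you flag is not a technicality but the whole problem.

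The paper's route is different in a way you have not anticipated. Given a hypothetical biautomatic structure $(B,\calm)$ on $\Gamma$, one first shows that the vertex group $\Lambda$ is $\calm$-quasiconvex (this is nontrivial: it uses that the kernel of $\Gamma\to\PSL_2(\RR)$ is a non-abelian free group, and realises a finite-index subgroup of $\Lambda$ as a centraliser of two non-commuting elements of that kernel). Quasiconvexity yields an induced biautomatic structure $(A,\call)$ on $\Lambda$, and the stable $\call$-length function $\tau_\call\colon\Lambda\to\QQ$ is then the object of study. Conjugacy-invariance of stable length in $\Gamma$ forces $\tau_\call$ to be invariant under the commensuration by $t$. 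One shows $\tau_\call$ satisfies Mart\'inez-Granado--Thurston quasi-smoothing, hence extends continuously and positively-linearly to $\ocurrents{\Sigma}$; the $t$-invariance together with density of $\langle\Lambda,t\rangle$ (here the choice of $t$ \emph{elliptic} matters) then forces $\tau_\call$ to be a scalar multiple of hyperbolic length. The contradiction is therefore about \emph{ratios}: one exhibits two elements of $\Lambda$ whose hyperbolic translation lengths are not rational multiples of each other, contradicting $\tau_\call(\Lambda)\subset\QQ$. No single ``irrational translation length in $\Gamma$'' is ever produced.
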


The group we construct is a ``hyperbolic'' analogue of the groups introduced by Leary--Minasyan in \cite{LearyMinasyan2019}.  Indeed, $\Gamma$ is an HNN-extension of an arithmetic surface where the stable letter commensurates the surface whilst acting as an infinite order elliptic isometry of the hyperbolic plane $\RH^2$.  That the action is by isometries allows us to deduce that $\Gamma$ is a $\CAT(0)$ lattice acting freely cocompactly on the product $\RH^2\times\calt_{24}$, where $\calt_{24}$ is the Bass--Serre tree.  Note that we adopt the lattice viewpoint so we may use results of \cite{Hughes2021a}.  From here we apply \cite[Corollary~3.3]{Hughes2021b} to deduce $\Gamma$ is an HHG.  

Our strategy to show that $\Gamma$ is not biautomatic is very different to Leary--Minasyan's work (for example $\Gamma$ is neither constructed from nor contains a Leary--Minasyan group).  Instead of studying the boundary of a biautomatic structure, we develop a new method to show the failure of biautomaticity. In particular, we use deep work of Mart\'inez-Granado and Thurston on extending functions to the space of geodesic currents of a hyperbolic surface \cite{MartinezGranado2020,MartinezGranadoThurston2020}.

The question of whether every automatic group is biautomatic first appeared in \cite[Question~2.5.6]{EpsteinEtAl} and \cite[Remark~6.19]{GerstenShort1991}.  We do not know if the group $\Gamma$ is automatic.  In spite of this we can still deduce an amusing consequence.

\begin{corx}
At least one of the following statements is false:
\begin{enumerate}
    \item Every HHG is automatic.\label{corx.lol.1}
    \item Every automatic group is biautomatic. \label{corx.lol.2}
\end{enumerate}
\end{corx}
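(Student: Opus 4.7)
The plan is a direct proof by contradiction, with no real machinery required beyond the statement of Theorem~A itself. Suppose, for contradiction, that both (\ref{corx.lol.1}) and (\ref{corx.lol.2}) hold. Let $\Gamma$ be the group constructed in Theorem~A. Then $\Gamma$ is an HHG, so by (\ref{corx.lol.1}) it is automatic, and hence by (\ref{corx.lol.2}) it is biautomatic. This contradicts the final clause of Theorem~A, which asserts that $\Gamma$ is \emph{not} biautomatic. Therefore at least one of (\ref{corx.lol.1}) and (\ref{corx.lol.2}) must fail.

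There is essentially no obstacle in this argument: the corollary is nothing more than a syllogism applied to the single witness $\Gamma$ produced in Theorem~A. The only subtlety worth mentioning is that the ``amusing'' content of the corollary lies precisely in the fact that, even though we do not know whether $\Gamma$ itself is automatic, the logical structure of the two statements forces a contradiction without that information. Equivalently, one can phrase the proof as observing that $\Gamma$ simultaneously witnesses the implication ``HHG $\Rightarrow$ automatic $\Rightarrow$ biautomatic'' failing at some link in the chain, without needing to identify which link.
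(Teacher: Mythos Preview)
Your proof is correct and is exactly the intended argument: the paper does not even write out a proof of this corollary, treating it as an immediate syllogism from Theorem~A, which is precisely what you have done.
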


Note that the analogous statement with ``$\CAT(0)$ group'' instead of ``HHG'' follows from the work of Leary and Minasyan \cite{LearyMinasyan2019}.  However, since $\Gamma$ is $\CAT(0)$ it can be deduced here too.

\begin{question}
Is the group $\Gamma$ automatic?
\end{question}

Recall that the \emph{flat-rank} of a $\CAT(0)$ group $\Gamma$ (acting on $X$), denoted $\mathrm{flat}$-$\mathrm{rank}(\Gamma)$, is the maximal rank of an isometrically embedded Euclidean space in $X$.

In \cite[Question~43]{FarbHruskaThomas2011} it was asked if every group acting geometrically on a piece-wise Euclidean $\CAT(0)$ $2$-complex group is biautomatic. (See \cite{FarbHruskaThomas2011} and \cite{MunroOsajdaPrzyycki2021} for recent progress.)
One may hope to relax the hypothesis ``$2$-dimensional piece-wise Euclidean $\CAT(0)$'' to ``flat-rank $2$ $\CAT(0)$''.  Indeed, all previous examples of $\CAT(0)$ but not biautomatic groups have had flat-rank at least $3$.  The next corollary, which follows from the Flat Torus Theorem, shows that one cannot.  

\begin{corx}
There exists a $\CAT(0)$ group $\Gamma$ with $\mathrm{flat}$-$\mathrm{rank}(\Gamma)=2$, that is not biautomatic.
\end{corx}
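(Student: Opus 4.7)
The group $\Gamma$ from Theorem~A acts geometrically on $X := \RH^2 \times \calt_{24}$, hence is a $\CAT(0)$ group, and is not biautomatic by Theorem~A. So it suffices to verify that $\mathrm{flat}$-$\mathrm{rank}(\Gamma) = 2$ with respect to this action.

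For the upper bound, I would invoke the standard splitting of isometric Euclidean subspaces of a $\CAT(0)$ product: any isometric embedding $\EE^n \hookrightarrow Y_1 \times Y_2$ into such a product decomposes orthogonally as $\EE^{n_1} \times \EE^{n_2}$ with $\EE^{n_i} \hookrightarrow Y_i$ and $n = n_1 + n_2$. Since $\RH^2$ and $\calt_{24}$ are both Gromov hyperbolic, neither admits an isometric copy of $\EE^2$, so $n_1, n_2 \leq 1$ and hence $n \leq 2$.

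For the lower bound I appeal to the Flat Torus Theorem in its rank-one form (every hyperbolic isometry of a $\CAT(0)$ space has an axis). Pick any hyperbolic element $s \in S \leq \Gamma$ (where $S$ is the surface-group vertex group of the defining HNN-extension); its translation axis $\ell_s \subset \RH^2$ is isometric to $\RR$. The stable letter $t$ acts as a hyperbolic isometry on the Bass--Serre tree $\calt_{24}$, with axis $\ell_t$ also isometric to $\RR$. The product $\ell_s \times \ell_t \subset X$ is then an isometric copy of $\EE^2$, yielding $\mathrm{flat}$-$\mathrm{rank}(\Gamma) \geq 2$.

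The only genuinely non-trivial ingredient is the product-splitting of Euclidean subspaces in a $\CAT(0)$ product of Gromov-hyperbolic factors, which is a standard fact in $\CAT(0)$ geometry; everything else is essentially immediate from the construction of $\Gamma$ in Theorem~A.
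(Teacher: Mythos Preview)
Your overall strategy matches the paper's (the paper gives no detailed argument, only the remark that the corollary ``follows from the Flat Torus Theorem''), and your lower bound is correct and pleasantly direct: the product $\ell_s \times \ell_t$ of a geodesic in $\RH^2$ with a geodesic in $\calt_{24}$ is a genuine isometric $\EE^2$, with no need even to check that $s$ and $t$ generate a $\ZZ^2$.

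There is, however, a small inaccuracy in your upper-bound step. The statement that an isometric $\EE^n \hookrightarrow Y_1 \times Y_2$ \emph{decomposes orthogonally} as $\EE^{n_1} \times \EE^{n_2}$ with each $\EE^{n_i}$ mapping isometrically into $Y_i$ is false as stated: the diagonal geodesic $t \mapsto (t/\sqrt{2},\,t/\sqrt{2})$ in $\RR \times \RR$ is an isometric $\EE^1$ that does not split this way (its projections to the factors are contractions, not isometries). The correct fact---which still yields your conclusion---is that the two coordinate projections $\pi_i|_{\EE^n}$ are affine maps whose images are flats $F_i \subseteq Y_i$ of dimensions $k_i$ satisfying $k_1 + k_2 \geq n$; one sees this by checking that the pulled-back pseudometrics $d_{Y_i}(\pi_i(\cdot),\pi_i(\cdot))$ each satisfy the CAT(0) inequality and sum (in squares) to the Euclidean metric, forcing equality in each and hence a semi-definite quadratic form. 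Since $\RH^2$ and $\calt_{24}$ are both Gromov-hyperbolic, $k_1,k_2 \leq 1$, so $n \leq 2$ as you want. With that correction your argument goes through.
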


In \cite{HuangPrytula2020} the authors introduce a property regarding commensurators of abelian subgroups, \emph{Condition (C)}, and show that its failure is closely related to Leary--Minasyan groups \cite[Proposition~8.4]{HuangPrytula2020}.  A natural question would be to ask whether the failure of Condition (C) for a $\CAT(0)$ group is equivalent to the failure of biautomaticity.  However, by \cite[Theorem~1.3]{HuangPrytula2020}, the group $\Gamma$ has Condition (C) and fails to be biautomatic.

In \cite[Theorem~7.3]{BehrstockHagenSisto2019} it is shown that HHGs are coarse median spaces as introduced by Bowditch \cite{Bowditch2013}.  We say a group is a \emph{coarse median group} if it acts geometrically on a coarse median space and the coarse median operator is equivariant up to bounded error.  In \cite[Remark~3.14]{PetytThesis} it is shown that HHGs are coarse median groups.  We remark that $\Gamma$ appears to be the first example of a coarse median group of type $\mathsf{F}$ which is not biautomatic.

The group $\Gamma$ also appears as an example highlighting the difference between discrete and non-discrete versions of ``injective'' metric spaces. We say that a geodesic metric space (respectively a graph) $X$ is \emph{injective} (respectively \emph{Helly}) if the collection of all metric balls in $X$ satisfies the Helly property. Injective metric spaces and Helly graphs, as well as groups acting on them geometrically---\emph{injective groups} and \emph{Helly groups}, respectively---have been extensively studied \cite{Isbell1964,Dress1984,Lang2013,DescombesLang2016,BandeltChepoi2008,ChalopinChepoiGenevoisHiraiOsajda2020,HuangOsajda2021}. The following result gives a negative answer to the question in \cite[Page~4]{Haettel2021}.


\begin{corx}
There exists a group $\Gamma$ which is injective but not Helly and not biautomatic.
\end{corx}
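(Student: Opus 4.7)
The plan is to verify each of the three required properties of $\Gamma$---injectivity, failure of Hellyness, and failure of biautomaticity---by combining \Cref{thmx.main} with results already cited in the introduction.

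First, to show that $\Gamma$ is injective, I would appeal to \cite{HaettelHodaPetyt2021}. That paper is referenced in the introduction only for the conclusion that every HHG is semihyperbolic, but semihyperbolicity there is proved via the stronger statement that any HHG acts geometrically on an injective metric space. Since \Cref{thmx.main} gives that $\Gamma$ is an HHG, this immediately yields that $\Gamma$ is injective.

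Second, to show that $\Gamma$ is not Helly, I would invoke the result of Chalopin, Chepoi, Genevois, Hirai and Osajda \cite{ChalopinChepoiGenevoisHiraiOsajda2020}, which asserts that every Helly group is biautomatic. Since \Cref{thmx.main} provides that $\Gamma$ is not biautomatic, the contrapositive gives that $\Gamma$ cannot act geometrically on any Helly graph, i.e.\ is not Helly. The non-biautomaticity assertion itself is of course immediate from \Cref{thmx.main}.

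In short, the corollary is a combination of \Cref{thmx.main} with \cite{HaettelHodaPetyt2021} and \cite{ChalopinChepoiGenevoisHiraiOsajda2020}, so no new argument is required. The only delicate point---and the closest thing to an obstacle---is to extract the existence of a \emph{geometric} action on a genuine injective metric space from \cite{HaettelHodaPetyt2021}; this is present in that paper but is not the headline statement, so one should cite the precise proposition where the injective space is constructed rather than the semihyperbolicity corollary.
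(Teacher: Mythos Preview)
Your proposal is correct. The ``not biautomatic'' and ``not Helly'' parts are identical to the paper's argument: \Cref{thmx.main} gives non-biautomaticity, and \cite[Theorem~1.5(1)]{ChalopinChepoiGenevoisHiraiOsajda2020} (Helly $\Rightarrow$ biautomatic) then rules out Hellyness.

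For injectivity, however, the paper takes a different and more concrete route than you do. Rather than invoking the general machinery of \cite{HaettelHodaPetyt2021} to conclude that any HHG is injective, the authors build the injective space by hand: they use that the injective hull $E(\Htwo)$ is proper and at finite Hausdorff distance from $\Htwo$ \cite[Proposition~4.6]{Haettel2021}, that trees are injective, and that $\ell_\infty$-products of injective spaces are injective, so the geometric action of $\Gamma$ on $\Htwo \times_\infty \calt_{24}$ extends to a geometric action on the proper injective space $E(\Htwo) \times_\infty \calt_{24}$. Your approach is cleaner as a citation exercise and avoids any discussion of injective hulls of specific spaces; the paper's approach, on the other hand, produces an explicit injective model for $\Gamma$ and does not rely on the full strength of the Haettel--Hoda--Petyt construction (in particular, it uses \cite{Haettel2021} rather than \cite{HaettelHodaPetyt2021} for this step). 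Either argument suffices.
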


\begin{proof}
It follows from \Cref{thmx.main} that $\Gamma$ is not biautomatic. Moreover, Helly groups are biautomatic \cite[Theorem~1.5(1)]{ChalopinChepoiGenevoisHiraiOsajda2020}, and so $\Gamma$ is not Helly. On the other hand, for every metric space $X$ there exists a ``smallest'' injective metric space $E(X)$, called the \emph{injective hull} of $X$, into which $X$ embeds isometrically, so that a group action on $X$ extends to an action on $E(X)$ \cite{Isbell1964}. It is known that $E(\Htwo)$ is proper and finite Hausdorff distance away from the image of $\Htwo \hookrightarrow E(\Htwo)$ \cite[Proposition~4.6]{Haettel2021}; it also follows from the definitions that (real) trees are injective and that the $\ell_\infty$ product $X \times_\infty Y$ of injective spaces $X$ and $Y$ is injective. Therefore, the geometric action of $\Gamma$ on $\Htwo \times_\infty \calt_{24}$ extends to a geometric action on the proper injective metric space $E(\Htwo) \times_\infty \calt_{24}$, and so $\Gamma$ is injective, as required.
\end{proof}

On the other hand, one may replace the Helly property with a coarse Helly property to study the classes of \emph{coarsely injective} and \emph{coarsely Helly} graphs and groups. Coarsely injective and coarsely Helly groups have been studied in \cite{HaettelHodaPetyt2021,ChalopinChepoiGenevoisHiraiOsajda2020,OsajdaValiunas2020}; in particular, it has been shown that all HHGs are coarsely injective \cite[Corollary~H]{HaettelHodaPetyt2021}. It is currently unknown if all coarsely Helly groups are biautomatic, or even if they all are Helly.
\begin{question}
Is $\Gamma$ coarsely Helly?
\end{question}
It has been communicated to us by Alexander Engel and Damian Osajda that they have shown certain mapping class groups are not Helly. Such groups are HHGs and therefore coarsely injective.


It is a well known open problem whether $S$-arithmetic lattices are biautomatic.  Indeed, this is a special case of \cite[Problem~34]{McCammond2007} in McCammonds list (after the American Institute of Mathematics meeting ‘Problems in Geometric Group Theory’ April 23–27, 2007).  It would be extremely interesting to adapt the methods here to apply to a uniform $S$-arithmetic lattice in $\PSL_2(\RR)\times\PSL_2(\QQ_p)$.  The main issue is showing that vertex stabilisers in the action on the Bruhat--Tits tree $\calt_{p+1}$ are quasi-convex with respect to any biautomatic structure on the lattice.  Note that since such a lattice is residually finite, so if this strategy can be implemented successfully, one would also get a negative answer to \cite[Question~12.4]{LearyMinasyan2019}.

We end with a broad conjecture which would vastly generalise our work here.  The reader is directed to \Cref{sec:prelim} for definitions.

\begin{conjecture}
Let $H$ be a semi-simple real Lie group with trivial centre and no compact factors.  Let $T$ be the automorphism group of a locally-finite unimodular leafless tree.  Suppose $T$ is non-discrete.  If $\Gamma$ is an irreducible non-residually finite uniform $(H\times T)$-lattice, then $\Gamma$ is not biautomatic.
\end{conjecture}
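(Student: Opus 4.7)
The approach would be to mirror the strategy used to establish \Cref{thmx.main} at this more general level. The projection $\Gamma\to T$ is non-discrete (since $\Gamma$ is irreducible), and composing with the embedding of $T$ into the automorphism group of its Bass--Serre tree $\calt$ one obtains a cocompact minimal action of $\Gamma$ on $\calt$ with infinite stabilisers. This yields a finite graph-of-groups decomposition of $\Gamma$ in which vertex and edge groups are uniform lattices $K<H$.

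Since uniform lattices in semi-simple real Lie groups with trivial centre and no compact factors are residually finite---by Malcev in the torsion-free linear case, or by Margulis arithmeticity in higher rank together with Selberg's lemma---the non-residual finiteness of $\Gamma$ must arise from how edge lattices sit inside vertex lattices. Concretely, at least one edge inclusion has to realise an \emph{exotic} commensuration of some $K<H$, i.e.\ one not implemented by an element of the normaliser of $K$ in $H$. The corresponding stable letter $t\in\Gamma$ in the induced HNN-extension factor then commensurates $K$ in a non-inner way.

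Assume towards a contradiction that $\Gamma$ is biautomatic. The first step is to verify that the vertex lattices $K$ are quasi-convex with respect to any biautomatic structure; a biautomatic bicombing would then impose uniform control on how translation lengths in the symmetric space of $H$ of elements of $K$ transform under conjugation by $t$. The next step is to extend this control from the countable set $K$ to a larger $t$-equivariant completion on which translation length varies continuously. For $H=\PSL_2(\RR)$ the correct completion is the space of geodesic currents, and the required extension of the length function is a deep result of Mart\'inez-Granado--Thurston \cite{MartinezGranado2020,MartinezGranadoThurston2020}. Once such an extension is available, a rigidity statement---to the effect that any commensuration of $K$ preserving all translation lengths in $H$ is realised by an element of $H$---forces the commensuration by $t$ to be inner, contradicting non-residual finiteness.

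The principal obstacle is the construction of the correct substitute for the space of geodesic currents in the general setting. For rank-one $H$, one might hope to use Patterson--Sullivan theory together with an associated current space on the corresponding rank-one symmetric space. In higher rank, one is led to consider Parreau-style compactifications of the space of Weyl-chamber-valued length functions on $K$, together with an accompanying rigidity result for commensurations preserving these data. Proving the necessary extension and rigidity in a form strong enough to interact with the biautomatic bicombing, rather than with the $\CAT(0)$ geometry, is likely to be the hardest step, and the one requiring genuinely new ideas beyond those introduced in the present paper.
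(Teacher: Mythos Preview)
This statement is a \emph{conjecture} in the paper, not a theorem: the paper offers no proof, and your proposal is best read as a research programme rather than a proof attempt. You yourself acknowledge this in the final paragraph.

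That said, your outline of the endgame diverges from how the paper actually obtains its contradiction in the special case $H=\PSL_2(\RR)$. You suggest that once the stable-length function extends to a current-like completion, a rigidity statement would force the commensuration by $t$ to be realised by an element of $H$, contradicting non-residual finiteness. This is not the mechanism in the paper. Instead, the paper shows (\Cref{prop:stable-len}) that the stable $\call$-word length takes only \emph{rational} values on any biautomatic group; then $t$-invariance together with density of $\langle G,t\rangle$ in $\PSL_2(\RR)$ force the extended $\tau_\call$ to coincide, up to a scalar, with the hyperbolic translation-length function (\Cref{prop.criteria}). The contradiction is then purely arithmetic: one exhibits two elements of $G$ whose translation lengths are not rational multiples of each other (\Cref{lem:trlen-irrational}). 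There is no step where the commensuration is shown to be inner---indeed, in the explicit example the stable letter already projects to an element of $\PSL_2(\RR)$, so that particular dichotomy is vacuous.

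Your identification of a current-space analogue as the main missing ingredient for general $H$ is in the spirit of the paper. But note that the quasi-convexity of vertex groups (\Cref{prop:quasiconvex}) in the paper relies on the kernel of $\Gamma\to H$ containing non-commuting elements whose centraliser pins down a vertex stabiliser up to finite index; extending this argument to arbitrary semi-simple $H$ is a separate obstacle you pass over, and one the paper itself does not address.
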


\subsection{Outline of the paper}
In \Cref{sec:prelim} we revise the necessary background on lattices, biautomatic structures, geodesic currents on a hyperbolic surface, and the intersection form.  The remainder of the article is then dedicated to proving \Cref{thmx.main}.

The strategy of the proof of \Cref{thmx.main} is as follows.  We first assume that $\Gamma$ has a biautomatic structure $(B,\calm)$ and consider a biautomatic structure $(A,\call)$ induced by $(B,\calm)$ on a quasi-convex subgroup $G$; here $G$ is a vertex stabiliser in the action of $\Gamma$ on~$\calt$.  The group $G$ acts freely cocompactly on a copy of $\RH^2$ and so can be identified with a subgroup of $\PSL_2(\RR)$, giving rise to a Riemann surface $\Sigma = G \backslash \PSL_2(\RR)$.  The next step is to show that stable word length function $\tau_\call\colon G\to\RR$ with respect to $(A,\call)$ takes only rational values and extends over the space of geodesic currents of $\Sigma$.  Now, the translation length function of $G$ also extends to the space of geodesic currents of $\Sigma$.  Moreover, using the density of the projection of $\Gamma$ to $\PSL_2(\RR)$ we show that both functions agree.  Now, the translation length function takes values which are not rational multiples of each other.  This is a contradiction, and so $\Gamma$ cannot be biautomatic.

In \Cref{sec:stable-lengths} we study stable word length $\tau_\call$ on a biautomatic structure $(A,\call)$ as a function from $G\to\RR$.  The key results, \Cref{prop:stable-len,lem:translation-lengths-same}, imply that for a hyperbolic group $G$ the function takes rational values.

In \Cref{sec:quasi-smoothing} we show that the function $\tau_\call$, viewed as a function on the homotopy classes of closed curves on $\Sigma$, satisfies a technical property known as ``quasi-smoothing'' (see \Cref{prop:quasi-smoothing}). This allows us to extend $\tau_\call$ continuously to the space of geodesic currents of $\Sigma$.

In \Cref{sec:invariant} we complete our study of functions on geodesic currents. The key result, \Cref{prop.criteria}, is that if $t$ is an elliptic isometry of $\Htwo$ commensurating $G$ such that $\langle G,t \rangle$ is dense in $\PSL_2(\RR)$, and if a continuous function $F$ on the space of geodesic currents of $\Sigma$ is in a sense ``$t$-invariant'', then $F(\gamma)$ is a constant multiple of the length of the geodesic representative of $\gamma$, where $\gamma$ is a closed curve on $\Sigma$. In the remaining sections, we construct a lattice $\Gamma < \PSL_2(\RR) \times T$ that will allow us to apply this result for $F = \tau_\call$.

In \Cref{sec:lattices} we study properties of irreducible uniform lattices in $\PSL_2(\RR)\times T$ for sufficiently general trees.  In particular, for a non-residually finite lattice we prove that projection to $\PSL_2(\RR)$ is dense (\Cref{lem:dense}) and that a vertex stabiliser of the action on the tree $\calt$ is quasi-convex with respect to any biautomatic structure (\Cref{prop:quasiconvex}).

In \Cref{sec:example} we construct $\Gamma$; an explicit example of a non-residually finite irreducible uniform lattice in $\PSL_2(\RR)\times T_{24}$ as an HNN-extension.  The key tool is the arithmetic of quaternion algebras which allow us to ensure the stable letter acts on $\RH^2$ as an infinite order elliptic isometry that commensurates the vertex group.  We show that the translation lengths on $\RH^2$ of some elements of a vertex stabiliser in the tree are not rational multiples of each other (\Cref{lem:trlen-irrational}).

In \Cref{sec:main} we prove \Cref{thmx.main}.  In the appendix (\Cref{app:pres}) we detail a presentation of $\Gamma$.

\subsection*{Acknowledgements}
The authors would like to thank Ian Leary and Ashot Minasyan to whom we both owe a great intellectual debt and whose work has been a continued source of inspiration.  The authors are grateful to Thomas Haettel, Mark Hagen and Ashot Minasyan for their comments on an earlier version of this paper.  The first author would once again like to thank Ian Leary for his tireless, kind, and often humorous encouragement during my PhD.  The first author was supported by the Engineering and Physical Sciences Research Council grant number 2127970 and the European Research Council (ERC) under the European Union’s Horizon 2020 research and innovation programme (Grant agreement No. 850930).  Finally, we would like to thank the anonymous referee for their helpful comments.

\section{Preliminaries}
\label{sec:prelim}

\subsection{Lattices and graphs of groups}
Our example will be constructed as a lattice in $\PSL_2(\RR)\times T_{24}$.  To this end we record some definitions and results we will use in \Cref{sec:lattices} and \Cref{sec:example}.

\begin{defn}
Let $H$ be a locally compact topological group with right invariant Haar measure $\mu$.  A discrete subgroup $\Gamma\leq H$ is a \emph{lattice} if the covolume $\mu(H/\Gamma)$ is finite.  A lattice is \emph{uniform} if $H/\Gamma$ is compact and \emph{non-uniform} otherwise.  Let $S$ be a right $H$-set such that for all $s\in S$, the stabilisers $H_s$ are compact and open, then if $\Gamma\leq H$ is discrete the stabilisers in the action of $\Gamma$ on $S$ are finite.
\end{defn}

Let $X$ be a locally finite, connected, simply connected simplicial complex. The group $H=\Aut(X)$ of simplicial automorphisms of $X$ naturally has the structure of a locally compact topological group, where the topology is given by uniform convergence on compacta.

Note that $T$ the automorphism group of a locally-finite tree $\calt$ admits lattices if and only if the group $T$ is unimodular (that is the left and right Haar measures coincide).  In this case we say $\calt$ is \emph{unimodular}.  We say a tree $\calt$ is \emph{leafless} if it has no vertices of valence one.

Two notions of irreducibility for a lattice will feature in this paper. 

\begin{defn}
Let $\calt$ be a locally-finite unimodular leafless tree not isometric to $\RR$ and let $T=\Aut(\calt)$ be non-discrete and cocompact.  Let $\Gamma$ be a uniform $(\PSL_2(\RR)\times T)$-lattice.  We say that $\Gamma$ is \emph{weakly irreducible} if one (and hence both---see \cite[Proposition~3.4]{Hughes2021a}) of the images of the projections $\pi_{\PSL_2(\RR)}:\Gamma\to\PSL_2(\RR)$ and $\pi_T:\Gamma\to T$ are non-discrete.  We say $\Gamma$ is \emph{algebraically irreducible} if there is no finite index subgroup $\Gamma_1\times \Gamma_2$ of $\Gamma$ with $\Gamma_1$ and $\Gamma_2$ infinite.  By \cite[Theorem~4.2]{CapraceMonod2009b}, the two notions of irreducibility are equivalent for a $(\PSL_2(\RR)\times T)$-lattice $\Gamma$.  So if $\Gamma$ is either (and hence both) weakly or algebraically irreducible we will simply state that $\Gamma$ is \emph{irreducible}.
\end{defn}

To construct and study lattices in product with a tree we will utilise the \emph{graph of lattices} construction from \cite{Hughes2021a}.  Before we do this we will define graphs of groups following Bass \cite{Bass1993}.

\begin{defn}
A \emph{graph of groups} $(A,\cala)$ consists of a connected graph $A$ together with some extra data $\cala=(V\cala,E\cala,\Phi\cala)$.  This data consists of \emph{vertex groups} $A_v\in V\cala$ for each vertex $v$, \emph{edge groups} $A_e = A_{\overline{e}}\in E\cala$ for each (oriented) edge $e$, and monomorphisms $(\alpha_{e}:A_e \rightarrow A_{\iota(e)})\in\Phi\cala$ for every oriented edge in $A$.  We will often refer to the vertex and edge groups as \emph{local groups} and the monomorphisms as \emph{structure maps}.
\end{defn}

\begin{defn}
The \emph{path group} $\pi(\cala)$ has generators the vertex groups $A_v$ and elements $t_e$ for each edge $e\in EA$ along with the relations:
\[
    \left\{\begin{array}{c} 
        \text{The relations in the groups }A_v,\\
 t_{\overline{e}}=t_{e}^{-1},\\
 t_e\alpha_{\overline{e}}(g)t_e^{-1}=\alpha_e(g) \text{ for all } e\in EA \text{ and } g\in A_e=A_{\overline{e}}.
     \end{array}\right\}
\]
\end{defn}

\begin{defn}We will often abuse notation and write $\cala$ for a graph of groups.  The \emph{fundamental group of a graph of groups} can be defined in two ways.  Firstly, considering reduced loops based at a vertex $v$ in the graph of groups, in this case the fundamental group is denoted $\pi_1(\cala,v)$ (see \cite[Definition~1.15]{Bass1993}).  Secondly, with respect to a maximal or spanning tree of the graph.  Let $X$ be a spanning tree for $A$, we define $\pi_1(\cala,X)$ to be the group generated by the vertex groups $A_v$ and elements $t_e$ for each edge $e\in EA$ with the relations:
\[
    \left\{\begin{array}{c} 
    \text{The relations in the groups }A_v,\\
    t_{\overline{e}}=t_e^{-1}\text{ for each (oriented) edge }e,\\
    t_e\alpha_{\overline{e}}(g)t_e^{-1}=\alpha_{e}(g)\text{ for all }g\in A_e,\\
    t_e=1\text{ if }e\text{ is an edge in }X.
    \end{array}\right\}
\]
Note that the definitions are independent of the choice of basepoint $v$ and spanning tree $X$ and both definitions yield isomorphic groups so we can talk about \emph{the fundamental group} of $\cala$, denoted $\pi_1(\cala)$.
\end{defn}

We say a group $G$ is \emph{covirtually} isomorphic to $H$ if there exists a finite normal subgroup $N\trianglelefteq G$ such that $G/N\cong H$.  We are now ready to define a graph of $\PSL_2(\RR)$-lattices.

\begin{defn}\label{def.gol}
A \emph{graph of $\PSL_2(\RR)$-lattices} $(A,\cala,\psi)$ is a graph of groups $(A,\cala)$ that is equipped with a morphism of graphs of groups $\psi\colon\cala\to \PSL_2(\RR)$ such that:
\begin{enumerate}
    \item Each local group $A_\sigma\in\cala$ is covirtually a $\PSL_2(\RR)$-lattice and the image $\psi(A_\sigma)$ is a $\PSL_2(\RR)$-lattice;
    \item The local groups are commensurable in $\Gamma=\pi_1(\cala)$ and their images are commensurable in $\PSL_2(\RR)$;
    \item For each $e\in EA$ the element $t_e$ of the path group $\pi(\cala)$ is mapped under $\psi$ to an element of $\Comm_{\PSL_2(\RR)}(\psi_e(A_e))$.
\end{enumerate}
\end{defn}

The relevance of a graph of $\PSL_2(\RR)$-lattices is the following special case of \cite[Theorem~A]{Hughes2021a}.

\begin{thm}\emph{\cite[Theorem~A]{Hughes2021a}}\label{thm.gol}
Let $(A,\cala,\psi)$ be a finite graph of $\PSL_2(\RR)$-lattices with locally-finite unimodular non-discrete Bass-Serre tree $\calt$, and fundamental group $\Gamma$.  Suppose $T=\Aut(\calt)$ admits a uniform lattice.
If each local group $A_\sigma$ is covirtually a uniform $\PSL_2(\RR)$-lattice, and the kernel $\Ker(\psi|_{A_\sigma})$ acts faithfully on $\calt$, then $\Gamma$ is a uniform $(\PSL_2(\RR)\times T)$-lattice and hence a $\CAT(0)$ group.  Conversely, if $\Lambda$ is a uniform $(\PSL_2(\RR)\times T)$-lattice, then $\Lambda$ splits as a finite graph of uniform $\PSL_2(\RR)$-lattices with Bass-Serre tree $\calt$.
\end{thm}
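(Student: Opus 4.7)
The plan is to exhibit the natural homomorphism $\rho\colon\Gamma\to\PSL_2(\RR)\times T$ defined by $g\mapsto(\psi(g),\pi_T(g))$, where $\pi_T$ records the Bass--Serre action on $\calt$, and then verify the three lattice conditions in turn. For injectivity, any $g\in\Ker(\rho)$ fixes $\calt$ pointwise, so by Bass--Serre theory is conjugate into some local group $A_\sigma$, say $g=hah^{-1}$ with $a\in A_\sigma$. Then $\psi(a)=1$ puts $a$ in $\Ker(\psi|_{A_\sigma})$, and since $g$ acts trivially on $\calt$ so does $a$; faithfulness of the kernel on $\calt$ forces $a=1$. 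Discreteness of $\rho(\Gamma)$ follows by essentially the same argument: a sequence $\rho(g_n)\to1$ eventually stabilises arbitrarily large balls in $\calt$, hence lies in a vertex stabiliser that Bass--Serre identifies with some $A_\sigma$; since $\psi(A_\sigma)$ is already a lattice in $\PSL_2(\RR)$, eventually $\psi(g_n)=1$, placing $g_n$ in $\Ker(\psi|_{A_\sigma})$ and stabilising a huge ball, so $g_n=1$ by faithfulness.

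Cocompactness, and with it the $\CAT(0)$ conclusion, comes from pushing $\rho$ through the diagonal action on $\RH^2\times\calt$: this is a proper cocompact isometric action on a $\CAT(0)$ space, where cocompactness is assembled from the finite underlying graph $A$ together with a fundamental domain for each $\psi(A_\sigma)$ acting on $\RH^2$ (using the uniform-lattice hypothesis). Unimodularity of $\calt$ and the assumption that $T$ admits a uniform lattice ensure one may convert this geometric action into the statement that $\rho(\Gamma)$ is a uniform lattice in $\PSL_2(\RR)\times T$ via the standard Haar-measure argument.

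For the converse, given a uniform lattice $\Lambda<\PSL_2(\RR)\times T$, the $T$-action on $\calt$ restricts to a cocompact action of $\Lambda$, so $A=\Lambda\backslash\calt$ is finite and Bass--Serre theory produces a graph-of-groups splitting with vertex and edge groups the corresponding stabilisers $\Lambda_\sigma$. For a vertex $v$, the $T$-stabiliser $T_v$ is compact open, and cocompactness of $\Lambda$ implies that the projection $\psi(\Lambda_v):=\pi_{\PSL_2(\RR)}(\Lambda_v)$ is a uniform lattice in $\PSL_2(\RR)$, with kernel $\Lambda_v\cap(\{1\}\times T_v)$ finite (hence $\Lambda_v$ is covirtually a uniform $\PSL_2(\RR)$-lattice). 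Commensurability of all local groups inside $\Lambda$ and of their $\PSL_2(\RR)$-images, together with the commensurator condition on stable letters, is then automatic because all data live inside the single ambient lattice $\Lambda$ acting with finite-index intersections of stabilisers along adjacent vertices.

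The main obstacle is bookkeeping: carefully extracting from discreteness/cocompactness of $\Lambda$ in $\PSL_2(\RR)\times T$ that the projected vertex groups are \emph{uniform} lattices (not merely lattices), and conversely, in the forward direction, ensuring that the covirtual nature of the local groups and the finiteness of the kernel $\Lambda_v \cap (\{1\}\times T_v)$ are handled correctly so that neither step loses the uniformity. Once these measure-theoretic points are in hand, the two directions of the equivalence are essentially a translation between the Bass--Serre/graph-of-groups language and the lattice language.
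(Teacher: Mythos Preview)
The paper does not prove this statement: it is quoted as \cite[Theorem~A]{Hughes2021a} and used as a black box, so there is no proof here to compare against. Your outline is the natural one and is essentially how such results are established.

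There is one genuine soft spot in your sketch, in the discreteness step. You argue that once $g_n$ lies in $\Ker(\psi|_{A_\sigma})$ and fixes a large ball in $\calt$, faithfulness forces $g_n=1$. Faithfulness only says that a nontrivial element of the kernel does not fix \emph{all} of $\calt$; it does not by itself prevent an infinite sequence of nontrivial kernel elements from fixing larger and larger balls. What you actually need is that $K:=\Ker(\psi|_{A_\sigma})$ is \emph{finite}: then each nontrivial element of $K$ moves some vertex in a fixed finite ball, and your argument goes through. Equivalently, $\rho(K)\subseteq\{1\}\times T_{v}$ sits inside a compact group, so discreteness of $\rho(\Gamma)$ forces $\rho(K)$, and hence $K$, to be finite---but of course that is circular if discreteness is what you are proving. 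The hypotheses as literally stated (``$A_\sigma$ is covirtually a uniform lattice'' and ``$\psi(A_\sigma)$ is a lattice'') do not obviously force $K$ finite: for instance, a genus-$3$ surface group surjects onto a genus-$2$ surface group with infinite kernel. The intended reading is almost certainly that the covirtual structure is \emph{witnessed by} $\psi$, i.e.\ that $K$ is the finite normal subgroup in the definition of ``covirtually''; under that reading your proof is correct. You flag something like this in your final paragraph, but it is worth making explicit that this is where the content lies, not merely bookkeeping.
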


\subsection{Biautomatic structures}

We are interested in studying when a group $G$ is biautomatic; we briefly introduce the necessary definitions and basic results on biautomaticity below, and refer the interested reader to \cite{EpsteinEtAl} for a more comprehensive account.

We remark that the nowadays standard definition of a biautomatic structure that we give below differs from \cite[Definition~2.5.4]{EpsteinEtAl} (see \cite{Amrhein2021} for an explanation).  However, for finite-to-one structures these definitions are equivalent \cite[Theorem~6]{Amrhein2021}.

Let $G$ be a group with a finite generating set $A$. Formally, we view $A$ as a finite set together with a function $\pi_A^0\colon A \to G$ that extends to a surjective monoid homomorphism $\pi_A\colon A^* \to G$, where $A^*$ is the free monoid on $A$; we say that a word $v \in A^*$ \emph{labels} the element $\pi_A(v) \in G$. For simplicity, we will assume that $A$ is symmetric ($\pi_A(A) = \pi_A(A)^{-1}$) and contains the identity ($\pi_A(1) = 1_G$ for an element $1 \in A$). We denote by $d_A$ the combinatorial metric on the Cayley graph $\cay(G,A)$ of $G$.

We study (combinatorial) paths in $\cay(G,A)$. Given a path $p$ in $\cay(G,A)$ and an integer $t \in \{ 0, \ldots, |p| \}$, where $|p|$ is the length of $p$, we denote by $\widehat{p}(t) \in G$ the $t$-th vertex of $p$, so that $\widehat{p}(0)$ and $\widehat{p}(|p|)$ are the starting and ending vertices of $p$, respectively. We further define $\widehat{p}(t) \in G$ for any $t \in \ZZ_{\geq 0} \cup \{\infty\}$ by setting $\widehat{p}(t) = \widehat{p}(|p|)$ whenever $t > |p|$.

\begin{defn} \label{defn:biauto}
Let $G$ be a group with a finite symmetric generating set $A$ containing the identity, and let $\call \subseteq A^*$. We say $(A,\call)$ is a (\emph{uniformly finite-to-one}) \emph{biautomatic structure} on $G$ if
\begin{enumerate}[label=(\roman*)]
    \item $\call$ is recognised by a finite state automaton over $A$;
    \item there exists $N \geq 1$ such that $1 \leq |\pi_A^{-1}(g) \cap \call| \leq N$ for every $g \in G$; and
    \item \label{it:biauto-ft} $\mathcal{L}$ satisfies the ``two-sided fellow traveller property'': there exists a constant $\zeta \geq 1$ such that if $p$ and $q$ are paths in $\cay(G,A)$ labelled by words in $\call$ and satisfying $d_A(\widehat{p}(0),\widehat{q}(0)) \leq 1$ and $d_A(\widehat{p}(\infty),\widehat{q}(\infty)) \leq 1$, then $d_A(\widehat{p}(t),\widehat{q}(t)) \leq \zeta$ for all $t$.
\end{enumerate}
We say $G$ is \emph{biautomatic} if it has some uniformly finite-to-one biautomatic structure.
\end{defn}

The standard notion of a biautomatic structure appearing in the literature (cf \cite{EpsteinEtAl}) is more general than the notion of a uniformly finite-to-one biautomatic structure as defined here. Nevertheless, it can be shown that every biautomatic group (in the sense of \cite{EpsteinEtAl}, for instance) has a uniformly finite-to-one biautomatic structure \cite[Theorem~2.5.1]{EpsteinEtAl} and so is biautomatic in our sense as well. In this paper, we assume all biautomatic structures to be uniformly finite-to-one.

We record the following result for future reference; for part~\ref{it:biauto-consts-states}, it is enough to take $\nu$ to be larger than the number of states in an automaton over $A$ recognising $\call$.
\begin{thm}[D.~B.~A.~Epstein et al.\ {\cite[Lemma~2.3.9 \& Theorem~3.3.4]{EpsteinEtAl}}] \label{thm:biauto-consts}
Let $(A,\call)$ be a biautomatic structure on a group $G$. Then there exists a constant $\nu \geq 1$ with the following properties:
\begin{enumerate}[label=\textup{(\roman*)}]
    \item \label{it:biauto-consts-states} if $v \in A^*$ is a subword of a word $w \in \call$, then there exist $u_1,u_2 \in A^*$ such that $|u_1|,|u_2| \leq \nu$ and $u_1vu_2 \in \call$, and if $v$ is a prefix (respectively suffix) of $w$, then we can take $u_1 = 1$ (respectively $u_2 = 1$);
    \item if $v,w \in \mathcal{L}$ are such that $\pi_A(v) = \pi_A(wa)$ or $\pi_A(v) = \pi_A(aw)$ for some $a \in A$, then $\big| |v| - |w| \big| \leq \nu$; and
    \item any path in $\cay(G,A)$ labelled by a word in $\call$ is a $(\nu,\nu)$-quasi-geodesic.
\end{enumerate}
\end{thm}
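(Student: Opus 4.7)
The plan is to prove the three parts in the order (i), (iii), (ii) and take $\nu$ to be the maximum of the three resulting constants. For (i), let $M$ be a finite-state automaton over $A$ recognising $\call$, with state set $Q$. A standard reachability analysis shows that every state of $M$ reachable from the start state $q_0$ is reached by some word of length at most $|Q|$, and every co-reachable state reaches an accepting state by some word of length at most $|Q|$. Given a subword $v$ of $w = w_1 v w_2 \in \call$, the state $q_1$ reached after reading $w_1$ is reachable from $q_0$, and the state $q_2$ reached after reading $w_1 v$ is co-reachable (witnessed by $w_2$); choosing short witnesses $u_1, u_2 \in A^*$ of length at most $|Q|$ then yields $u_1 v u_2 \in \call$. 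The prefix and suffix cases permit $u_1 = 1$ (since then $q_1 = q_0$) or $u_2 = 1$ (since then $q_2$ is accepting) directly.

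For part (iii), fix $w \in \call$ labelling a path $p$, take $i < j$, and set $R = d_A(\widehat{p}(i), \widehat{p}(j))$. By (i), the subword $w_{i+1} \cdots w_j$ extends to some $w' \in \call$ of length at most $j - i + 2\nu_1$ (with $\nu_1$ the constant from (i)) whose image has word length at most $R + 2\nu_1$. The uniformly finite-to-one hypothesis then bounds the number of $\call$-words mapping into the ball of radius $R + 2\nu_1$ around the identity, hence bounds their lengths by some function $\phi(R)$, giving the a priori bound $j - i \leq \phi(R)$. The key and most delicate step is to upgrade $\phi$ to a linear function: I would iterate two-sided fellow travelling along a geodesic from $1$ to $\pi_A(w')$ in $\cay(G, A)$, producing short $\call$-representatives of each element in the ball; combined with uniform finite-to-one this yields a linear bound, and hence that $p$ is a $(\nu_2, \nu_2)$-quasi-geodesic.

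Finally for (ii), if $\pi_A(v) = \pi_A(wa)$ or $\pi_A(v) = \pi_A(aw)$ with $v, w \in \call$ and $a \in A$, then the paths $p, q$ labelled $v, w$ starting at the identity have coincident start vertices and endpoints at distance at most $1$, so two-sided fellow travelling yields $d_A(\widehat{p}(t), \widehat{q}(t)) \leq \zeta$ for all $t$. Without loss of generality $|v| \geq |w|$; then for $t \in [|w|, |v|]$ we have $\widehat{q}(t) = \pi_A(w)$, forcing $\widehat{p}(t)$ to lie in the ball of radius $\zeta$ around $\pi_A(w)$. Applying the $(\nu_2, \nu_2)$-quasi-geodesic bound from (iii) to $p$ restricted to this set of times, its diameter is at most $\nu_2(2\zeta + \nu_2)$, giving $|v| - |w| \leq \nu_2(2\zeta + \nu_2)$. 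The main technical obstacle is the upgrade in (iii) from the finite bound $\phi(R)$ to a linear one; once that is in hand, the arguments for (i) and (ii) go through cleanly, and taking $\nu$ to be the maximum of the three constants completes the proof.
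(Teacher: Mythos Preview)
The paper does not give its own proof of this theorem: it is stated with a citation to \cite[Lemma~2.3.9 \& Theorem~3.3.4]{EpsteinEtAl}, together with the single remark that for part~(i) it suffices to take $\nu$ larger than the number of states in an automaton recognising $\call$. Your treatment of part~(i) is correct and matches this remark exactly.

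Your argument for (ii) assuming (iii) is fine. The difficulty is entirely in your treatment of (iii), and here there is a genuine gap. You obtain a function $\phi(R)$ bounding $|w'|$ whenever $\pi_A(w')$ lies in the $R$-ball, which is legitimate since a finite set of words has a maximal length. The problem is the upgrade to a linear $\phi$. Your sketch (``iterate two-sided fellow travelling along a geodesic \ldots\ combined with uniform finite-to-one'') is not yet an argument: fellow travelling between $\call$-representatives $w_i$ of the points $g_i$ along a geodesic tells you that $\widehat{p_i}(t)$ and $\widehat{p_{i+1}}(t)$ stay $\zeta$-close, but it does \emph{not} by itself bound $\big||w_i|-|w_{i+1}|\big|$. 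To extract a length bound you need an automaton/pumping argument: if $|w_i| - |w_{i+1}|$ exceeds the number of pairs (state of the $\call$-automaton, element of the $\zeta$-ball), then a subword of $w_i$ representing $1_G$ can be pumped to produce infinitely many $\call$-words for $\pi_A(w_i)$, contradicting uniform finite-to-one. But notice that this argument is exactly a proof of (ii); so your ordering (iii) before (ii) is circular, or at best proves (ii) as an unacknowledged lemma inside (iii).

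The standard route in \cite{EpsteinEtAl}, and the cleaner one, reverses your order: prove (ii) first via the pumping argument just described (this is the content of \cite[Lemma~2.3.9]{EpsteinEtAl}), and then (iii) follows easily. Indeed, given $w \in \call$ with $|\pi_A(w)|_A = R$, pick $\call$-representatives $w_0,\ldots,w_R$ along a geodesic; part~(ii) gives $|w_R| \leq |w_0| + R\nu$, and applying (ii) once more (with $a=1$) between $w$ and $w_R$ bounds $|w|$ linearly in $R$. Combining with part~(i) then gives the quasi-geodesic estimate for arbitrary subwords. Once you reorganise in this way, your overall plan goes through.
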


The following notion will be crucial in our arguments.
\begin{defn} \label{defn:qconvex}
Let $(A,\call)$ be a biautomatic structure on a group $G$, and let $H \subseteq G$. We say that $H$ is \emph{$\call$-quasiconvex} if there exists a constant $\xi \geq 1$ such that every path in $\cay(G,A)$ starting and ending at vertices of $H$ and labelled by a word in $\call$ belongs to the $\xi$-neighbourhood of $H$.
\end{defn}

The importance of the notion of $\call$-quasiconvexity can be summarised in the following result. It can be extracted from the proofs of \cite[Theorem~3.1 \& Proposition~4.3]{GerstenShort1991} and from \cite[Lemma~2.3.9]{EpsteinEtAl}.
\begin{thm}[S.~Gersten and H.~Short; D.~B.~A.~Epstein et al.] \label{thm:qconvex}
Let $(B,\calm)$ be a biautomatic structure on a group $G$.
\begin{enumerate}[label=\textup{(\roman*)}]
    \item \label{it:qconvex-centra} For any $g_1,\ldots,g_n \in G$, the centraliser $C_G(\{g_1,\ldots,g_n\})$ is $\calm$-quasiconvex.
    \item \label{it:qconvex-biauto} Let $H \leq G$ be an $\calm$-quasiconvex subgroup. Then there exists a biautomatic structure $(A,\call)$ on $H$ and a constant $\kappa \geq 1$ such that if $v \in \calm$ and $w \in \call$ represent the same element of $G$, then $\big| |v| - |w| \big| \leq \kappa$.
\end{enumerate}
\end{thm}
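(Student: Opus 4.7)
The two parts can be attacked independently, though both rely on the two-sided fellow traveller property of $(B,\calm)$.

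For part~(i), fix $g \in Z := C_G(g_1,\ldots,g_n)$ with representative $w \in \calm$, and for each $i$ fix $u_i \in \calm$ with $\pi_B(u_i) = g_i g$. Since $g_i g = g g_i$, the paths $w$ and $u_i$ based at $1$ have coincident starts and endpoints at distance $\leq |g_i|_B$, while the paths $u_i$ based at $1$ and $w$ based at $g_i$ have the opposite configuration. Two applications of the fellow traveller property (extended to endpoints at bounded rather than unit distance, which follows by padding with short words) yield a constant $C$ with $d_B(\widehat{w}(t), g_i \widehat{w}(t)) \leq C$ for all $t$ and $i$, so each conjugate $\widehat{w}(t)^{-1} g_i \widehat{w}(t)$ has $B$-length at most $C$. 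The set
\[
    E := \bigl\{x \in G \;\big|\; |x^{-1} g_i x|_B \leq C \text{ for all } i\bigr\}
\]
is a finite union of right cosets of $Z$: two elements inducing the same conjugation tuple $(g_1^x,\ldots,g_n^x)$ differ by an element of $Z$, and only finitely many such tuples exist (each entry has bounded $B$-length). Hence every $\widehat{w}(t)$ lies in a uniformly bounded neighbourhood of $Z$, proving $\calm$-quasiconvexity.

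For part~(ii), let $\xi$ be a quasiconvexity constant and put $A := \{h \in H : 1 \leq |h|_B \leq 2\xi+1\}$, which is finite and generates $H$. Given $v \in \calm$ with $\pi_B(v) = h \in H$, set $g_t := \widehat{v}(t)$ and, fixing a total order on $G$, let $h_t$ be the lexicographically smallest closest point to $g_t$ in $H$, with $h_0 = 1$ and $h_{|v|} = h$. Then $d_B(h_t, h_{t+1}) \leq 2\xi+1$, so $h_t^{-1} h_{t+1} \in A \cup \{1\}$; concatenating and deleting trivial letters gives $\psi(v) \in A^*$ of length $\leq |v|$ representing $h$. Set $\call := \psi\bigl(\calm \cap \pi_B^{-1}(H)\bigr)$. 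Regularity of $\call$ is proved by building a finite-state automaton that simulates the one for $\calm$ while storing the offset $g_t^{-1} h_t$ in the finite $B$-ball of radius $\xi$ about $1$. The two-sided fellow traveller property transfers from $\calm$, since the projection $g_t \mapsto h_t$ distorts $B$-distance by at most $2\xi$ and $A$-distance is Lipschitz-comparable to $B$-distance on $H$. Uniform finite-to-oneness is inherited because each $h \in H$ has at most $N$ preimages in $\calm$ under $\pi_B$, so it has at most $N$ preimages in $\call$ under $\pi_A$.

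The delicate point, and in my view the main obstacle, is upgrading the resulting bi-Lipschitz comparison of $|v|$ and $|w|$ into the \emph{additive} bound $\bigl||v|-|w|\bigr| \leq \kappa$. By Theorem~\ref{thm:biauto-consts}(ii) applied to $\call$ it suffices to compare $|\psi(v)|$ with $|v|$; the inequality $|\psi(v)| \leq |v|$ is immediate, but the reverse requires that $h_t$ not remain stationary for long stretches as $t$ advances. Arranging this --- for instance by enlarging $A$ to include compressed syllables so that each $\call$-letter corresponds to a uniformly bounded number of $\calm$-letters in an essentially injective fashion, using the quasi-geodesicity of $\calm$-paths from Theorem~\ref{thm:biauto-consts}(iii) to rule out long stationary intervals --- while simultaneously preserving regularity, the fellow traveller property, and uniform finite-to-oneness, is the combinatorial heart of the argument.
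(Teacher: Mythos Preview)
The paper does not supply its own proof of this result; it is quoted from Gersten--Short and Epstein et al.\ (see the sentence preceding the theorem), so there is no in-paper argument to compare against.

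Your argument for part~(i) is correct and is the standard one.

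For part~(ii) your construction is also the standard one, but the ``main obstacle'' you identify is self-inflicted. The conventions of Definition~\ref{defn:biauto} require the generating set to contain the identity. If you include $1_H$ in $A$ and \emph{do not} delete the trivial letters from $\psi(v)$, then $|\psi(v)| = |v|$ on the nose. The additive bound is then immediate: given $v \in \calm$ and $w \in \call$ both representing $h \in H$, write $w = \psi(v')$ for some $v' \in \calm$ also representing $h$; then $|w| = |v'|$, while $\bigl||v|-|v'|\bigr| \leq \nu$ by Theorem~\ref{thm:biauto-consts}(ii) applied with $a = 1 \in B$. Thus $\kappa = \nu$ works, and none of the combinatorics you outline (compressed syllables, ruling out long stationary intervals via quasi-geodesicity) is needed. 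A smaller point you do gloss over: for the transducer computing $\psi$ to have finite state, the canonical nearest point $h_{t+1}$ should be selected via a fixed preference on the offset $g_{t+1}^{-1}h_{t+1}$ inside the $\xi$-ball of $G$, rather than via a global order on $G$; with that adjustment the update is genuinely local, since $g_{t+1}y \in H$ if and only if $o_t^{-1}by \in H$, and membership in $H$ of the finitely many short elements $o_t^{-1}by$ can be hard-coded into the automaton.
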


A biautomatic structure $(A,\call)$ on $H \leq G$ appearing in \Cref{thm:qconvex}\ref{it:qconvex-biauto} will be called a biautomatic structure \emph{associated} to $(B,\calm)$.

Finally, we record the following observation.

\begin{lemma} \label{lem:qconvex-fi}
Let $(A,\call)$ be a biautomatic structure on a group $G$, let $H_1 \leq H_2 \leq G$, and suppose that $[H_2:H_1] < \infty$. Then $H_1$ is $\call$-quasiconvex if and only if $H_2$ is $\call$-quasiconvex.
\end{lemma}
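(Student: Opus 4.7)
The plan is to observe that $H_1$ and $H_2$ lie at finite Hausdorff distance in $\cay(G,A)$ and then to use the two-sided fellow-traveller property to compare paths labelled by $\call$ between the two subgroups.  First, I would fix right coset representatives $x_1 = 1, x_2, \ldots, x_n \in H_2$ for $H_1 \leq H_2$ and set $R := \max_{1 \leq i \leq n} d_A(1, x_i)$.  Every $g \in H_2$ can then be written $g = h x_i$ with $h \in H_1$, giving $d_A(g,h) \leq R$, so that $H_2 \subseteq N_R(H_1)$, while trivially $H_1 \subseteq H_2$.

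The implication ``$H_2$ is $\call$-quasiconvex $\Longrightarrow$ $H_1$ is $\call$-quasiconvex'' is then essentially immediate:  if $H_2$ is $\call$-quasiconvex with constant $\xi$, then any path labelled by a word in $\call$ whose endpoints lie in $H_1 \subseteq H_2$ is contained in $N_\xi(H_2) \subseteq N_{\xi+R}(H_1)$, so $H_1$ is $\call$-quasiconvex with constant $\xi+R$.

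For the converse, suppose $H_1$ is $\call$-quasiconvex with constant $\xi$, let $\zeta$ be a fellow-traveller constant for $\call$, and let $p$ be a path labelled by some $w \in \call$ with endpoints in $H_2$.  Since left multiplication by any element of $H_2$ is an isometry of $\cay(G,A)$ preserving $H_2$ setwise and preserving labels, we may assume $p$ starts at $1$ and ends at $g := \pi_A(w) = h x_i \in H_2$ for some $h \in H_1$.  Pick any $w' \in \call$ with $\pi_A(w') = h$ and let $p'$ be the path labelled by $w'$ starting at $1$.  The main step is to show that $p$ and $p'$ $R\zeta$-fellow-travel.  This I would do by iterating the two-sided fellow-traveller property of \Cref{defn:biauto} along a geodesic $g = z_0, z_1, \ldots, z_k = h$ from $g$ to $h$ in $\cay(G,A)$ (with $k \leq R$):  for each $j$, choose $w_j \in \call$ representing $z_j$ with $w_0 := w$ and $w_k := w'$, and apply the fellow-traveller property to each consecutive pair $(w_{j-1}, w_j)$ of paths based at $1$, whose endpoints lie at distance $1$.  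Summing yields $d_A(\hat p(t), \hat p'(t)) \leq R\zeta$ for every $t$.  Applying the quasiconvexity hypothesis to $p'$, whose endpoints $1$ and $h$ both lie in $H_1$, gives $p' \subseteq N_\xi(H_1)$, whence $p \subseteq N_{\xi+R\zeta}(H_1) \subseteq N_{\xi+R\zeta}(H_2)$.

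The only subtle point is that the two-sided fellow-traveller property is stated for pairs of paths whose endpoints lie within distance $1$ rather than within distance $R$;  bridging this gap by interpolating along a geodesic and choosing words in $\call$ representing each intermediate vertex is the key technical device, but presents no serious obstacle.
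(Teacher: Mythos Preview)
Your proof is correct and follows essentially the same approach as the paper's: both arguments exploit the finite Hausdorff distance between $H_1$ and $H_2$ together with the two-sided fellow-traveller property to compare an $\call$-path with endpoints in $H_2$ to one with endpoints in $H_1$. The only cosmetic differences are that you first translate so the path starts at $1 \in H_1$ (so only one endpoint needs adjusting) and you spell out the interpolation along a geodesic explicitly, whereas the paper simply asserts the resulting $\lambda\zeta$-fellow-travelling bound.
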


\begin{proof}
Note that since $[H_2:H_1] < \infty$, there exists a constant $\lambda \geq 1$ such that $H_2$ belongs to the $\lambda$-neighbourhood of $H_1$ in $\cay(G,A)$. Moreover, let $\zeta \geq 1$ be the constant appearing in \Cref{defn:biauto}\ref{it:biauto-ft}.

Suppose first that $H_1$ is $\call$-quasiconvex, and let $\xi_1 \geq 1$ be the constant appearing in \Cref{defn:qconvex}. Let $p_2$ be a path in $\cay(G,A)$ labelled by a word in $\call$ with $\widehat{p_2}(0),\widehat{p_2}(\infty) \in H_2$. Since $H_2$ belongs to the $\lambda$-neighbourhood of $H_1$, there exist $g_-,g_+ \in H_1$ such that $d_A(g_-,\widehat{p_2}(0)) \leq \lambda$ and $d_A(g_+,\widehat{p_2}(\infty)) \leq \lambda$; moreover, since $\pi_A|_\call$ is surjective, there exists a path $p_1$ in $\cay(G,A)$ labelled by a word in $\call$, starting at $g_-$ and ending at $g_+$. It then follows that $p_2$ is in the $\lambda\zeta$-neighbourhood of $p_1$, and $p_1$ is in the $\xi_1$-neighbourhood of $H_1$. Therefore, $p_2$ is in the $(\lambda\zeta+\xi_1)$-neighbourhood of $H_1$, and so of $H_2$; it follows that $H_2$ is $\call$-quasiconvex, as required.

Conversely, suppose that $H_2$ is $\call$-quasiconvex, and let $\xi_2 \geq 1$ be the constant appearing in \Cref{defn:qconvex}. Then any path in $\cay(G,A)$ labelled by a word in $\call$ and with endpoints in $H_1$ belongs to the $\xi_2$-neighbourhood of $H_2$, and so to the $(\xi_2+\lambda)$-neighbourhood of $H_1$. It follows that $H_1$ is $\call$-quasiconvex, as required.
\end{proof}

\subsection{Geodesic currents}

We now fix a closed orientable Riemannian surface $\Sigma$ of constant curvature $-1$, and let $G = \pi_1(\Sigma)$. We also fix the universal covering map $\widetilde\Sigma \to \Sigma$ and the $G$-action by isometries on $\widetilde\Sigma$. Let $\ogeod{\widetilde\Sigma}$ be the set of oriented (i.e.\ directed) geodesic lines on $\widetilde\Sigma$. Since each such geodesic line is uniquely determined by its endpoints on $\partial\widetilde\Sigma \cong \Sone$, we can topologise $\ogeod{\widetilde\Sigma}$ by identifying it with the open cylinder $\{ (x,y) \in \Sone \times \Sone \mid x \neq y \}$. Note that the $G$-action on $\widetilde\Sigma$ induces an action of $G$ on $\ogeod{\widetilde\Sigma}$.

By an (\emph{oriented}) \emph{curve} on $\Sigma$ we mean a free homotopy class of essential continuous maps $\Sone \to \Sigma$. We denote by $\ocurves{\Sigma}$ the set of all curves on $\Sigma$, which can also be identified with the set of non-trivial $G$-conjugacy classes. Given a primitive curve $\gamma \in \ocurves{\Sigma}$ (meaning that $\gamma \neq \eta^n$ for any $\eta \in \ocurves{\Sigma}$ and $n \geq 2$), we may associate a Borel measure $\mu_\gamma$ on $\ogeod{\widetilde\Sigma}$ as follows: let $\widehat\gamma\colon \Sone \to \Sigma$ be the unique (up to reparametrisation of $\Sone$) geodesic representative of $\gamma$, let $A(\gamma) \subset \ogeod{\widetilde\Sigma}$ be the set of all lifts of $\widehat\gamma$, and let $\mu_\gamma(E) := |E \cap A(\gamma)|$ for any Borel subset $E \subseteq \ogeod{\widetilde\Sigma}$. We may also define this when $\gamma$ is not primitive, by setting $\mu_{\eta^n} := n\mu_\eta$ for primitive $\eta \in \ocurves{\Sigma}$ and $n \geq 2$. By construction, $A(\gamma)$, and so $\mu_\gamma$, is $G$-invariant; moreover, one can see that $A(\gamma)$ is discrete in $\ogeod{\Sigma}$, implying that $\mu_\gamma$ is a Radon measure. This motivates the following definition.

\begin{defn} \label{defn:current}
An (\emph{oriented}) \emph{geodesic current} on $\Sigma$ is a $G$-invariant Radon measure on $\ogeod{\widetilde\Sigma}$. The set of all geodesic currents on $\Sigma$ form a topological space $\ocurrents{\Sigma}$ under the weak* topology: we have $\mu_n \to \mu$ in $\ocurrents{\Sigma}$ if and only if $\int f \dd\mu_n \to \int f \dd\mu$ for all continuous functions $f\colon \ogeod{\widetilde\Sigma} \to \RR$ with compact support. By slightly abusing the notation, we will identify a curve $\gamma \in \ocurves{\Sigma}$ with the corresponding geodesic current $\gamma := \mu_\gamma \in \ocurrents{\Sigma}$, and will therefore view $\ocurves{\Sigma}$ as a subset of $\ocurrents{\Sigma}$.
\end{defn}

It is known that a current $\mu$ is uniquely determined by the values of $\int f \dd\mu$ for compactly supported continuous functions $f\colon \ogeod{\widetilde\Sigma} \to \RR$, as a consequence of the following theorem.
\begin{thm}[Riesz Representation Theorem; see {\cite[Theorem~1.7.13]{MartinezGranado2020}}] \label{thm:riesz-rep}
Let $X$ be a locally compact Hausdorff space, and let $C_c(X)$ be the set of continuous functions $f\colon X \to \RR$ with compact support. For any linear functional $F\colon C_c(X) \to \RR$ such that $F(f) \geq 0$ whenever $f(x) \geq 0$ for all $x \in X$, there exists a unique Radon measure $\mu$ on $X$ such that $F(f) = \int f \dd\mu$ for all $f \in C_c(X)$. In particular, if $\mu$ and $\mu'$ are Radon measures on $X$ such that $\int f \dd\mu = \int f \dd\mu'$ for all $f \in C_c(X)$, then $\mu = \mu'$.
\end{thm}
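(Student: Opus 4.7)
The proof splits naturally into uniqueness and existence; I will treat uniqueness first, then existence, as the former is substantially lighter. For uniqueness, suppose $\mu$ and $\mu'$ are Radon measures on $X$ with $\int f \dd\mu = \int f \dd\mu'$ for all $f \in C_c(X)$. The key tool is Urysohn's lemma for locally compact Hausdorff spaces: for any compact $K$ contained in an open $U$, there is $f \in C_c(X)$ with $0 \leq f \leq 1$, $f \equiv 1$ on $K$, and $\mathrm{supp}(f) \subseteq U$. Squeezing the indicator of $K$ between such functions gives $\mu(K) \leq \int f \dd\mu = \int f \dd\mu' \leq \mu'(U)$, and outer regularity of $\mu'$ upgrades this to $\mu(K) \leq \mu'(K)$; symmetry yields equality on compact sets, and then inner regularity on open sets and outer regularity on Borel sets propagate the equality to the whole Borel $\sigma$-algebra.

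For existence, I would construct $\mu$ via a Carath\'eodory-style procedure tailored to $F$. First, for each open $U \subseteq X$ set
$$\mu_*(U) := \sup\{ F(f) : f \in C_c(X),\ 0 \leq f \leq 1,\ \mathrm{supp}(f) \subseteq U\},$$
and extend to arbitrary $E \subseteq X$ by $\mu^*(E) := \inf\{ \mu_*(U) : E \subseteq U \text{ open}\}$. Countable subadditivity of $\mu^*$ rests on a partition-of-unity argument: any $f \in C_c(X)$ supported in $U_1 \cup U_2$ decomposes as $f = f_1 + f_2$ with $\mathrm{supp}(f_i) \subseteq U_i$, whence $F(f) = F(f_1) + F(f_2) \leq \mu_*(U_1) + \mu_*(U_2)$. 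One then verifies that open sets are Carath\'eodory-measurable, so that the restriction $\mu$ of $\mu^*$ to the Borel $\sigma$-algebra is a Borel measure. Finiteness of $\mu$ on compact sets follows because, for $K$ compact, any admissible $f$ in the definition of $\mu_*(U)$ with $U \supseteq K$ satisfies $F(f) \leq F(g)$ for a fixed $g \in C_c(X)$ with $g \geq 1_K$.

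The final and most delicate step is verifying $F(f) = \int f \dd\mu$ for all $f \in C_c(X)$; by linearity and positivity it suffices to treat $0 \leq f \leq 1$ with support in a compact $K$. I would approximate $f$ by layer-cake sums: partition $[0,1]$ as $0 = t_0 < \cdots < t_n = 1$, set $U_j := \{x : f(x) > t_{j-1}\}$, and decompose $f$ using a partition of unity subordinate to the cover $\{U_j\}$. Applying $F$ and integration against $\mu$ termwise, both quantities are trapped between Darboux-type sums $\sum (t_j - t_{j-1}) \mu(K_j)$ and $\sum (t_j - t_{j-1}) \mu(U_j)$ for suitable compacts $K_j \subseteq U_j$ extracted from the definition of $\mu_*$; refining the partition forces equality. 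The main obstacle throughout is the simultaneous verification that $\mu$ is outer regular on all Borel sets and inner regular on open sets, as required for the Radon property at this level of generality; once that bookkeeping is in place, the identification $F \leftrightarrow \mu$ emerges almost tautologically from the definition of $\mu_*$.
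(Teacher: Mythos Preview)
The paper does not prove this statement: it is quoted as a classical result with a reference (the Riesz Representation Theorem, cited from \cite[Theorem~1.7.13]{MartinezGranado2020}) and used as a black box in later arguments. There is therefore no ``paper's own proof'' to compare your proposal against. Your sketch is the standard textbook route (outer measure from $F$, Carath\'eodory measurability, layer-cake approximation to recover $F(f) = \int f\,\mathrm{d}\mu$), and as a sketch it is fine; but for the purposes of this paper no proof is expected or required here.
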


A core part of this paper is based on studying certain functions $f\colon \ocurves{\Sigma} \to \RR$. The terminology we use below roughly follows the terminology of \cite{MartinezGranadoThurston2020} and \cite{MartinezGranado2020}; however, since the functions we consider are assumed to satisfy the additive union property in the sense of \cite[Definition~1.1]{MartinezGranadoThurston2020}, we are able to make some simplifications to the statements of results. Given two maps $\widehat\gamma_1, \widehat\gamma_2 \colon \Sone \to \Sigma$, a \emph{crossing} of $\widehat\gamma_1$ and $\widehat\gamma_2$ is a pair $(x_1,x_2) \in \Sone \times \Sone$ such that $\widehat\gamma_1(x_1) = \widehat\gamma_2(x_2)$, and a \emph{self-crossing} of $\widehat\gamma\colon \Sone \to \Sigma$ is a pair $(x_1,x_2) \in \Sone \times \Sone$ such that $x_1 \neq x_2$ and $\widehat\gamma(x_1) = \widehat\gamma(x_2)$. A crossing or a self-crossing is \emph{essential} if, roughly speaking, it is unavoidable in a homotopy class: see \cite[Definition~2.6 \& Lemma~2.8]{MartinezGranadoThurston2020}.

\begin{defn} \label{defn:quasi-smoothing}
Let $f\colon \ocurves{\Sigma} \to \RR$.
\begin{enumerate}[label=(\roman*)]
    \item We say $f$ is \emph{homogeneous} if $f(\gamma^n) = n f(\gamma)$ for all $\gamma \in \ocurves{\Sigma}$ and $n \geq 1$.
    \item We say $f$ satisfies the \emph{join quasi-smoothing property} if there exists a constant $\zeta \geq 0$ such that the following holds. Let $(x_1,x_2)$ be an essential crossing of maps $\widehat\gamma_1, \widehat\gamma_2 \colon \Sone \to \Sigma$ representing curves $\gamma_1,\gamma_2 \in \ocurves{\Sigma}$, respectively, and let $\gamma \in \ocurves{\Sigma}$ be the homotopy class of a curve obtained by cutting $\widehat\gamma_i$ at $x_i$ and regluing the four resulting endpoints in a way that respects the orientation of the $\widehat\gamma_i$. Then $f(\gamma) \leq f(\gamma_1)+f(\gamma_2)+\zeta$.
    \item We say $f$ satisfies the \emph{split quasi-smoothing property} if there exists a constant $\zeta \geq 0$ such that the following holds. Let $(x_1,x_2)$ be an essential self-crossing of a map $\widehat\gamma \colon \Sone \to \Sigma$ representing a curve $\gamma \in \ocurves{\Sigma}$, and let $\gamma_1,\gamma_2 \in \ocurves{\Sigma}$ be the homotopy classes of the two curves obtained by cutting $\widehat\gamma$ at $x_1$ and $x_2$ and regluing the four resulting endpoints in a way that respects the orientation of $\widehat\gamma$. Then $f(\gamma_1)+f(\gamma_2) \leq f(\gamma)+\zeta$.
\end{enumerate}
\end{defn}

Given a function $f\colon \ocurves{\Sigma} \to \RR$ that satisfies the join and split quasi-smoothing properties, the following result allows us to construct such a function that is also homogeneous.

\begin{thm}[D.~Mart\'inez-Granado and D.~P.~Thurston {\cite[Theorem~B]{MartinezGranadoThurston2020}}] \label{thm:MGT-limit}
Let $f\colon \ocurves{\Sigma} \to \RR$ be a function satisfying the join and split quasi-smoothing properties. Then the function $\overline{f}\colon \ocurves{\Sigma} \to \RR$ defined by $\overline{f}(\gamma) = \lim_{n \to \infty} f(\gamma^n)/n$ is well-defined, homogeneous, and satisfies the join and split quasi-smoothing properties.
\end{thm}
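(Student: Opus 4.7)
The plan is a Fekete-style subadditivity argument applied to the sequence $a_n := f(\gamma^n)$ for fixed $\gamma \in \ocurves{\Sigma}$. The hope is that $a_n$ is approximately additive in the sense $|a_{m+n} - a_m - a_n| \leq \zeta$, where $\zeta$ is the larger of the two quasi-smoothing constants; the existence of the limit $\lim a_n/n$ then follows from a standard variant of Fekete's lemma (apply it to $a_n + \zeta$, which is genuinely subadditive). With convergence in hand, homogeneity is almost a tautology, and quasi-smoothing for $\overline{f}$ will be deduced by combining quasi-smoothing for $f$ with bounds of the form $|f(\gamma) - \overline{f}(\gamma)| \leq \zeta$.

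To establish subadditivity $a_{m+n} \leq a_m + a_n + \zeta$, I would take a geodesic representative $\widehat\gamma$ of $\gamma$ and consider the parametrisations $\widehat\gamma_m, \widehat\gamma_n \colon \Sone \to \Sigma$ obtained by precomposing $\widehat\gamma$ with the degree-$m$ and degree-$n$ covering maps $\Sone \to \Sone$. Because the two images coincide, one locates an essential crossing of $\widehat\gamma_m$ and $\widehat\gamma_n$ and checks that the join surgery there produces a loop tracing $\widehat\gamma$ a total of $m+n$ times, so its homotopy class is $\gamma^{m+n}$; the join quasi-smoothing property of $f$ then gives $a_{m+n} \leq a_m + a_n + \zeta$. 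For the reverse inequality, observe that the parametrised curve $\widehat\gamma_{m+n}$ passes through each point of its image $m+n$ times and thus carries an essential self-crossing between its $m$-th and $(m{+}1)$-th passage; splitting there cuts the loop into one piece tracing $\widehat\gamma$ $m$ times and another tracing it $n$ times, whose classes are $\gamma^m$ and $\gamma^n$, and the split property yields $a_m + a_n \leq a_{m+n} + \zeta$.

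From this approximate additivity, not only does $a_n/n$ converge to a finite limit $\overline{f}(\gamma)$, but iterating the two inequalities gives $|f(\gamma^n) - n f(\gamma)| \leq (n-1)\zeta$, so $|\overline{f}(\gamma) - f(\gamma)| \leq \zeta$. Homogeneity is then immediate:
\[
\overline{f}(\gamma^k) = \lim_{n \to \infty} \frac{f(\gamma^{kn})}{n} = k \lim_{n \to \infty} \frac{f(\gamma^{kn})}{kn} = k\, \overline{f}(\gamma).
\]
To see the join quasi-smoothing for $\overline{f}$, if $\gamma$ is a join of $\gamma_1, \gamma_2$, then
\[
\overline{f}(\gamma) \leq f(\gamma) + \zeta \leq f(\gamma_1) + f(\gamma_2) + 2\zeta \leq \overline{f}(\gamma_1) + \overline{f}(\gamma_2) + 4\zeta,
\]
using quasi-smoothing of $f$ in the middle step and the $f$-versus-$\overline{f}$ bound twice; the split case is entirely symmetric.

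The step I expect to be genuinely delicate is the verification that the crossings and self-crossings constructed in the second paragraph are \emph{essential} in the technical sense of Mart\'inez-Granado--Thurston. The maps $\widehat\gamma_m$ and $\widehat\gamma_n$ share their image as an unparametrised geodesic, so the point where they meet is a tangency rather than a transverse crossing; one must appeal to the parametrised (or lifted) notion of essentiality in \cite{MartinezGranadoThurston2020} to see that this really is an essential crossing whose join surgery recovers $\gamma^{m+n}$ in free-homotopy classes. A careful lift to $\widetilde\Sigma$, identifying the chosen meeting point with a specific $G$-orbit representative, is what pins down precisely which pair $(\gamma^m,\gamma^n)$ emerges from the split surgery.
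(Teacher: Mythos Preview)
The paper does not prove this statement: \Cref{thm:MGT-limit} is quoted verbatim from \cite[Theorem~B]{MartinezGranadoThurston2020} and used as a black box, with no proof or sketch given in the present paper. There is therefore nothing to compare your proposal against here.

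That said, your outline is the natural Fekete-type argument and is essentially the strategy used in the cited source. Your own caveat is the right one: the crossings you manufacture between $\widehat\gamma_m$ and $\widehat\gamma_n$ (and the self-crossing of $\widehat\gamma_{m+n}$) are tangential rather than transverse, since all these maps share the same geodesic image. Whether such tangencies count as \emph{essential} crossings in the sense of \cite[Definition~2.6]{MartinezGranadoThurston2020}, and whether the surgery there really returns $\gamma^{m+n}$ (respectively $\gamma^m$ and $\gamma^n$), is exactly the technical point that needs the machinery of that paper; it is not something you can read off from the abbreviated Definition~\ref{defn:quasi-smoothing} given here. If you want a self-contained argument you will have to unpack their notion of essential crossing for non-transverse configurations.
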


The main motivation for these definitions arises from the following result that is crucial in our argument.

\begin{thm}[D.~Mart\'inez-Granado and D.~P.~Thurston {\cite[Theorem~A]{MartinezGranadoThurston2020}}] \label{thm:MGT-main}
Let $f\colon \ocurves{\Sigma} \to \RR$ be a homogeneous function satisfying the join and split quasi-smoothing properties. Then $f$ extends to a unique continuous homogeneous function $f\colon \ocurrents{\Sigma} \to \RR$.
\end{thm}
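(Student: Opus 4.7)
The plan is to establish the extension through a density-continuity argument, leveraging a classical theorem of Bonahon that weighted multicurves are dense in $\ocurrents{\Sigma}$ with respect to the weak* topology. Uniqueness is immediate: since the continuous homogeneous extension must agree with $f$ on the dense subset of (weighted) curves, any two continuous homogeneous extensions must coincide. The entire difficulty lies in proving existence.

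First, I would extend $f$ linearly to the set $\ocurves{\Sigma}_\QQ$ of positive rational linear combinations of curves by setting $f(\sum_i \lambda_i \gamma_i) = \sum_i \lambda_i f(\gamma_i)$; this is well-defined because $f$ is homogeneous, and one may clear denominators before summing. The key technical step is to establish a quantitative continuity estimate of the following form: there exists a ``filling'' current $\eta \in \ocurrents{\Sigma}$ (for instance, coming from a filling pair of curves, so that $i(\eta, \cdot)$ is a proper function on $\ocurrents{\Sigma}$) and a constant $C > 0$ such that, for weighted multicurves $\mu, \mu' \in \ocurves{\Sigma}_\QQ$,
\[ |f(\mu) - f(\mu')| \leq C \cdot \big( i(\eta, \mu) + i(\eta, \mu') \big)^{1/2} \cdot \sqrt{W(\mu, \mu')}, \]
where $W(\mu,\mu')$ is some suitable notion of weak* distance. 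Since $i(\eta, \cdot)$ is continuous, such an estimate propagates uniform continuity from bounded subsets of $\ocurves{\Sigma}_\QQ$ to all of $\ocurrents{\Sigma}$ after normalising by homogeneity.

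The heart of the argument, and what I expect to be the main obstacle, is establishing this estimate from the join and split quasi-smoothing properties alone. The strategy would be to connect $\mu$ and $\mu'$ by a finite sequence of elementary moves: take a common geodesic representative, find all essential (self-)crossings, and transform $\mu$ into $\mu'$ by successive joins and splits. By \Cref{defn:quasi-smoothing}, each join move changes $f$ by at most $\zeta$ (plus something controlled by how $f$ behaves on the resulting component), and each split by at most $\zeta$ in the opposite direction. Thus the total change $|f(\mu) - f(\mu')|$ is bounded by $\zeta$ times the number of required surgeries, which in turn can be estimated by the intersection data $i(\eta, \mu)$, $i(\eta, \mu')$. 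Replacing $\mu$ and $\mu'$ with $\mu^n$ and $(\mu')^n$, dividing by $n$, and using homogeneity, the additive error $\zeta$ becomes negligible compared to the linear growth of the surgery count, yielding the desired estimate.

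Finally, with uniform continuity on bounded subsets of the dense set $\ocurves{\Sigma}_\QQ$ established, the extension of $f$ to $\ocurrents{\Sigma}$ is constructed in the standard way: for $\mu \in \ocurrents{\Sigma}$, pick any sequence $\mu_n \in \ocurves{\Sigma}_\QQ$ with $\mu_n \to \mu$ (which exists by Bonahon), and define $f(\mu) := \lim_n f(\mu_n)$. The continuity estimate ensures this limit exists and is independent of the approximating sequence, and homogeneity on $\ocurves{\Sigma}_\QQ$ passes to the limit. Continuity of the extension on all of $\ocurrents{\Sigma}$ then follows by a standard three-$\epsilon$ argument combining the density of $\ocurves{\Sigma}_\QQ$ with the local uniform continuity bound.
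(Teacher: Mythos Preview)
The paper does not prove this theorem: it is quoted verbatim from \cite[Theorem~A]{MartinezGranadoThurston2020} and used as a black box, so there is no ``paper's own proof'' to compare against. Your proposal is therefore an attempt to reconstruct the external argument rather than a comparison target.

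As a sketch of that external argument, the high-level architecture (extend linearly to weighted multicurves, prove a uniform continuity estimate on bounded pieces, invoke Bonahon's density) is correct in spirit, but the core step is not right as written. You propose to pass from $\mu$ to $\mu'$ by a chain of join/split surgeries and to bound the number of such surgeries by $i(\eta,\mu)+i(\eta,\mu')$. Neither part works: join and split moves do not let you interpolate between two \emph{arbitrary} weighted multicurves---they only relate a curve to its smoothings at its own essential crossings---so there is in general no such chain; and even if there were, the number of moves would have to measure how \emph{different} $\mu$ and $\mu'$ are, not merely how large each is, so a bound depending only on $i(\eta,\mu)+i(\eta,\mu')$ cannot yield convergence when $\mu_n\to\mu$ with all $i(\eta,\mu_n)$ bounded. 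The displayed inequality with the square roots is likewise unmotivated and does not follow from the surgery count you describe. In the actual Mart\'inez-Granado--Thurston argument the continuity estimate is obtained quite differently, by comparing $f$ to a convex-length-type functional and controlling the error via train-track or lamination approximations; the quasi-smoothing constant enters as an additive defect that is killed by homogeneity, but the geometric input replacing your ``surgery chain'' is substantially more delicate.
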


As a consequence of \Cref{thm:MGT-limit,thm:MGT-main}, if a function $f\colon \ocurves{\Sigma} \to \RR$ satisfies the join and split quasi-smoothing properties, then $\overline{f}\colon \ocurves{\Sigma} \to \RR$ extends to a unique continuous homogeneous function $\overline{f}\colon \ocurrents{\Sigma} \to \RR$.

Another property we will use is ``positive linearity''. We say a function $f\colon \ocurrents{\Sigma} \to \RR$ is \emph{positively linear} if $f(c_1\mu_1+c_2\mu_2) = c_1f(\mu_1) + c_2f(\mu_2)$ for all $c_1,c_2 \geq 0$ and $\mu_1,\mu_2 \in \ocurrents{\Sigma}$.

\begin{lemma} \label{lem:poslin}
Let $f\colon \ocurves{\Sigma} \to \RR$ be a homogeneous function satisfying the join and split quasi-smoothing properties. Then the function $f\colon \ocurrents{\Sigma} \to \RR$ given by \Cref{thm:MGT-main} is positively linear.
\end{lemma}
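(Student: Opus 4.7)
The plan is to reduce positive linearity to additivity on pairs of primitive curves, and then exploit the join and split quasi-smoothing properties. First I would upgrade the homogeneity of $f$ on $\ocurves{\Sigma}$ to positive homogeneity of the extension on $\ocurrents{\Sigma}$, i.e.\ $f(c\mu) = cf(\mu)$ for all $c \geq 0$ and $\mu \in \ocurrents{\Sigma}$. For a curve $\gamma$ and $n \in \NN$, the identity $\mu_{\gamma^n} = n\mu_\gamma$ combined with $f(\gamma^n) = nf(\gamma)$ gives $f(n\gamma) = nf(\gamma)$ on currents; clearing denominators yields the identity for $c \in \QQ_{\geq 0}$ and a curve, and density of positive integer multicurves (Bonahon) together with joint continuity of $f$ and of $(c,\mu) \mapsto c\mu$ then gives it on all of $\ocurrents{\Sigma}$.

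Assuming positive homogeneity, positive linearity is equivalent to additivity: $f(\mu_1 + \mu_2) = f(\mu_1) + f(\mu_2)$. Since $f$ and addition are continuous on $\ocurrents{\Sigma}$, and since positive integer combinations of primitive curves are dense in $\ocurrents{\Sigma}$, it suffices to prove $f(\gamma_1 + \gamma_2) = f(\gamma_1) + f(\gamma_2)$ for two primitive curves $\gamma_1,\gamma_2 \in \ocurves{\Sigma}$.

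To establish this equality when $\gamma_1, \gamma_2$ admit an essential crossing, I would, for each $n \geq 1$, apply join quasi-smoothing to $\gamma_1^n$ and $\gamma_2^n$ at such a crossing to obtain a curve $\eta_n \in \ocurves{\Sigma}$ with $f(\eta_n) \leq n(f(\gamma_1)+f(\gamma_2)) + \zeta$. The resulting join point is an essential self-crossing of $\eta_n$, and splitting there recovers $\gamma_1^n$ and $\gamma_2^n$; split quasi-smoothing then gives the matching lower bound $n(f(\gamma_1)+f(\gamma_2)) \leq f(\eta_n) + \zeta$. The key geometric input is that $\tfrac{1}{n}\eta_n \to \gamma_1 + \gamma_2$ in $\ocurrents{\Sigma}$ as $n \to \infty$: the geodesic representative of $\eta_n$ tracks $\gamma_1$ roughly $n$ times and $\gamma_2$ roughly $n$ times, so its normalised counting measure on $\ogeod{\widetilde\Sigma}$ tends to the sum. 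Dividing both inequalities by $n$, passing to the limit, and using continuity and positive homogeneity of $f$ then yields $f(\gamma_1 + \gamma_2) = f(\gamma_1) + f(\gamma_2)$.

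The main obstacles are twofold. The first is the current convergence $\tfrac{1}{n}\eta_n \to \gamma_1 + \gamma_2$, which needs a careful equidistribution argument about the geodesic representative of the join curve, in the spirit of Bonahon's original density arguments. The second is the degenerate case in which $\gamma_1$ and $\gamma_2$ share no essential crossing, e.g.\ when they are distinct disjoint simple curves; here join quasi-smoothing is not directly applicable, and one must either approximate the $\gamma_i$ in $\ocurrents{\Sigma}$ by curves that do cross and invoke continuity of $f$, or introduce an auxiliary curve $\gamma_3$ essentially crossing both and bootstrap additivity from the pairs $\{\gamma_i, \gamma_3\}$ (where the crossing argument applies) to the pair $\{\gamma_1,\gamma_2\}$.
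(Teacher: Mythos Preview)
Your approach is genuinely different from the paper's, and considerably more laborious. The paper exploits the fact that Mart\'inez-Granado--Thurston's Theorem~A is actually stated for functions on \emph{weighted multicurves} $\RR\ocurves{\Sigma} = \{\sum_i c_i\gamma_i : c_i \geq 0,\ \gamma_i \in \ocurves{\Sigma}\}$, not just on $\ocurves{\Sigma}$. The paper defines $\widehat{f}$ on $\RR\ocurves{\Sigma}$ by the formula $\widehat{f}(\sum_i c_i\gamma_i) := \sum_i c_i f(\gamma_i)$, observes that the join and split quasi-smoothing properties for $f$ translate into quasi-smoothing for $\widehat{f}$ in MGT's sense (their ``additive union'' axiom is built in), and then invokes the \emph{uniqueness} clause of \Cref{thm:MGT-main} to conclude that the extension $f$ agrees with $\widehat{f}$ on $\RR\ocurves{\Sigma}$. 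Positive linearity on this dense subset plus continuity finishes the argument. No limits of join curves, no equidistribution, no case analysis on whether curves cross.

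Your proposal, by contrast, essentially attempts to reprove a fragment of MGT's extension theorem by hand. The two obstacles you flag are real and nontrivial: the convergence $\tfrac{1}{n}\eta_n \to \gamma_1 + \gamma_2$ in $\ocurrents{\Sigma}$ is true but requires a genuine geometric argument about how the geodesic representative of $[g_1^n g_2^n]$ distributes its lifts, and the disjoint-curves case needs a separate mechanism. There is also a subtlety you pass over: after joining $\gamma_1^n$ and $\gamma_2^n$, you assert that the join point is an \emph{essential} self-crossing of $\eta_n$, but this is not automatic from the definitions and would need justification before split quasi-smoothing can be applied. None of this is fatal, but the paper's route via uniqueness avoids all of it in a single stroke.
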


\begin{proof}
Let $\RR_+\ocurves{\Sigma} \subset \ocurrents{\Sigma}$ be the subspace of currents of the form $\sum_i c_i\gamma_i$ for some $c_i \geq 0$ and $\gamma_i \in \ocurves{\Sigma}$. Since $f\colon \ocurves{\Sigma} \to \RR$ is homogeneous, we can extend it to a function $\widehat{f}\colon \RR_+\ocurves{\Sigma} \to \RR$ by setting $\widehat{f}(\sum_i c_i\gamma_i) := \sum_i c_if(\gamma_i)$. The fact that $f\colon \ocurves{\Sigma} \to \RR$ satisfies the join and split quasi-smoothing properties in our terminology implies that $\widehat{f}\colon \RR_+\ocurves{\Sigma} \to \RR$ satisfies quasi-smoothing in the terminology of \cite{MartinezGranadoThurston2020}. In particular, by the uniqueness in \Cref{thm:MGT-main}, the restriction of $f\colon \ocurrents{\Sigma} \to \RR$ to $\RR_+\ocurves{\Sigma}$ coincides with $\widehat{f}$. By the definition of $\widehat{f}$, it therefore follows that $f(c_1\mu_1+c_2\mu_2) = c_1f(\mu_1) + c_2f(\mu_2)$ for all $c_1,c_2 \geq 0$ and $\mu_1,\mu_2 \in \RR_+\ocurves{\Sigma}$. Since $\RR_+\ocurves{\Sigma}$ is dense in $\ocurrents{\Sigma}$ \cite[Proposition~2]{Bonahon1988} and since $f\colon \ocurrents{\Sigma} \to \RR$ is continuous, it follows that $f\colon \ocurrents{\Sigma} \to \RR$ is positively linear, as required.
\end{proof}

\subsection{Intersection numbers}

Finally, we study the intersection numbers between currents. Let $\odgeod{\widetilde\Sigma} \subset \ogeod{\widetilde\Sigma} \times \ogeod{\widetilde\Sigma}$ be the set of pairs $(\gamma_1,\gamma_2)$ of geodesic lines on $\widetilde\Sigma$ that intersect transversely; one can show that $\odgeod{\widetilde\Sigma}$ is a $4$-manifold. The $G$-action on $\widetilde\Sigma$ induces a free and properly discontinuous action on $\odgeod{\widetilde\Sigma}$, and so we may define the quotient $\odgeod{\Sigma} := \odgeod{\widetilde\Sigma}/G$.

\begin{defn} \label{defn:intersection}
Let $\mu_1,\mu_2 \in \ocurrents{\Sigma}$. The product measure $\mu_1 \times \mu_2$ on $\ogeod{\widetilde\Sigma} \times \ogeod{\widetilde\Sigma}$ is $G$-invariant, so it induces a measure $\mu_1 \boxtimes \mu_2$ on $\odgeod{\Sigma}$. The \emph{intersection number} of $\mu_1$ and $\mu_2$, denoted $\iota_\Sigma(\mu_1,\mu_2)$, is the total mass of the measure $\mu_1 \boxtimes \mu_2$. We write $\iota(\mu_1,\mu_2)$ for $\iota_\Sigma(\mu_1,\mu_2)$ when the surface $\Sigma$ is clear.
\end{defn}

For $\gamma_1,\gamma_2 \in \ocurves{\Sigma}$, one may check that $\iota(\gamma_1,\gamma_2)$ is equal to the standard geometric intersection number of geodesic representatives $\widehat{\gamma_1}, \widehat{\gamma_2} \colon \Sone \to \Sigma$, i.e.\ the number of points at which $\widehat{\gamma_1}$ and $\widehat{\gamma_2}$ intersect transversely. Moreover, it turns out that the intersection number is always finite, and induces a continuous function $\ocurrents{\Sigma} \times \ocurrents{\Sigma} \to \RR$:

\begin{thm}[F.~Bonahon {\cite[\S 4.2]{Bonahon1986}}] \label{thm:bonahon-cts}
For any $\mu_1,\mu_2 \in \ocurrents{\Sigma}$, we have $\iota(\mu_1,\mu_2) < \infty$. Moreover, the function $\iota\colon \ocurrents{\Sigma} \times \ocurrents{\Sigma} \to \RR$ is homogeneous and continuous.
\end{thm}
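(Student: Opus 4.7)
The plan is to handle finiteness first---this is the heart of the matter---and to deduce homogeneity directly from the definition and continuity from the finiteness estimate combined with a weak-$*$ approximation argument. The decisive geometric fact is that two distinct transverse geodesics in $\widetilde\Sigma$ meet in a \emph{unique} point, so the map $\odgeod{\widetilde\Sigma} \to \widetilde\Sigma$ sending $(\ell_1,\ell_2)$ to its intersection point is a well-defined continuous $G$-equivariant map onto the cocompact $G$-space $\widetilde\Sigma$.

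For finiteness, I would fix a compact fundamental domain $D \subset \widetilde\Sigma$ for the $G$-action and cover it by finitely many relatively compact open sets $V_1, \ldots, V_N$. Set
$$A_i := \{ \ell \in \ogeod{\widetilde\Sigma} : \ell \cap \bar V_i \neq \emptyset \};$$
each $A_i$ is compact in $\ogeod{\widetilde\Sigma}$, because the endpoints on $\partial\widetilde\Sigma$ of geodesics meeting the compact set $\bar V_i$ form a compact subset of $\partial\widetilde\Sigma \times \partial\widetilde\Sigma$. Every $(\ell_1, \ell_2) \in \odgeod{\widetilde\Sigma}$ can be $G$-translated so that its intersection point lies in $D$, hence in some $V_i$, so $\bigcup_{i=1}^N (A_i \times A_i)$ contains a Borel fundamental domain for the $G$-action on $\odgeod{\widetilde\Sigma}$, giving
$$\iota(\mu_1,\mu_2) \le \sum_{i=1}^N (\mu_1 \times \mu_2)(A_i \times A_i) = \sum_{i=1}^N \mu_1(A_i)\mu_2(A_i) < \infty,$$
because the $\mu_j$ are Radon and the $A_i$ are compact. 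Homogeneity follows immediately: if $\mu_j = c_j\nu_j$ with $c_j \ge 0$, then $\mu_1 \times \mu_2 = c_1c_2(\nu_1 \times \nu_2)$, so $\iota(\mu_1,\mu_2) = c_1c_2\iota(\nu_1,\nu_2)$.

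For continuity, I would pick a continuous compactly supported $\phi\colon \widetilde\Sigma \to \RR_{\ge 0}$ with $\sum_{g \in G}\phi(g\cdot) \equiv 1$ (a standard partition of unity for the proper free cocompact $G$-action) and set $\Phi(\ell_1,\ell_2) := \phi(\ell_1 \cap \ell_2)$ on $\odgeod{\widetilde\Sigma}$. A direct $G$-unfolding identifies
$$\iota(\mu_1,\mu_2) = \int \Phi \, d(\mu_1 \times \mu_2),$$
which, since $\Phi$ is bounded and compactly supported inside $A \times A$ with $A := \{\ell \in \ogeod{\widetilde\Sigma} : \ell \cap \mathrm{supp}(\phi) \neq \emptyset\}$, is jointly weak-$*$ continuous in $(\mu_1,\mu_2)$: uniformly approximate $\Phi$ on the compact set $A \times A$ by finite tensor-product sums $\sum_k f_k(x)h_k(y)$ via Stone--Weierstrass, and invoke the definition of weak-$*$ convergence on each tensor summand.

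The main obstacle is technical rather than conceptual: although $\Phi$ is continuous on the open set $\odgeod{\widetilde\Sigma}$, it need not extend continuously by zero to $\ogeod{\widetilde\Sigma} \times \ogeod{\widetilde\Sigma}$, since the intersection-point map is singular along the diagonal of coincident geodesics. I would handle this by multiplying $\Phi$ by a smooth cutoff supported away from the tangent locus and bounding the missing mass via the finiteness estimate (applied to an approximation to the identity), thereby promoting the formal argument above to a genuine continuity proof.
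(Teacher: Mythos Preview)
The paper does not supply a proof of this statement; it is quoted from Bonahon's work without argument. Your sketch follows essentially the same route as Bonahon's original proof: the finiteness bound via compact sets $A_i$ of geodesics meeting pieces of a fundamental domain, and the partition-of-unity formula $\iota(\mu_1,\mu_2) = \int \Phi\, d(\mu_1 \times \mu_2)$, are exactly the standard ingredients, and the homogeneity observation is immediate.

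The one place where your outline is genuinely incomplete is the continuity step, and you have correctly located the difficulty. The function $\Phi$ fails to extend continuously across the diagonal of $\ogeod{\widetilde\Sigma} \times \ogeod{\widetilde\Sigma}$, so the Stone--Weierstrass approximation on $A \times A$ does not apply directly. Your proposed fix---cut off near the diagonal and control the residual mass---is the right idea, but the phrase ``bounding the missing mass via the finiteness estimate (applied to an approximation to the identity)'' does not yet supply the needed uniformity: for sequences $\mu_1^{(n)} \to \mu_1$, $\mu_2^{(n)} \to \mu_2$ you must show that the $(\mu_1^{(n)} \times \mu_2^{(n)})$-mass of a shrinking neighbourhood of the diagonal inside $A \times A$ is small \emph{uniformly in $n$}, and weak-$*$ convergence alone does not prevent mass from concentrating there in the limit. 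Bonahon's argument handles this with a more careful geometric estimate; a complete proof would need to supply that step.
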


We say $\lambda \in \ocurrents{\Sigma}$ is a \emph{filling current} if every geodesic line in $\widetilde\Sigma$ transversely intersects another geodesic line contained in the support of $\lambda$.

\begin{prop}[F.~Bonahon {\cite[Proposition~4 and its proof]{Bonahon1988}}] \label{prop:bonahon-filling}
Let $\lambda \in \ocurrents{\Sigma}$ be a filling current. Then $\iota(\lambda,\mu) > 0$ for all $\mu \in \ocurrents{\Sigma}$, and the subspace $\{ \mu \in \ocurrents{\Sigma} \mid \iota(\lambda,\mu) \leq 1 \}$ of $\ocurrents{\Sigma}$ is compact.
\end{prop}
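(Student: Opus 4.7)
The two conclusions are essentially independent, and both hinge on a single observation: if $\ell_0 \in \ogeod{\widetilde\Sigma}$ and $\ell_0'$ are transverse, then transversality is an open condition on $\ogeod{\widetilde\Sigma}^2$, so every sufficiently small product neighbourhood $U \times V$ around $(\ell_0,\ell_0')$ consists of transversely intersecting pairs, injects into $\odgeod{\Sigma}$ under the quotient by the free properly discontinuous $G$-action, and therefore contributes $\mu(U)\lambda(V)$ to the intersection number $\iota(\lambda,\mu)$.

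For positivity, given a nonzero $\mu \in \ocurrents{\Sigma}$, I would pick any geodesic $\ell_0$ in the support of $\mu$; the filling hypothesis produces $\ell_0'$ in the support of $\lambda$ meeting $\ell_0$ transversely, and on a product neighbourhood $U \times V$ as above both $\mu(U)$ and $\lambda(V)$ are positive by definition of support. Hence
\[
\iota(\lambda,\mu) = (\lambda \boxtimes \mu)(\odgeod{\Sigma}) \geq \mu(U)\lambda(V) > 0.
\]

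For compactness of $K := \{\mu \in \ocurrents{\Sigma} \mid \iota(\lambda,\mu) \leq 1\}$, continuity of $\iota(\lambda,\cdot)$ from \Cref{thm:bonahon-cts} shows $K$ is closed. Since $\ocurrents{\Sigma}$ sits in the dual of $C_c(\ogeod{\widetilde\Sigma})$ with the weak* topology, by a Banach--Alaoglu-type argument it suffices to show that $\sup_{\mu \in K}\mu(C) < \infty$ for every compact $C \subset \ogeod{\widetilde\Sigma}$. To get such a bound I would cover $C$ by finitely many open sets $U_1,\ldots,U_n$ chosen so that for each $i$ there is a partner $V_i \subset \ogeod{\widetilde\Sigma}$ with $\lambda(V_i)>0$, with $U_i \times V_i$ consisting of transversely intersecting pairs, and with the images of the $U_i \times V_i$ pairwise disjoint in $\odgeod{\Sigma}$. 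Then
\[
\sum_{i=1}^n \mu(U_i)\lambda(V_i) \leq (\lambda \boxtimes \mu)(\odgeod{\Sigma}) = \iota(\lambda,\mu) \leq 1,
\]
yielding $\mu(C) \leq \sum_i 1/\lambda(V_i)$, uniformly in $\mu \in K$. The existence of the partners $V_i$ for each point of $C$ is immediate from filling combined with the openness of transversality, and a finite subcover of $C$ then produces the $(U_i,V_i)$. I expect the main technical delicacy to be arranging pairwise disjointness of the images in $\odgeod{\Sigma}$ so that the summation above is valid rather than over-counting; this is handled by shrinking the $U_i$ and $V_i$ using the fact that the $G$-action on $\odgeod{\widetilde\Sigma}$ is free and properly discontinuous, so that locally the quotient map is an embedding.
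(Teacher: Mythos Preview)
The paper does not give its own proof of this proposition: it is stated as a cited result of Bonahon, with reference to \cite[Proposition~4 and its proof]{Bonahon1988}, and no argument is supplied in the paper itself. So there is nothing in the paper to compare your proposal against.

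That said, your outline is essentially Bonahon's argument and is correct. One remark: your worry about arranging the images of the $U_i \times V_i$ to be pairwise disjoint in $\odgeod{\Sigma}$ is unnecessary. You do not need the summed inequality $\sum_i \mu(U_i)\lambda(V_i) \leq 1$; it suffices to use, for each $i$ \emph{separately}, that $\mu(U_i)\lambda(V_i) \leq \iota(\lambda,\mu) \leq 1$, whence $\mu(U_i) \leq 1/\lambda(V_i)$, and then $\mu(C) \leq \sum_i \mu(U_i) \leq \sum_i 1/\lambda(V_i)$. This removes the only delicate step you flagged.
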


We record the following observation on intersection numbers for future reference.

\begin{lemma} \label{lem:iota-covers}
Let $\varphi\colon \Sigma' \to \Sigma$ be a $k$-sheeted covering map (for some $k < \infty$) that is a local isometry, and let $\widetilde\varphi\colon \widetilde{\Sigma'} \to \widetilde\Sigma$ be a lift of $\varphi$. Then $\iota_{\Sigma'}(\mu_1 \circ \widetilde\varphi,\mu_2 \circ \widetilde\varphi) = k \cdot \iota_\Sigma(\mu_1,\mu_2)$ for all $\mu_1,\mu_2 \in \ocurrents{\Sigma}$.
\end{lemma}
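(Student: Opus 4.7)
The plan is to reduce the identity to a statement about measures on a fundamental domain, using that a $G'$-fundamental domain for the action on $\odgeod{\widetilde\Sigma}$ consists of $k$ translates of a $G$-fundamental domain, where $G = \pi_1(\Sigma)$ and $G' = \pi_1(\Sigma')$.

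First I would fix the relevant identifications. Since $\varphi\colon \Sigma' \to \Sigma$ is a local isometry between closed hyperbolic surfaces and both $\widetilde\Sigma$ and $\widetilde{\Sigma'}$ are simply connected, the lift $\widetilde\varphi$ is a global isometry (both spaces are simply connected complete surfaces of constant curvature $-1$, so $\widetilde\varphi$ is a covering map between simply connected spaces, hence a homeomorphism, and is a local isometry by hypothesis). Via $\widetilde\varphi$, identify $\widetilde{\Sigma'}$ with $\widetilde\Sigma$; this identifies $G'$ with a subgroup of $G$ of index $k$, and identifies $\ogeod{\widetilde{\Sigma'}}$ and $\odgeod{\widetilde{\Sigma'}}$ equivariantly with $\ogeod{\widetilde\Sigma}$ and $\odgeod{\widetilde\Sigma}$ respectively. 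Under this identification, $\mu_i \circ \widetilde\varphi$ is simply the measure $\mu_i$ on $\ogeod{\widetilde\Sigma}$, viewed as $G'$-invariant via the inclusion $G' \leq G$. In particular, the product measure $(\mu_1 \circ \widetilde\varphi) \times (\mu_2 \circ \widetilde\varphi)$ on $\ogeod{\widetilde{\Sigma'}} \times \ogeod{\widetilde{\Sigma'}}$ coincides with $\mu_1 \times \mu_2$ on $\ogeod{\widetilde\Sigma} \times \ogeod{\widetilde\Sigma}$.

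Next I would compute both intersection numbers by integrating over fundamental domains. Let $D \subset \odgeod{\widetilde\Sigma}$ be a (measurable) fundamental domain for the $G$-action, so that $\iota_\Sigma(\mu_1,\mu_2) = (\mu_1 \times \mu_2)(D)$ by \Cref{defn:intersection}. Choose a set of coset representatives $g_1,\ldots,g_k$ for $G'$ in $G$. Then $D' := \bigsqcup_{j=1}^k g_j D$ is a measurable fundamental domain for the $G'$-action on $\odgeod{\widetilde\Sigma}$, and $\iota_{\Sigma'}(\mu_1 \circ \widetilde\varphi,\mu_2 \circ \widetilde\varphi) = (\mu_1 \times \mu_2)(D')$. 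Because $\mu_1 \times \mu_2$ is $G$-invariant, each translate $g_j D$ has the same measure as $D$, so
\[
(\mu_1 \times \mu_2)(D') = \sum_{j=1}^k (\mu_1 \times \mu_2)(g_j D) = k \cdot (\mu_1 \times \mu_2)(D) = k \cdot \iota_\Sigma(\mu_1,\mu_2),
\]
which gives the desired equality.

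The only subtle point, and the main thing to verify carefully, is that the identification of $D'$ as a $G'$-fundamental domain is legitimate and that the product measure descends correctly to $\odgeod{\widetilde\Sigma}/G'$; this is where one uses that $[G:G'] = k$ (finite) so that the disjoint union $\bigsqcup_j g_j D$ is measurable and that $G'$ acts freely and properly discontinuously on $\odgeod{\widetilde\Sigma}$ (inherited from $G$). Once this bookkeeping is done, the result is essentially a one-line consequence of $G$-invariance of $\mu_1 \times \mu_2$.
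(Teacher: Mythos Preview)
Your proof is correct and rests on the same underlying idea as the paper's: identifying $\widetilde{\Sigma'}$ with $\widetilde\Sigma$ via the isometry $\widetilde\varphi$ so that $G' = \pi_1(\Sigma')$ sits as an index-$k$ subgroup of $G = \pi_1(\Sigma)$, and then using $G$-invariance of $\mu_1 \times \mu_2$ to compare total masses.

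The execution differs slightly. The paper works \emph{downstairs}: it identifies $\odgeod{\Sigma}$ with the space of triples $(x,t_1,t_2)$ with $x \in \Sigma$ and $t_1 \neq t_2 \in T_x^1\Sigma$, uses this to see that $\varphi$ induces a $k$-sheeted covering $\varphi'\colon \odgeod{\Sigma'} \to \odgeod{\Sigma}$, and checks that $(\mu_1 \circ \widetilde\varphi) \boxtimes (\mu_2 \circ \widetilde\varphi) = (\mu_1 \boxtimes \mu_2) \circ \varphi'$, from which the factor $k$ follows by summing over sheets. You instead work \emph{upstairs}, choosing a measurable $G$-fundamental domain $D \subset \odgeod{\widetilde\Sigma}$ and observing that $\bigsqcup_j g_j D$ is a $G'$-fundamental domain. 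Your route is a bit more elementary in that it avoids the tangent-bundle description of $\odgeod{\Sigma}$; the paper's route has the mild advantage of making the induced map $\varphi'$ on quotients explicit, which could be reused elsewhere. Both are perfectly valid.
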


\begin{proof}
The isometry $\widetilde\varphi$ induces a homeomorphism $\ogeod{\widetilde{\Sigma'}} \times \ogeod{\widetilde{\Sigma'}} \to \ogeod{\widetilde\Sigma} \times \ogeod{\widetilde\Sigma}$ that maps $\odgeod{\widetilde{\Sigma'}}$ onto $\odgeod{\widetilde\Sigma}$; let $\widetilde\varphi'\colon \odgeod{\widetilde{\Sigma'}} \to \odgeod{\widetilde\Sigma}$ be this induced map. Moreover, we can canonically identify $\odgeod{\Sigma}$ with the set of triples $(x,t_1,t_2)$, where $x \in \Sigma$ and $t_1,t_2 \in T_x^1\Sigma \cong \Sone$ are such that $t_1 \neq t_2$, and the topology is the ``usual'' one (see \cite{Bonahon1986}); this viewpoint allows us to see that $\varphi$ induces a $k$-sheeted covering map $\varphi'\colon \odgeod{\Sigma'} \to \odgeod{\Sigma}$. One may check that we have $p_\Sigma \circ \widetilde\varphi' = \varphi' \circ p_{\Sigma'}$, where $p_\Sigma \colon \odgeod{\widetilde\Sigma} \to \odgeod{\Sigma}$ and $p_{\Sigma'} \colon \odgeod{\widetilde{\Sigma'}} \to \odgeod{\Sigma'}$ are the canonical covering maps. It follows that $(\mu_1 \circ \widetilde\varphi) \boxtimes (\mu_2 \circ \widetilde\varphi) = (\mu_1 \boxtimes \mu_2) \circ \varphi'$, i.e.\ we have $\left[(\mu_1 \circ \widetilde\varphi) \boxtimes (\mu_2 \circ \widetilde\varphi)\right](A) = (\mu_1 \boxtimes \mu_2)(\varphi'(A))$ for every Borel subset $A \subseteq \odgeod{\Sigma'}$ such that $\varphi'|_A$ is injective. This implies that $\left[(\mu_1 \circ \widetilde\varphi) \boxtimes (\mu_2 \circ \widetilde\varphi)\right](\odgeod{\Sigma'}) = k \cdot (\mu_1 \boxtimes \mu_2)(\odgeod{\Sigma})$, as required.
\end{proof}

\section{Stable word lengths} \label{sec:stable-lengths}

Throughout this section, we fix a biautomatic group $G$ with a (uniformly finite-to-one) biautomatic structure $(A,\call)$. We define several functions $G \to \RR$ associated to lengths of words in $\call$, and study the relationship between them.

\begin{prop} \label{prop:stable-len}
Let $g \in G$, and for each $n \geq 1$ let $w_n \in \mathcal{L}$ be a word representing $g^n$. Then the sequence $\left( |w_n| / n \right)_{n=1}^\infty$ converges and the limit $\lim_{n \to \infty} |w_n|/n$ is rational.
\end{prop}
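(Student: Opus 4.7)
My plan is to prove convergence and rationality of $(|w_n|/n)_n$ by leveraging the biautomatic structure and the regularity of $\call$.

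First, applying part (ii) of \Cref{thm:biauto-consts} with $v = w$ and $a = 1 \in A$ shows that any two $\call$-words representing the same group element have lengths within $\nu$ of each other. Hence $|w_n|$ is determined up to an additive constant independent of $n$, and the asymptotic behaviour of $|w_n|/n$ does not depend on the choice of $w_n$.

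For convergence, I would aim to prove an approximate subadditivity inequality $|w_{m+n}| \leq |w_m|+|w_n|+K$ for some constant $K$ depending only on $(A,\call)$; Fekete's lemma applied to $|w_n|+K$ then yields convergence of $|w_n|/n$ to some real limit. The natural idea is to compare an $\call$-path from $1$ to $g^{m+n}$ with the concatenation of an $\call$-path labelled by $w_m$ from $1$ to $g^m$ and its $g^m$-translate labelled by $w_n$ from $g^m$ to $g^{m+n}$; although this concatenation is not in general an $\call$-path, a careful application of the two-sided fellow-traveller property (\Cref{defn:biauto}\ref{it:biauto-ft}) together with the $(\nu,\nu)$-quasi-geodesic property (part (iii) of \Cref{thm:biauto-consts}) should produce an $\call$-representative of $g^{m+n}$ whose length is at most $|w_m|+|w_n|+K$.

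For rationality of the limit, I would exploit the finite-state nature of the automaton recognising $\call$. Since the centraliser $C_G(g)$ is $\call$-quasiconvex by \Cref{thm:qconvex}\ref{it:qconvex-centra} and carries an associated biautomatic structure $(B,\calm)$ by \Cref{thm:qconvex}\ref{it:qconvex-biauto}, and since $\calm$-words and $\call$-words representing the same element have comparable lengths, one may reduce to studying $|v_n|/n$ where $v_n \in \calm$ represents $g^n$ inside $C_G(g)$; in this setting, $g$ is central. A pumping argument in the finite-state automaton recognising $\calm$, combined with a pigeonhole argument on automaton states and cosets of $\langle g \rangle$ in $C_G(g)$, should then yield an eventual linear recurrence $|v_{n+N}| - |v_n| = M$ for some fixed $N \geq 1$ and $M \in \ZZ$ and all sufficiently large $n$, giving $\lim |v_n|/n = M/N \in \QQ$.

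The main technical obstacle is the pumping step in the rationality argument: one must identify pumpable loops in the automaton whose effect on the represented element lies in $\langle g \rangle$ rather than in an arbitrary element of $C_G(g)$. Making this rigorous requires a delicate interaction between the regular language structure of $\calm$, the centrality of $g$, and the biautomatic fellow-traveller constraints forcing the pumped portion to remain aligned with the $\langle g \rangle$-orbit.
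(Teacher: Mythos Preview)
Your convergence argument via approximate subadditivity and Fekete's lemma is fine, and indeed more elementary than what the paper does (the paper derives convergence as a byproduct of the explicit rational limit it produces).

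The rationality argument, however, has a genuine gap that you yourself flag. Your proposed pigeonhole on pairs (automaton state, coset of $\langle g \rangle$ in $C_G(g)$) only works if there are finitely many such cosets, and in general $\langle g \rangle$ has infinite index in $C_G(g)$: already for $G$ containing a copy of $\ZZ^2$ one has $C_G(g) \supseteq \ZZ^2$ with $[\ZZ^2:\langle g\rangle]=\infty$. Nor is there any reason for the $\calm$-path representing $g^n$ to stay in a bounded neighbourhood of $\langle g\rangle$ inside $C_G(g)$, so you cannot restrict the pigeonhole to finitely many cosets that way either. The ``delicate interaction'' you hope for does not materialise without further input.

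The paper circumvents this by iterating the quasiconvexity step once more: from $C_G(g)$ one passes to its centre $Z(C_G(g))$, which is again $\call_1$-quasiconvex (being a centraliser of a finite generating set) and is now a finitely generated abelian group containing $g$. Splitting off torsion gives $H \cong \ZZ^N$ with $g \in H$, and $H$ inherits a biautomatic structure $(B,\calm)$. At this point the paper invokes a known result for biautomatic structures on $\ZZ^N$ \cite[Proposition~4.2]{Valiunas2021a}: there is a homogeneous function $f\colon H \to \QQ$ with $\big||u|-f(h)\big|\leq\rho$ for every $\calm$-representative $u$ of $h$. This immediately gives $|w_n|/n \to f(g) \in \QQ$. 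The passage to an abelian group is essential: it is precisely in $\ZZ^N$ that the combinatorics of the automaton become tractable enough to extract rationality, and even there the argument (carried out in the cited reference) is not an immediate pumping lemma.
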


\begin{proof}
Suppose first that $g$ has finite order, and so the subgroup $\langle g \rangle$ is finite. Since $(A,\mathcal{L})$ is finite-to-one, it follows that the set $\{ w_n \mid n \geq 1 \}$ is finite, and so the sequence $\left( |w_n| \right)_{n=1}^\infty$ is bounded. Therefore, $|w_n| / n \to 0 \in \QQ$ as $n \to \infty$, which implies the result.

Suppose now that $g$ has infinite order. By \Cref{thm:qconvex}\ref{it:qconvex-centra}, the centraliser $C_G(g)$ is $\call$-quasiconvex (with associated structure $(A_1,\call_1)$, say), and so finitely generated, implying (again by \Cref{thm:qconvex}) that its centre $Z(C_G(g))$ is $\call_1$-quasiconvex (with associated structure $(A_2,\call_2)$, say). Thus $Z(C_G(g))$ is a finitely generated abelian group containing $g$, and so (as $g$ has infinite order) we have $Z(C_G(g)) = H \times F$, where $H \cong \ZZ^N$, $g \in H$, and $F$ is finite. In particular, $H$ has finite index in $Z(C_G(g))$, and is therefore $\call_2$-quasiconvex; let $(B,\calm)$ be the associated biautomatic structure on $H$.

By applying \Cref{thm:qconvex}\ref{it:qconvex-biauto} three times, it follows that there exists a constant $\kappa \geq 1$ such that for each $n \geq 1$, if $v_n \in \calm$ is a word representing $g^n$ then $\big| |v_n|-|w_n| \big| \leq \kappa$. Moreover, since $(B,\calm)$ is a biautomatic structure on $H \cong \ZZ^N$, there exists a constant $\rho \geq 0$ and a function $f\colon H \to \QQ$ such that $f(h^n) = nf(h)$ for all $h \in H$ and $n \geq 1$, and such that $\big| f(h) - |u| \big| \leq \rho$ for any $h \in H$ and any word $u \in \calm$ representing $h$ \cite[Proposition~4.2 and its proof]{Valiunas2021a}. In particular, for each $n \geq 1$ we have
\[
\left| \frac{|w_n|}{n} - f(g) \right| \leq \left| \frac{|v_n|}{n} - \frac{f(g^n)}{n} \right| + \frac{\kappa}{n} = \frac{\big| f(g^n) - |v_n| \big|+\kappa}{n} \leq \frac{\rho+\kappa}{n},
\]
and so $|w_n| / n \to f(g) \in \QQ$ as $n \to \infty$, as required.
\end{proof}

Motivated by this, we introduce the following terminology.
\begin{defn} \label{defn:biauto-notation}
Let $g \in G$.
\begin{enumerate}[label=(\roman*)]
    \item The \emph{$\call$-word length} of $g$, denoted $\modL{g}$, is the length of the shortest word in $\call$ representing $g$.
    \item The \emph{conjugacy $\call$-word length} of $g$ is defined as $\normL{g} := \min_{h \in G} \modL{hgh^{-1}}$.
    \item The \emph{stable $\call$-word length} of $g$ is defined as $\tau_\call(g) := \lim_{n \to \infty} \modL{g^n}/n$.
\end{enumerate}
\end{defn}

It follows from \Cref{prop:stable-len} that $\tau_\call(g)$ is well-defined: indeed, this number is equal to $\lim_{n \to \infty} \frac{|w_n|}{n}$ in the notation of the Proposition. We make the following easy observation.

\begin{lemma} \label{lem:tau-conjinvariant}
We have $\tau_\call(hgh^{-1}) = \tau_\call(g)$ for all $g,h \in G$.
\end{lemma}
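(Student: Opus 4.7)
The plan is to exploit the identity $(hgh^{-1})^n = hg^nh^{-1}$ together with the bounded-difference statement in \Cref{thm:biauto-consts}(ii) to show that the numerators $\modL{(hgh^{-1})^n}$ and $\modL{g^n}$ differ by a constant independent of $n$, then divide by $n$ and pass to the limit.

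More precisely, let $\nu \geq 1$ be the constant of \Cref{thm:biauto-consts}, write $k = d_A(1,h)$, and pick generators $a_1,\dots,a_k \in A$ with $h = a_1 \cdots a_k$. For each $n \geq 1$ and each $i \in \{0,1,\dots,k\}$, let $u_i^{(n)} \in \call$ be a word of minimal length with $\pi_A(u_i^{(n)}) = (a_1 \cdots a_i) g^n$; such words exist because $\pi_A|_\call$ is surjective, and by construction $u_0^{(n)}$ represents $g^n$ while $u_k^{(n)}$ represents $h g^n$. Since $\pi_A(u_i^{(n)}) = \pi_A(a_i u_{i-1}^{(n)})$ with $u_i^{(n)}, u_{i-1}^{(n)} \in \call$, \Cref{thm:biauto-consts}(ii) yields $\bigl| |u_i^{(n)}| - |u_{i-1}^{(n)}| \bigr| \leq \nu$; telescoping gives $\bigl| \modL{hg^n} - \modL{g^n} \bigr| \leq \nu k$. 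The same argument applied to the sequence obtained by successively right-multiplying $hg^n$ by the letters of a geodesic word for $h^{-1}$ gives $\bigl| \modL{hg^nh^{-1}} - \modL{hg^n} \bigr| \leq \nu k$, so altogether
\[
\bigl| \modL{hg^nh^{-1}} - \modL{g^n} \bigr| \leq 2\nu k.
\]

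Dividing by $n$ and letting $n \to \infty$, the right-hand side tends to $0$, whereas by \Cref{prop:stable-len} the quantities $\modL{hg^nh^{-1}}/n = \modL{(hgh^{-1})^n}/n$ and $\modL{g^n}/n$ converge to $\tau_\call(hgh^{-1})$ and $\tau_\call(g)$, respectively. Hence $\tau_\call(hgh^{-1}) = \tau_\call(g)$.

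The only subtlety is making sure \Cref{thm:biauto-consts}(ii) applies at each step, which requires both $u_{i-1}^{(n)}$ and $u_i^{(n)}$ to lie in $\call$; this is automatic from our choice of $u_i^{(n)}$ as a shortest $\call$-representative. No further machinery (fellow-traveller, quasi-convexity, etc.) is needed.
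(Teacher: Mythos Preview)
Your proof is correct and follows essentially the same approach as the paper's. The paper condenses your telescoping argument into the single assertion $\big| |w_n| - |v_n| \big| \leq 2\nu \max\{|h|_A, 1\}$ (with $w_n, v_n \in \call$ shortest representatives of $g^n$ and $hg^nh^{-1}$), which is precisely what your step-by-step application of \Cref{thm:biauto-consts}(ii) establishes in detail.
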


\begin{proof}
Let $\nu \geq 1$ be the constant given in \Cref{thm:biauto-consts}, and let $w_n,v_n \in \call$ be the shortest words representing $g^n,hg^nh^{-1}$, respectively. Then $\big| |w_n|-|v_n| \big| \leq 2\nu\max\{|h|_A,1\}$, and so $\tau_{\mathcal{L}}(g) = \lim_{n \to \infty} |w_n|/n = \lim_{n \to \infty} |v_n|/n = \tau_{\mathcal{L}}(hgh^{-1})$, as required.
\end{proof}

In particular, it follows that $\tau_\call(g) = \min_{h \in G} \tau_\call(hgh^{-1}) = \min_{h \in G} \lim_{n \to \infty} \modL{hg^nh^{-1}}/n$ for any $g \in G$. In the remainder of this section, we prove that if $G$ is hyperbolic then the minimum and the limit in this expression can be swapped, and therefore $\tau_\call(g) = \lim_{n \to \infty} \normL{g^n}/n$ for all $g \in G$.

\begin{lemma} \label{lem:quasigeodesic}
Suppose $G$ is hyperbolic. Then there exist constants $\lambda \geq 1$ and $\varepsilon \geq 0$ satisfying the following property. Let $g \in G$ be an element of infinite order, and let $w \in \mathcal{L}$ be a word representing a conjugate of $g$ with $|w| = \normL{g}$. Then any bi-infinite path in $\cay(G,A)$ labelled by $\cdots w w w \cdots$ is a $(\lambda,\varepsilon)$-quasi-geodesic.
\end{lemma}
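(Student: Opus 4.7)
The plan is to bound $|w|$ above by a universal multiple of the translation length of $h := \pi_A(w)$ in $\cay(G,A)$, and then use this control to show the bi-infinite path is a quasi-geodesic by comparing it with the $\langle h \rangle$-orbit of the identity.

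First, I will show that $|w| \leq \lambda_0 \tau_A(h)$ for a universal constant $\lambda_0$, where $\tau_A(h) := \lim_{n \to \infty} |h^n|_A/n$ denotes the translation length.  Standard results in hyperbolic group theory yield a universal lower bound $\tau_A(h) \geq \tau_0 > 0$ on translation lengths of infinite-order elements of $G$, as well as a universal constant $C \geq 0$ such that every infinite-order $h$ admits a conjugate $h'$ with $|h'|_A \leq \tau_A(h) + C$.  Combined with the fact that $\mathcal{L}$-words are $(\nu, \nu)$-quasi-geodesics by \Cref{thm:biauto-consts}, so that $\modL{g'} \leq \nu|g'|_A + \nu$ for all $g' \in G$, this gives
\[
    |w| = \normL{g} = \normL{h} \leq \modL{h'} \leq \nu|h'|_A + \nu \leq \nu(\tau_A(h)+C)+\nu \leq \lambda_0 \tau_A(h),
\]
where $\lambda_0 := \nu + \nu(C+1)/\tau_0$.

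Next, after translating so that $\widehat{p}(0) = 1$, we have $\widehat{p}(k|w|) = h^k$ and $|h^k|_A \geq k\tau_A(h) \geq k|w|/\lambda_0$ for every $k \geq 1$ by subadditivity.  Thus the discrete sub-path $(h^k)_{k \in \ZZ}$ is uniformly quasi-geodesic.  Moreover, each period $\widehat{p}|_{[k|w|,(k+1)|w|]}$ is labelled by $w \in \mathcal{L}$, hence is a $(\nu, \nu)$-quasi-geodesic, and therefore stays within a universal distance $R = R(\nu, \delta)$ of the geodesic segment $[h^k, h^{k+1}]$ by the Morse lemma, where $\delta$ denotes the hyperbolicity constant of $\cay(G, A)$.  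Combining, $p$ remains within bounded universal distance of the bi-infinite concatenation of geodesic segments $\cdots [h^{-1}, 1][1, h][h, h^2] \cdots$ while making uniform progress at the scale of one period.  A standard local-to-global criterion for quasi-geodesics in $\delta$-hyperbolic spaces then upgrades this local quasi-geodesicity to the desired global $(\lambda, \varepsilon)$-quasi-geodesicity of $p$.

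The principal obstacle is handling sub-paths of $p$ whose endpoints lie in different but adjacent copies of $w$: a direct length estimate gives only an additive error of order $|w|$, which is not universally bounded.  To overcome this, one exploits the minimality $|w| = \normL{g}$: every cyclic conjugate $w' = w_{[r:|w|]}w_{[0:r]}$ of $w$ represents a conjugate $h^{(r)}$ of $h$, hence of $g$, and so $\modL{h^{(r)}} \geq \normL{g} = |w|$, which yields the uniform lower bound $|h^{(r)}|_A \geq |w|/\nu - 1$.  Combining this with hyperbolic stability of quasi-geodesics allows the problematic error terms to be absorbed into the multiplicative constant, establishing the $(\lambda, \varepsilon)$-quasi-geodesic property on all scales.
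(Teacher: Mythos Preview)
Your overall strategy has the right shape, and the two ingredients you isolate --- the bound $|w|\le\lambda_0\tau_A(h)$ and the cyclic-conjugate observation $|h^{(r)}|_A\ge|w|/\nu-1$ --- are both correct and natural. However, the final step does not go through as written.

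The difficulty is exactly the one you flag: controlling subpaths $\eta$ of length $<|w|$ that straddle two periods. Your cyclic-conjugate bound only controls windows of length \emph{exactly} $|w|$. If one tries to bound $d(\widehat p(s),\widehat p(t))$ for such an $\eta$ by the triangle inequality via $\widehat p(s+|w|)$, one gets
\[
d(\widehat p(s),\widehat p(t))\ \ge\ |w|/\nu-1-\bigl(|w|-(t-s)\bigr)\ =\ (t-s)-|w|(1-1/\nu)-1,
\]
an additive error of order $|w|(1-1/\nu)$, which is not uniform. The same non-uniformity appears if one instead tries to feed the broken geodesic $\cdots[1,h][h,h^2]\cdots$ into a local-to-global argument: your bound $|w|\le\lambda_0\tau_A(h)$ only yields $(h^{-1},h)_1\le(\lambda_0-1)\tau_A(h)$ for the Gromov product at each corner, which again grows with $|w|$. ``Hyperbolic stability'' (the Morse lemma) tells you $p$ stays within bounded distance of the axis of $h$, but that by itself does not prevent backtracking along the axis, so it cannot absorb these error terms into the multiplicative constant.

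The paper closes this gap with a genuinely different mechanism that you do not invoke: the subword-extension property of biautomatic structures (\Cref{thm:biauto-consts}\ref{it:biauto-consts-states}). Writing $\eta$ as $w_1w_2$ with $w=w_2vw_1$, one obtains $u_1vu_2\in\mathcal L$ with $|u_i|\le\nu$. Comparing an $\mathcal L$-representative $w'$ of the conjugate $\pi_A(w_1w_2v)$ with the $\mathcal L$-word $u_1vu_2$ (whose endpoints differ from those of $w'$ by at most $d_A(\widehat p(s),\widehat p(t))+2\nu$) and using $|w'|\ge|w|$ gives directly
\[
|\eta|\ \le\ \nu\, d_A(\widehat p(s),\widehat p(t))+2\nu(\nu+1),
\]
i.e.\ $\eta$ is a $(\nu,2\nu(\nu+1))$-quasi-geodesic. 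This is the uniform local estimate needed for local-to-global; the key point is that minimality of $|w|$ is being compared against the \emph{short} $\mathcal L$-word $u_1vu_2$ of length $\approx |w|-|\eta|$, not against a word of length $|w|$ as in your cyclic-conjugate argument.
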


\begin{proof}
Let $\nu \geq 1$ be the constant given in \Cref{thm:biauto-consts}. Then there exist constants $\ell,\lambda \geq 1$ and $\varepsilon \geq 0$ such that every $\ell$-local $(\nu,2\nu(\nu+1))$-quasi-geodesic in $\cay(G,A)$ is a $(\lambda,\varepsilon)$-quasi-geodesic \cite[Chapitre~3, Th\'eor\`eme~1.4]{CoornaertDelzantPapadopoulos1990}. Moreover, if $w \in A^*$ is a word representing an infinite order element, then any bi-infinite path in $\cay(G,A)$ labelled by $\cdots w w w \cdots$ is a quasi-geodesic \cite[Proposition~8.21]{GhysHarpe1990}. Since $A$ is finite, there are only finitely many words $w \in \call$ of length $< \ell$; therefore, after increasing $\lambda \geq 1$ and $\varepsilon \geq 0$ if necessary, we may assume that for every word $w \in \call$ with $|w| < \ell$ representing an element of infinite order, a path in $\cay(G,A)$ labelled by $\cdots w w w \cdots$ is a $(\lambda,\varepsilon)$-quasi-geodesic.

It is therefore enough to show the following: if $g \in G$ and $w \in \call$ are such that $w$ represents $g$ and $|w| = \normL{g} \geq \ell$, and if $\gamma \subseteq \cay(G,A)$ is a bi-infinite path labelled by $\cdots w w w \cdots$, then any subpath of $\gamma$ of length $\leq \ell$ is a $(\nu,2\nu(\nu+1))$-quasi-geodesic. Thus, let $\eta \subset \gamma$ be a subpath of length $\leq \ell$ from $h \in \gamma$ to $k \in \gamma$. We aim to show that $|\eta| \leq \nu d_A(h,k)+2\nu(\nu+1)$.

Since $|\eta| \leq \ell \leq |w|$, it follows that $\eta$ is labelled by a subword of $ww$; however, since paths labelled by $w$ are $(\nu,\nu)$-quasi-geodesic, we may without loss of generality assume that $\eta$ is \emph{not} labelled by subword of $w$. Thus $\eta$ is labelled by a word $w_1w_2$, where $w_1$ and $w_2$ are a suffix and a prefix of $w$, respectively. Since $|w_1| + |w_2| = |\eta| \leq \ell \leq |w|$, it follows that $w = w_2 v w_1$ for some $v \in A^*$. Since $v$ is a subword of a word in $\call$, there exist words $u_1,u_2 \in A^*$ of length $\leq \nu$ such that $u_1 v u_2 \in \call$. See \Cref{fig:pf-lem-qg}.

Now let $g' \in G$ be the element represented by $w_1w_2v \in A^*$, so that $g$ and $g'$ are conjugate in $G$, and let $w' \in \call$ be a word representing $g'$. By construction, we have $w',u_1vu_2 \in \call$, and there exist paths in $\cay(G,A)$ labelled by those two words that start distance $\leq d_A(h,k)+|u_1| \leq d_A(h,k)+\nu$ apart and end distance $\leq |u_2| \leq \nu$ apart. Therefore,
\begin{align*}
|w| &= \normL{g} \leq |w'| \leq |u_1vu_2| + \nu(d_A(h,k)+2\nu) \leq |v| + 2\nu + \nu(d_A(h,k)+2\nu) \\
&= |w|-|\eta|+\nu d_A(h,k)+2\nu(\nu+1).
\end{align*}
It follows that $|\eta| \leq \nu d_A(h,k)+2\nu(\nu+1)$, as required.
\end{proof}

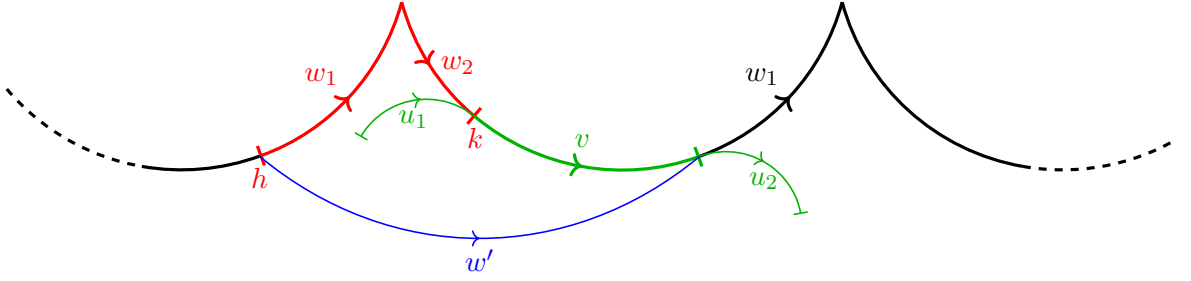
\begin{figure}
    \centering
    \begin{tikzpicture}
    \begin{scope}[very thick,decoration={markings,mark=at position 0.5 with {\arrow{>}}}]
        \draw [dashed] (0,0) arc (-140:-100:3) node (s) {};
        \draw (s.center) arc (-100:-70:3) node (h) {};
        \draw [red,|-,postaction={decorate}] (h.center) node [below] {$h$} arc (-70:-15:3) node [midway,above left] {$w_1$} node (t) {};
        \draw [red,-|,postaction={decorate}] (t.center) arc (-165:-130:3) node [midway,right] {$w_2$} node [below] (k) {$k$};
        \draw [green!70!black,-|,postaction={decorate}] (k.north) arc (-130:-70:3) node [midway,above,yshift=3pt] {$v$} node (h1) {};
        \draw [postaction={decorate}] (h1.center) arc (-70:-15:3) node [midway,above left] {$w_1$} node (t1) {};
        \draw (t1.center) arc (-165:-100:3) node (s1) {};
        \draw [dashed] (s1.center) arc (-100:-60:3);
        \draw [semithick,blue,postaction={decorate}] (h.center) to[bend right=40] node [midway,below] {$w'$} (h1.center);
        \draw [semithick,green!70!black,-|,postaction={decorate}] (h1.center) arc (110:10:1) node [midway,below] {$u_2$};
    \end{scope}
    \draw [semithick,green!70!black,-|,postaction={decorate},decoration={markings,mark=at position 0.5 with {\arrow{<}}}] (k.north) arc (50:150:1) node [midway,below] {$u_1$};
    \end{tikzpicture}
    \caption{The proof of \Cref{lem:quasigeodesic}. The thick path is $\gamma$, the red subpath is $\eta$, and the blue and green paths are labelled by words in $\call$.}
    \label{fig:pf-lem-qg}
\end{figure}

\begin{lemma} \label{lem:translation-lengths-same}
Suppose $G$ is hyperbolic. Then $\normL{g^n}/n \to \tau_\call(g)$ as $n \to \infty$ for every $g \in G$.
\end{lemma}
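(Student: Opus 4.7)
If $g$ has finite order, then the set $\{g^n\}_{n\geq 1}$ is finite, so $\{\normL{g^n}\}_{n\geq 1}$ is bounded by the uniformly finite-to-one property, while $\tau_\call(g)=0$; the claim is then immediate. Henceforth assume $g$ has infinite order. The upper bound $\limsup_n\normL{g^n}/n\leq\tau_\call(g)$ is immediate from $\normL{g^n}\leq\modL{g^n}$ combined with \Cref{prop:stable-len}, so the heart of the argument is the matching lower bound $\liminf_n\normL{g^n}/n\geq\tau_\call(g)$.

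For each $n\geq 1$, pick $w_n\in\call$ with $|w_n|=\normL{g^n}$ representing a conjugate $e_n:=h_ng^nh_n^{-1}$ of $g^n$. Since $e_n$ has infinite order, \Cref{lem:quasigeodesic} applies and yields uniform constants $\lambda\geq 1$ and $\varepsilon\geq 0$ such that every path labelled by $w_n^k$ is a $(\lambda,\varepsilon)$-quasi-geodesic from $1$ to $e_n^k$ in $\cay(G,A)$. Meanwhile, \Cref{prop:stable-len} applied to the infinite-order element $e_n$ gives
\[
\frac{\modL{e_n^k}}{k}\xrightarrow[k\to\infty]{}\tau_\call(e_n)=\tau_\call(g^n)=n\tau_\call(g).
\]
Since $w_n^k$ is a path of length $k\normL{g^n}$ from $1$ to $e_n^k$, we have $d_A(1,e_n^k)\leq k\normL{g^n}$; combining with the $(\nu,\nu)$-quasi-geodesicity of $\call$-words (\Cref{thm:biauto-consts}) gives $\modL{e_n^k}\leq\nu d_A(1,e_n^k)+\nu\leq\nu k\normL{g^n}+\nu$. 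Dividing by $k$ and letting $k\to\infty$ yields $n\tau_\call(g)\leq\nu\normL{g^n}$, i.e.\ the weaker bound $\liminf_n\normL{g^n}/n\geq\tau_\call(g)/\nu$.

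To upgrade this to the sharp lower bound, we use the hyperbolicity of $G$ to pick a \emph{fixed} element $h_0\in G$ (lying close to a translation axis of $g$) such that $\normL{g^n}$ and $\modL{h_0g^nh_0^{-1}}$ differ by only a bounded additive constant, uniformly in $n$. Since $G$ is hyperbolic and $h_0gh_0^{-1}$ has infinite order, the cyclic subgroup $\langle h_0gh_0^{-1}\rangle$ is $\call$-quasiconvex with constants depending only on $g$ and $(A,\call)$; by \Cref{thm:qconvex}(ii) there is an induced biautomatic structure $(B,\calm)$ on it and a constant $\kappa$ controlling the discrepancy between $\calm$- and $\call$-lengths. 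Feeding this into the analysis of biautomatic structures on $\ZZ$ that underlies the proof of \Cref{prop:stable-len} yields $\modL{h_0g^nh_0^{-1}}=n\tau_\call(g)+O(1)$ uniformly in $n$, which combined with the choice of $h_0$ gives $\normL{g^n}=n\tau_\call(g)+O(1)$ and hence the desired limit.

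\textbf{Main obstacle.} The delicate step is the existence of a fixed, bounded-length conjugator $h_0$ realising $\normL{g^n}=\modL{h_0g^nh_0^{-1}}+O(1)$ for all $n$; this relies on the hyperbolic-geometric fact that shortest conjugates of all powers $g^n$ lie within bounded $d_A$-distance of a single translation axis, so that no unbounded sequence of conjugators $h_n$ is required. Once this is available, the $\nu$-factor gap is closed by a routine application of the biautomatic machinery on the $\call$-quasiconvex cyclic subgroup $\langle h_0gh_0^{-1}\rangle$.
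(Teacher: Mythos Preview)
Your overall strategy matches the paper's: handle finite order trivially, observe the upper bound $\limsup_n\normL{g^n}/n\leq\tau_\call(g)$ is free, and reduce the lower bound to showing that the optimal conjugators $h_n$ realising $\normL{g^n}$ can be taken from a bounded (hence finite) set. Once that is known, the conclusion follows from \Cref{prop:stable-len} and \Cref{lem:tau-conjinvariant} applied to each of the finitely many $h\in\{h_n\}$.

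Two comments. First, the detour through the weak bound $\liminf_n\normL{g^n}/n\geq\tau_\call(g)/\nu$ is a dead end: it is correct but contributes nothing to closing the $\nu$-gap, and you never use it again. You can drop it entirely.

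Second, and more importantly, your ``Main obstacle'' paragraph correctly names the crux but does not actually resolve it---you assert the bounded-conjugator fact without indicating the mechanism. The paper's argument here is precisely the one you should supply: apply \Cref{lem:quasigeodesic} not to the finite paths $w_n^k$ but to the \emph{bi-infinite} paths $\gamma_n$ labelled $\cdots w_nw_nw_n\cdots$ passing through $h_n$. These are uniform $(\lambda,\varepsilon)$-quasi-geodesics, and since $\gamma_n$ contains the vertices $g^{mn}h_n$ while $\gamma_1$ contains $g^{mn}$, they share endpoints on $\partial G$. Morse stability then gives a uniform bound $\beta$ on their Hausdorff distance, and after normalising $h_n$ modulo $\langle g\rangle$ (replace $h_n$ by $h_ng^M$ so that $h_n$ is closest to $1_G$ among its $\langle g\rangle$-coset) one reads off $d_A(h_n,1_G)\leq\beta+|w_1|$. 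This is the substance of the ``hyperbolic-geometric fact'' you invoke; without it, the plan is incomplete. Once the finite set $C=\{h_n\}$ is in hand, you do not need to pass through quasi-convexity of cyclic subgroups or re-derive the $O(1)$ error term---simply writing $\normL{g^n}=\min_{h\in C}\modL{h^{-1}g^nh}$ and applying \Cref{prop:stable-len} and \Cref{lem:tau-conjinvariant} to each $h\in C$ finishes the argument, exactly as the paper does.
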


\begin{proof}
Fix $g \in G$. If $g$ has finite order, then the set $\{ \normL{g^n} \mid n \geq 1 \}$ is bounded, implying that $\normL{g^n}/n \to 0 = \tau_\call(g)$ as $n \to \infty$, as required. Therefore, we may without loss of generality assume that $g$ has infinite order. After replacing $g$ by its conjugate if necessary (we can do this by \Cref{lem:tau-conjinvariant}), we may assume $\normL{g} = |w_1|$ for some word $w_1 \in \mathcal{L}$ representing $g$.

For each $n \geq 1$, let $h_n \in G$ and $w_n \in \call$ be such that $w_n$ represents $h_n^{-1}g^nh_n$ and $\normL{g^n} = |w_n|$. By replacing $h_n$ with $h_ng^M$ for some $M = M(n) \in \ZZ$ if necessary, we may assume that $d_A(h_n,1_G) \leq d_A(h_n,g^m)$ for all $m \in \ZZ$ (when $n$ is fixed); in particular, we may take $h_1 = 1_G$. Let $\gamma_n$ be a bi-infinite path in $\cay(G,A)$ labelled by $\cdots w_n w_n w_n \cdots$ such that a sub-ray of $\gamma_n$ labelled by $w_n w_n w_n \cdots$ starts at the vertex $h_n$.

By \Cref{lem:quasigeodesic}, there exist constants $\lambda \geq 1$ and $\varepsilon \geq 0$ such that each $\gamma_n$ is a $(\lambda,\varepsilon)$-quasi-geodesic. Furthermore, since $\gamma_n$ and $\gamma_1$ contain vertices $g^{mn} h_n$ and $g^{mn}$, respectively, for all $m \in \ZZ$, it follows that $\gamma_n$ and $\gamma_1$ have the same endpoints on the boundary $\partial G$. Therefore, by \cite[Chapitre~3, Th\'eor\`eme~3.1]{CoornaertDelzantPapadopoulos1990}, there exists a constant $\beta \geq 0$ such that $\gamma_n$ is Hausdorff distance $\leq \beta$ away from $\gamma_1$ for each $n \in \ZZ$. See \Cref{fig:pf-lem-tau}.

Now since $h_n \in \gamma_n$, there exists a vertex $k_n \in \gamma_1$ such that $d_A(h_n,k_n) \leq \beta$. Since $\gamma_1$ is the union of the $\langle g \rangle$-translates of a path labelled by $w_1$ and starting at $1_G$, we have $d_A(g^M,k_n) \leq |w_1|$ for some $M = M(n) \in \ZZ$. But then the minimality of $d_A(h_n,1_G)$ implies that
\[
d_A(h_n,1_G) \leq d_A(h_n,g^M) \leq d_A(h_n,k_n) + d_A(g^M,k_n) \leq \beta+|w_1|.
\]
As $A$ is finite, it follows that the set $C := \{ h_n \mid n \geq 1 \}$ is finite.

Now for each $h \in C$ and $n \geq 1$, let $v_{n,h} \in \call$ be a word representing $h^{-1}g^nh \in G$. We then have $\normL{g^n} = \min \{ |v_{n,h}| \mid h \in C \}$. Furthermore, it follows from Proposition~\ref{prop:stable-len} that $|v_{n,h}|/n \to \tau_\call(hgh^{-1})$ and therefore, by Lemma~\ref{lem:tau-conjinvariant}, $|v_{n,h}|/n \to \tau_\call(g)$ as $n \to \infty$, for each $h \in C$. As $C$ is finite, we thus have $\normL{g^n}/n \to \tau_\call(g)$ as $n \to \infty$, as required.
\end{proof}

\begin{figure}
    \centering
    \begin{tikzpicture}
    \begin{scope}[very thick,decoration={markings,mark=at position 0.5 with {\arrow{>}}}]
        \draw [dashed] (0,0) node [left] {$\gamma_1\:$} arc (-120:-60:1) node (s) {};
        \draw (s.center) arc (-60:-30:1) node [above] (g0) {$1_G$};
        \draw [postaction=decorate] (g0.south) arc (-150:-30:1) node [midway,below,yshift=-3pt] {$w_1$} node [above] (g1) {$g$};
        \draw [postaction=decorate] (g1.south) arc (-150:-30:1) node [near start] (k) {} node [midway,below,yshift=-3pt] {$w_1$} node [above] (g2) {$g^2$};
        \draw (g2.south) arc (-150:-90:1) node (s1) {};
        \draw [loosely dotted] (s1.center) -- +(2,0) node (s2) {};
        \draw (s2.center) arc (-90:-30:1) node [above] (gn) {$g^n$};
        \draw [postaction=decorate] (gn.south) arc (-150:-30:1) node [midway,below,yshift=-3pt] {$w_1$} node [above] (gn1) {$g^{n+1}$};
        \draw (gn1.south) arc (-150:-115:1) node (s3) {};
        \draw [dashed] (s3.center) arc (-115:-55:1);
        \draw [dashed] (0,-1.5) node [left] {$\gamma_n\:$} arc (87:81:10.55) node (t) {};
        \draw (t.center) arc (81:70:10.55) node [below] (h) {$h_n$};
        \draw [postaction=decorate] (h.north) arc (110:70:10.55) node [midway,above] {$w_n$} node [below] (gnh) {$g^nh_n$};
        \draw (gnh.north) arc (110:108:10.55) node (t1) {};
        \draw [dashed] (t1.center) arc (108:102:10.55);
        \draw [green!70!black,dashed,semithick] (h.north) to[bend left] node [midway,right] {$\leq\beta$} (k.center);
    \end{scope}
    \end{tikzpicture}
    \caption{The proof of \Cref{lem:translation-lengths-same}.}
    \label{fig:pf-lem-tau}
\end{figure}

\section{Quasi-smoothing}
\label{sec:quasi-smoothing}

Throughout this section, we fix a hyperbolic group $G$ together with a (uniformly finite-to-one) biautomatic structure $(A,\mathcal{L})$ on $G$. We use the notation of \Cref{defn:biauto-notation}.

\begin{lemma} \label{lem:wordlengths}
Let $G$ be a hyperbolic group, and let $(A,\mathcal{L})$ be a finite-to-one biautomatic structure on $G$. Then there exists a constant $\xi \geq 0$ such that the following hold:
\begin{enumerate}[label=\textup{(\roman*)}]
    \item \label{it:len-glue} for all $g,h \in G$, we have $\modL{gh} \leq \modL{g} + \modL{h} + \xi$;
    \item \label{it:len-wiggle} for all $g,h \in G$, we have $\modL{gh} \leq \modL{g} + \xi|h|_A$ and $\modL{hg} \leq \modL{g} + \xi|h|_A$;
    \item \label{it:len-chop} for all $g,h \in G$ and $w \in \mathcal{L}$ such that $w$ represents $gh$ and a prefix of $w$ represents $g$, we have $\modL{g} + \modL{h} \leq \modL{gh} + \xi$.
\end{enumerate}
\end{lemma}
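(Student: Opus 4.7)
The plan is to derive the three inequalities in turn, using \Cref{thm:biauto-consts} throughout and invoking hyperbolicity of $G$ essentially only for (i). The main obstacle is (i): the estimate in (ii), combined with $|h|_A \leq \modL{h}$, only gives $\modL{gh} \leq \modL{g} + \nu \modL{h}$, which is multiplicative in $\modL{h}$, and upgrading to an additive bound is the crux.

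For (ii), I would write $h = a_1 \cdots a_k$ as a geodesic word of length $k = |h|_A$ and set $g_i := g a_1 \cdots a_i$. Since $g_{i-1}$ and $g_i$ differ by a single generator in $A$, \Cref{thm:biauto-consts}(ii) gives $\big|\modL{g_i} - \modL{g_{i-1}}\big| \leq \nu$, and telescoping yields $\modL{gh} \leq \modL{g} + \nu|h|_A$; the companion bound for $\modL{hg}$ follows symmetrically by prepending rather than appending generators. So $\xi = \nu$ suffices here. For (iii), decompose the given $\call$-word as $w = w_1 w_2$ with $w_1$ representing $g$ and $w_2$ representing $h$. Since $w_1$ is a prefix of $w \in \call$, \Cref{thm:biauto-consts}(i) produces $u_2 \in A^*$ with $|u_2| \leq \nu$ and $w_1 u_2 \in \call$, and this word represents $g\pi_A(u_2)$; applying (ii) to strip the correction gives $\modL{g} \leq |w_1| + \nu + \nu^2$. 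Symmetrically, using that $w_2$ is a suffix of $w$, one obtains $\modL{h} \leq |w_2| + \nu + \nu^2$. Summing and using $|w| \leq \modL{gh} + \nu$ (via \Cref{thm:biauto-consts}(ii) applied with $a=1$ to compare $w$ with a shortest $\call$-word for $gh$) yields $\modL{g} + \modL{h} \leq \modL{gh} + O(\nu^2)$, establishing (iii).

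For (i), I would exploit the hyperbolicity of $G$. Since $\call$-words are uniform $(\nu,\nu)$-quasi-geodesics in the $\delta$-hyperbolic Cayley graph $\cay(G,A)$ by \Cref{thm:biauto-consts}(iii), the Morse lemma shadows them within uniformly bounded Hausdorff distance of genuine geodesics. Combining the thin-triangles property for the triangle with vertices $1$, $g$, $gh$ with the two-sided fellow-traveler property of $\call$ and \Cref{thm:biauto-consts}(i)--(ii), one can splice shortest $\call$-words for $g$ and $h$ into a $\call$-word for $gh$ whose length exceeds $\modL{g} + \modL{h}$ by at most a universal constant depending only on $\nu$, the fellow-traveler constant $\zeta$, and the hyperbolicity constant $\delta$. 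Taking $\xi$ to be the maximum of the three constants produced then completes the proof.
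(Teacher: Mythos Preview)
Your proposal is correct and follows essentially the same approach as the paper: parts (ii) and (iii) are immediate from \Cref{thm:biauto-consts} exactly as you describe, and for (i) the paper likewise invokes hyperbolicity via slimness of the $(\nu,\nu)$-quasi-geodesic triangle with vertices $1,g,gh$ together with \Cref{thm:biauto-consts}(i)--(ii). Your sketch for (i) is the right outline, though in execution one does not manufacture a new $\call$-word for $gh$ by splicing; rather one takes a shortest $\call$-word $w$ for $gh$, splits it at a point within $\delta+2\beta$ of $g$, extends the two halves to $\call$-words via \Cref{thm:biauto-consts}(i), and bounds their lengths against $\modL{g}$ and $\modL{h}$ using \Cref{thm:biauto-consts}(ii).
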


\begin{proof}
Let $\nu$ be the constant given by \Cref{thm:biauto-consts}. Since $\cay(G,A)$ is hyperbolic, there exists a constant $\delta$ such that geodesic triangles in $\cay(G,A)$ are $\delta$-slim, and a constant $\beta$ such that any two $(\nu,\nu)$-quasi-geodesics with the same endpoints are Hausdorff distance $\leq \beta$ away from each other \cite[Chapitre~3, Th\'eor\`eme~1.2]{CoornaertDelzantPapadopoulos1990}. In particular, $(\nu,\nu)$-quasi-geodesic triangles in $\cay(G,A)$ are $(\delta+2\beta)$-slim. We set
\[
\xi := \nu(2\delta+4\beta+4\nu+3).
\]

\begin{enumerate}[label=\textup{(\roman*)}]

    \item Let $v_1,v_2,w \in \mathcal{L}$ be words representing $g$, $h$ and $gh$, respectively, such that $\modL{g} = |v_1|$, $\modL{h} = |v_2|$ and $\modL{gh} = |w|$. Let $\gamma,\zeta_1,\zeta_2 \subseteq \cay(G,A)$ be the paths from $1_G$ to $gh$ (respectively from $1_G$ to $g$, from $g$ to $gh$) labelled by $w$ (respectively $v_1$, $v_2$). Then these three paths form a $(\nu,\nu)$-quasi-geodesic triangle in $\cay(G,A)$, which must be $(\delta+2\beta)$-slim. Thus, if we write $\gamma = \gamma_1\gamma_2$ and $\zeta_1 = \zeta_{11} \zeta_{12}$ in such a way that the endpoints of $\gamma_1$ and $\zeta_{11}$ are distance $\leq \delta+2\beta$ apart and $\gamma_1$ is as long as possible, then we can write $\zeta_2 = \zeta_{21} \zeta_{22}$ in such a way that the starting points of $\gamma_2$ and $\zeta_{22}$ are distance $\leq \delta+2\beta+1$ apart.
    
    Let $w_1,w_2,v_1',v_2' \in A^*$ be the labels of $\gamma_1,\gamma_2,\zeta_{11},\zeta_{22}$, respectively. Then there exist words $u_1,u_2,t_1,t_2 \in A^*$, all of length $\leq \nu$, such that $w_1u_1,u_2w_2,v_1't_1,t_2v_2' \in \mathcal{L}$; see \Cref{subfig:lem-lengths-triangle}. It follows that the endpoints of the paths starting at $1_G$ and labelled by $w_1u_1$ and by $v_1't_1$ are distance $\leq \delta+2\beta+2\nu$ apart, implying that $\big| | w_1u_1 | - | v_1't_1 | \big| \leq \nu(\delta+2\beta+2\nu)$. Similarly, $\big| | u_2w_2 | - | t_2v_2' | \big| \leq \nu(\delta+2\beta+1+2\nu)$. It follows that
    \begin{align*}
        \modL{gh} &= |w| \leq |w_1u_1| + |u_2w_2| \leq |v_1't_1| + |t_2v_2'| + \nu(2\delta+4\beta+4\nu+1) \\
        &\leq |v_1'| + |v_2'| + \nu(2\delta+4\beta+4\nu+1) + 2\nu \leq |v_1| + |v_2| + \xi = \modL{g} + \modL{h} + \xi,
    \end{align*}
    as required.
    
    \item This is trivially true if $h = 1_G$. Otherwise, it is immediate from the choice of $\nu$ that $\modL{gh},\modL{hg} \leq \modL{g} + \nu|h|_A \leq \modL{g} + \xi|h|_A$.
    
    \item Let $w = v_1v_2$, so that $v_1$ and $v_2$ represent $g$ and $h$, respectively. Since $v_1$ and $v_2$ are a prefix and a suffix, respectively, of a word in $\mathcal{L}$, it follows that $v_1u_1,u_2v_2 \in \mathcal{L}$ for some $u_1,u_2 \in A^*$ with $|u_1|,|u_2| \leq \nu$; see \Cref{subfig:lem-lengths-bigon}. It then follows that $\modL{g} \leq |v_1u_1| + \nu|u_1|$ and $\modL{h} \leq |u_2v_2| + \nu|u_2|$. Moreover, we have $\big| \modL{gh} - |w| \big| \leq \nu$, implying that
    \begin{align*}
        \modL{g} + \modL{h} &\leq |v_1u_1| + |u_2v_2| + \nu(|u_1|+|u_2|) = |v_1| + |v_2| + (|u_1|+|u_2|)(\nu+1) \\
        &\leq |w| + 2\nu(\nu+1) \leq \modL{gh} + \nu + \nu(2\nu+2) \leq \modL{gh} + \xi,
    \end{align*}
    as required. \qedhere

\end{enumerate}
\end{proof}

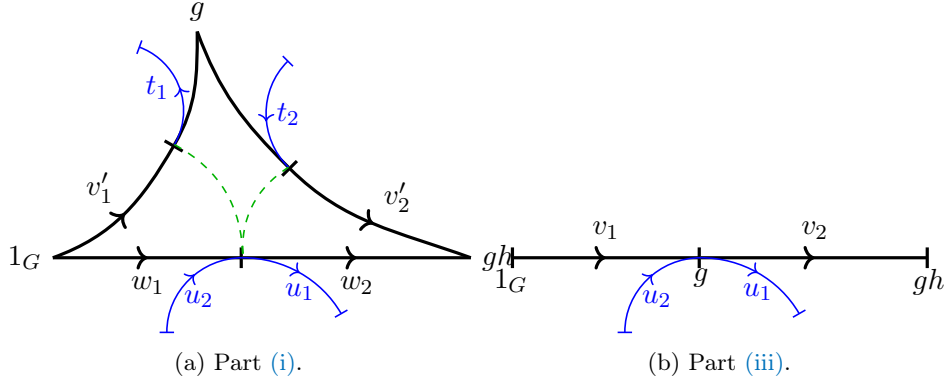
\begin{figure}
    \begin{subfigure}[b]{0.5\textwidth}
    \centering
    \begin{tikzpicture}
    \begin{scope}[very thick,decoration={markings,mark=at position 0.5 with {\arrow{>}}}]
        \draw [-|,postaction=decorate] (0,0) node [left] (id) {$1_G$} to[out=20,in=-120] node [midway,above left] {$v_1'$} (1.6,1.5) node (v1t1) {};
        \draw (v1t1.center) to[out=60,in=-90] (1.9,3) node [above] (g) {$g$};
        \draw (g.south) to[out=-70,in=135] (3.1,1.2) node (t2v2) {};
        \draw [|-,postaction=decorate] (t2v2.center) to[out=-45,in=160] node [midway,above right] {$v_2'$} (5.5,0) node [right] (gh) {$gh$};
        \draw [-|,postaction=decorate] (id.east) to node [midway,below,yshift=-3pt] {$w_1$} (2.5,0) node (w1w2) {};
        \draw [postaction=decorate] (w1w2.center) to node [midway,below,yshift=-3pt] {$w_2$} (gh.west);
        \draw [-|,semithick,blue,postaction=decorate] (v1t1.center) arc (-30:70:0.9) node [midway,left] {$t_1$};
        \draw [-|,semithick,blue,postaction=decorate] (w1w2.center) arc (90:30:1.5) node [midway,below] {$u_1$};
    \end{scope}
    \begin{scope}[very thick,decoration={markings,mark=at position 0.5 with {\arrow{<}}}]
        \draw [-|,semithick,blue,postaction=decorate] (t2v2.center) arc (225:135:1) node [midway,right] {$t_2$};
        \draw [-|,semithick,blue,postaction=decorate] (w1w2.center) arc (90:180:1) node [midway,below] {$\:\:\:u_2$};
    \end{scope}
    \draw [semithick,dashed,green!70!black] (v1t1.center) to[bend left] (w1w2.center) to[bend left] (t2v2.center);
    \end{tikzpicture}
    \caption{Part \ref{it:len-glue}.}
    \label{subfig:lem-lengths-triangle}
    \end{subfigure}%
    \begin{subfigure}[b]{0.5\textwidth}
    \centering
    \begin{tikzpicture}
    \begin{scope}[very thick,decoration={markings,mark=at position 0.5 with {\arrow{>}}}]
        \draw [|-|,postaction=decorate] (0,0) node [below] (id) {$1_G$} to node [midway,above,yshift=3pt] {$v_1$} (2.5,0) node [below] (g) {$g$};
        \draw [-|,postaction=decorate] (g.north) to node [midway,above,yshift=3pt] {$v_2$} (5.5,0) node [below] (gh) {$gh$};
        \draw [-|,semithick,blue,postaction=decorate] (g.north) arc (90:30:1.5) node [midway,below] {$u_1$};
    \end{scope}
    \begin{scope}[very thick,decoration={markings,mark=at position 0.5 with {\arrow{<}}}]
        \draw [-|,semithick,blue,postaction=decorate] (g.north) arc (90:180:1) node [midway,below] {$\:\:\:u_2$};
    \end{scope}
    \end{tikzpicture}
    \caption{Part \ref{it:len-chop}.}
    \label{subfig:lem-lengths-bigon}
    \end{subfigure}
    \caption{The proof of \Cref{lem:wordlengths}. The blue paths have length $\leq \nu$, and the green dashed lines have length $\leq \delta+2\beta+1$.}
    \label{fig:pf-lem-lengths}
\end{figure}

Now let $\Sigma$ be a closed orientable hyperbolic surface, let $G = \pi_1(\Sigma)$, and let $(A,\mathcal{L})$ be a biautomatic structure on $G$ as before. We may then identify $\ocurves{\Sigma}$ with the set of non-trivial conjugacy classes in $G$. Since the function $\normL{-}\colon G \to \RR$ is by definition invariant under conjugacy in $G$, it factors through a function $\ocurves{\Sigma} \to \RR$ which we also denote by $\normL{-}$. We aim to show that $\normL{-}\colon \ocurves{\Sigma} \to \RR$ satisfies the join and split quasi-smoothing properties: see \Cref{defn:quasi-smoothing}.

\begin{prop} \label{prop:quasi-smoothing}
The function $\normL{-}$ satisfies the join and split quasi-smoothing properties.
\end{prop}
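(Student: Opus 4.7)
The plan is to reduce both quasi-smoothing properties to \Cref{lem:wordlengths}, applied to carefully chosen representatives of the relevant conjugacy classes in $G = \pi_1(\Sigma)$. For the join property I would use the subadditivity estimate \ref{it:len-glue} applied to a product of ``geometrically adapted'' representatives, while for the split property I would use the prefix estimate \ref{it:len-chop} together with the wiggle estimate \ref{it:len-wiggle} applied to an optimal $\mathcal{L}$-word for the composite curve.

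\textbf{Join.} Given an essential crossing of $\widehat\gamma_1, \widehat\gamma_2$ at $p \in \Sigma$ producing the join $\gamma$, the plan is to pick a lift $\widetilde p \in \widetilde\Sigma$ of $p$ together with elements $g_i \in G$ representing $\gamma_i$ whose axes in $\widetilde\Sigma$ cross at $\widetilde p$ and satisfy $g_1g_2 \in \gamma$. By cocompactness of the $G$-action on $\widetilde\Sigma$, after simultaneously conjugating by a common element $h \in G$, I may assume that $\widetilde p$ lies within a fixed fundamental domain of the basepoint $\widetilde p_0$, so that both axes pass within a universal distance of $1_G$ in $\cay(G,A)$. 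The plan is then to establish that such ``adapted'' representatives satisfy an auxiliary bound $\modL{g_i} \leq \normL{\gamma_i} + C$ for a constant $C$ depending only on $\Sigma$ and $(A,\mathcal{L})$, and to combine it with \Cref{lem:wordlengths}\ref{it:len-glue} to conclude
\[ \normL{\gamma} \leq \modL{g_1g_2} \leq \modL{g_1} + \modL{g_2} + \xi \leq \normL{\gamma_1} + \normL{\gamma_2} + 2C + \xi. \]

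\textbf{Split.} For an essential self-crossing of $\widehat\gamma$ producing $\gamma_1, \gamma_2$, I would take an optimal representative $g \in G$ of $\gamma$ with $\modL{g} = \normL{\gamma}$, so that the axis of $g$ in $\cay(G,A)$ passes within bounded distance of $1_G$, and decompose $g = g_a g_b$ with $g_i$ representing $\gamma_i$ based at a lift $\widetilde p$ of the self-crossing chosen inside a fundamental domain around $\widetilde p_0$ on the axis of $g$. The key geometric observation is that $g_a\widetilde p$ is another lift of the self-crossing lying on the axis of $g$, so the vertex $g_a \in \cay(G,A)$ lies within bounded distance of this axis. By stability of quasi-geodesics in the hyperbolic group $G$, any word $w \in \mathcal{L}$ with $|w| = \modL{g}$ traces a $(\nu,\nu)$-quasi-geodesic from $1_G$ to $g$ fellow-travelling a geodesic along the axis, and hence passes within some uniform constant $D$ of the vertex $g_a$. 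Writing $w = w_1w_2$ where $w_1$ is the prefix ending at a vertex $k$ closest to $g_a$ and setting $c := k^{-1}g_a$ with $|c|_A \leq D$, \Cref{lem:wordlengths}\ref{it:len-chop} together with two applications of \ref{it:len-wiggle} yields
\[ \modL{g_a} + \modL{g_b} \leq \bigl(\modL{k}+\xi D\bigr) + \bigl(\modL{k^{-1}g}+\xi D\bigr) \leq \modL{g} + \xi(1+2D), \]
from which $\normL{\gamma_1}+\normL{\gamma_2} \leq \normL{\gamma}+\zeta$ for a uniform $\zeta$.

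\textbf{Main obstacle.} The hardest step is the auxiliary bound $\modL{g_i} \leq \normL{\gamma_i}+O(1)$ in the join case, where the bound needs to be \emph{additive} in order to be compatible with the subsequent use of \Cref{thm:MGT-main}. A naive comparison via the word metric $|\cdot|_A$ loses a multiplicative factor of $\nu$ coming from the $(\nu,\nu)$-quasi-geodesicity of $\mathcal{L}$-paths, which would only give $\modL{g_i} \leq \nu\normL{\gamma_i}+O(1)$ and hence a form of quasi-smoothing too weak to extend continuously to $\ocurrents{\Sigma}$. I expect the proof to sidestep this by exploiting the proper discontinuity of the $G$-action on $\widetilde\Sigma$: once $\widetilde p$ is confined to a fixed compact region around $\widetilde p_0$, only finitely many group elements can move $\widetilde p$ a bounded amount, and after normalising the conjugator modulo the centraliser of $g_i$ this should produce a conjugator of uniformly bounded $A$-length between $g_i$ and an optimal conjugate, whereupon a final application of \Cref{lem:wordlengths}\ref{it:len-wiggle} delivers the desired additive bound.
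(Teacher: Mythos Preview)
Your overall strategy is sound and your split argument is essentially the paper's, but your proposed resolution of the join obstacle does not work as stated. The claim that proper discontinuity plus normalising modulo the centraliser yields a conjugator of uniformly bounded $A$-length between $g_i$ and an optimal conjugate is false: if $g_i$ is primitive, the centraliser $\langle g_i \rangle$ acts on the axis with fundamental domain of diameter $\tau(g_i)$, so the shortest conjugator taking the axis of an optimal $g_0$ (which also passes near $1_G$, by \Cref{lem:quasigeodesic}) to the axis of $g_i$ has length comparable to $\tau(g_i) \sim \normL{\gamma_i}$, not bounded. The inequality $\modL{g_i} \leq \normL{\gamma_i} + C$ you need is nevertheless true, but the proof requires a different idea: use \Cref{lem:quasigeodesic} to get the bi-infinite $(\lambda,\varepsilon)$-quasi-geodesic $\cdots w_0 w_0 w_0 \cdots$ through $1_G$ for an optimal word $w_0$, translate it so its limit points match those of the axis of $g_i$, use the Morse lemma to find a point $h'$ on the translate with $|h'|_A$ bounded, and observe that $h'^{-1} g_i h' = \pi_A(u_2 u_1)$ for a cyclic permutation $u_2u_1$ of $w_0$. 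Now the chop estimate \ref{it:len-chop}---which you had earmarked only for the split case---is exactly what bounds $\modL{\pi_A(u_2 u_1)} \leq \normL{\gamma_i} + 2\xi$, after which two applications of \ref{it:len-wiggle} finish.

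The paper sidesteps this auxiliary lemma entirely. Rather than choosing group-element representatives $g_i$ and then comparing them to optimal conjugates, it realises the curves $\gamma_i$ \emph{from the outset} as $\theta$-images of loops $\sigma_i$ in the rose $V = \cay(G,A)/G$ labelled by the optimal words $w_i \in \call$. The crossing of $\theta\circ\sigma_1$ and $\theta\circ\sigma_2$ on $\Sigma$ pulls back to two points $x_1,x_2 \in V$ with $\theta(x_1)=\theta(x_2)$; since the lift $\widetilde\theta\colon \cay(G,A)\to\widetilde\Sigma$ is a quasi-isometry, these are joined by a path $\eta$ in $V$ of length at most $\lambda\varepsilon$. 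Splicing $\sigma_1,\sigma_2$ along $\eta$ produces a loop in $V$ whose label $w_{11} v w_{22} w_{21}\bar v w_{12}$ (with $w_i = w_{i1}w_{i2}$ and $|v|,|\bar v| \leq \lambda\varepsilon+2$) represents an element of $\gamma$, and \Cref{lem:wordlengths} (including \ref{it:len-chop}, applied to the genuine prefixes $w_{i1}$ of $w_i \in \call$) bounds its $\call$-length directly. This buys you a proof that never leaves the biautomatic structure: no axes, no Morse lemma, and no appeal to \Cref{lem:quasigeodesic}.
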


\begin{proof}
Let $V = \cay(G,A)/G$: that is, $V$ is a rose---a graph with one vertex---with one loop edge for each element of $A$. We will not distinguish pointed loops in $V$ from their pointed homotopy classes, allowing us to assign to each such loop a label $w \in A^*$. Let $\pi_V\colon \cay(G,A) \to V$ and $\pi_\Sigma\colon \widetilde\Sigma \to \Sigma$ be the canonical covering maps, and let $\theta\colon V \to \Sigma$ be a continuous map that sends each edge in $V$ to a pointed loop on $\Sigma$ labelled by the corresponding element of $A \subset G = \pi_1(\Sigma)$.

Since $\theta \circ \pi_V$ maps loops in $\cay(G,A)$ to nullhomotopic loops in $\Sigma$ and so induces a trivial map $\pi_1(\cay(G,A)) \to \pi_1(\Sigma)$, it follows that $\theta \circ \pi_V = \pi_\Sigma \circ \widetilde\theta$ for some map $\widetilde\theta\colon \cay(G,A) \to \widetilde\Sigma$. Moreover, $\widetilde\theta$ is clearly $G$-equivariant; since $V$ and $\Sigma$ are both compact and the $G$-action on $\widetilde\Sigma$ is properly discontinuous, it follows by the \v{S}varc--Milnor Lemma that $\widetilde\theta$ is a $(\lambda,\varepsilon)$-quasi-isometry for some $\lambda \geq 1$ and $\varepsilon \geq 0$, implying that the diameter of $\widetilde\theta^{-1}(\widetilde{x})$ is at most $\lambda\varepsilon$ for any $\widetilde{x} \in \widetilde\Sigma$. In particular, if $x_1,x_2 \in V$ are such that $\theta(x_1) = \theta(x_2)$, then there are lifts $\widetilde{x}_1 \in \pi_V^{-1}(x_1)$ and $\widetilde{x}_2 \in \pi_V^{-1}(x_2)$ such that $\widetilde\theta(\widetilde{x}_1) = \widetilde\theta(\widetilde{x}_2)$.  This implies that $d_{\cay(G,A)}(\widetilde{x}_1,\widetilde{x}_2) \leq \lambda\varepsilon$; therefore, if $\widetilde\gamma\colon [0,1] \to \cay(G,A)$ is a geodesic from $\widetilde{x}_1$ to $\widetilde{x}_2$, then $\gamma := \pi_V \circ \widetilde\gamma$ is a path in $V$ of length $\leq \lambda\varepsilon$ that is mapped (under $\theta$) to a nullhomotopic loop on $\Sigma$.

Let $\nu$ be the constant given by \Cref{thm:biauto-consts}, and let $\beta \geq 0$ be the constant such that any two $(\nu,\nu)$-quasi-geodesic paths in $\cay(G,A)$ with the same endpoints are Hausdorff distance $\leq \beta$ apart: such a $\beta$ exists by \cite[Chapitre~3, Th\'eor\`eme~1.2]{CoornaertDelzantPapadopoulos1990}. We set
\[
\zeta := \xi\max\{ 9+2\lambda\varepsilon, 7+2\beta+2\lambda\varepsilon \},
\]
where $\xi$ is the constant given in \Cref{lem:wordlengths}. We now prove the \ref{it:join-qs} join quasi-smoothing and \ref{it:split-qs} split quasi-smoothing properties.

\begin{enumerate}[label=(\roman*)]

    \item \label{it:join-qs} For $i \in \{1,2\}$, let $\gamma_i \in \ocurves{\Sigma}$, and let $w_i \in \mathcal{L}$ represent an element in the conjugacy class corresponding to $\gamma_i$ such that $|w_i| = \normL{\gamma_i}$; moreover, let $\sigma_i\colon \Sone \to V$ be the (pointed) loop on $V$ labelled by $w_i$, so that the loop $\widehat{\gamma_i} := \theta \circ \sigma_i$ is in the free homotopy class $\gamma_i$. Suppose $(y_1,y_2)$ is an essential crossing of $\widehat{\gamma_1}$ and $\widehat{\gamma_2}$, so that $\widehat{\gamma_1}(y_1) = \widehat{\gamma_2}(y_2)$, and let $\gamma \in \ocurves{\Sigma}$ be the path obtained by the join quasi-smoothing procedure as in \Cref{defn:quasi-smoothing}. We can thus write $\sigma_1 = \sigma_{11} \cdot \sigma_{12}$ and $\sigma_2 = \sigma_{21} \cdot \sigma_{22}$ for some $\sigma_{ij} \colon [0,1] \to V$, where we write $\sigma' \cdot \sigma''$ for concatenation of paths $\sigma'$ and $\sigma''$ (under some reparametrisation), so that $(\theta \circ \sigma_{11}) \cdot (\theta \circ \sigma_{22}) \cdot (\theta \circ \sigma_{21}) \cdot (\theta \circ \sigma_{12})$ is in the homotopy class $\gamma$.
    
    Let $x_i := \sigma_i(y_i) \in V$ for $i \in \{1,2\}$, so that $\theta(x_1) = \theta(x_2)$. Then, as explained above, there exists a path $\eta\colon [0,1] \to V$ from $x_1$ to $x_2$ of length $\leq \lambda\varepsilon$ such that the loop $\theta \circ \eta$ is nullhomotopic in $\Sigma$. It follows that $\theta \circ (\sigma_{11} \cdot \eta \cdot \sigma_{22} \cdot \sigma_{21} \cdot \overline\eta \cdot \sigma_{12})$ is a well-defined loop that is in the homotopy class $\gamma$, where $\overline\eta\colon [0,1] \to V$ can be taken to be the ``reverse'' of $\eta$. See \Cref{subfig:join-qs}.
    
    By adding or removing initial and terminal subpaths of length at most one to/from the paths $\sigma_{ij}$, $\eta$ and $\overline\eta$, we may modify our construction so that each of these paths start and end at the vertex of $V$. In particular, there exist paths $\sigma_{11}',\sigma_{12}',\sigma_{21}',\sigma_{22}',\eta',\overline\eta'\colon [0,1] \to V$, all starting and ending at the vertex of $V$, such that $\sigma_1 = \sigma_{11}' \cdot \sigma_{12}'$ and  $\sigma_2 = \sigma_{21}' \cdot \sigma_{22}'$, such that $\eta'$ and $\overline\eta'$ have length $\leq \lambda\varepsilon+2$, and such that $\sigma_{11}' \cdot \eta' \cdot \sigma_{22}' \cdot \sigma_{21}' \cdot \overline\eta' \cdot \sigma_{12}'$ is a well-defined loop that is mapped under $\theta$ to the homotopy class $\gamma$.
    
    Let $w_{11},w_{12},w_{21},w_{22},v,\overline{v} \in A^*$ be the labels of the paths $\sigma_{11}',\sigma_{12}',\sigma_{21}',\sigma_{22}',\eta',\overline\eta'$, respectively. We then have $w_1 = w_{11}w_{12} \in \mathcal{L}$, $w_2 = w_{21}w_{22} \in \mathcal{L}$, and $|v|,|\overline{v}| \leq \lambda\varepsilon+2$; moreover, the $G$-conjugacy class of $w_{11}vw_{22}w_{21}\overline{v}w_{12}$ corresponds to the homotopy class $\gamma$. If, given $u \in A^*$, we write $\modL{u}$ for $\modL{g}$, where $g \in G$ is the element represented by $u$, then Lemma~\ref{lem:wordlengths} implies that
    \begin{align*}
        \normL{\gamma} &\leq \modL{w_{11}vw_{22}w_{21}\overline{v}w_{12}} \leq \modL{w_{11}v} + \modL{w_{22}} + \modL{w_{21}\overline{v}} + \modL{w_{12}} + 3\xi \\
        &\leq \modL{w_{11}} + \modL{w_{22}} + \modL{w_{21}} + \modL{w_{12}} + (3+2(\lambda\varepsilon+2))\xi \\
        &\leq \modL{w_{11}w_{12}} + \modL{w_{21}w_{22}} + (3+2(\lambda\varepsilon+2)+2)\xi \leq \normL{\gamma_1} + \normL{\gamma_2} + \zeta,
    \end{align*}
    as required.
    
    \item \label{it:split-qs} Let $\gamma \in \ocurves{\Sigma}$, and let $w \in \mathcal{L}$ represent an element in the conjugacy class corresponding to $\gamma$ such that $|w| = \normL{\gamma}$; moreover, let $\sigma\colon \Sone \to V$ be the (pointed) loop on $V$ labelled by $w$, so that the loop $\widehat{\gamma} := \theta \circ \sigma$ is in the free homotopy class $\gamma$.
    Suppose $(y_1,y_2)$ is an essential self-crossing of $\widehat{\gamma}$, and let $\gamma_1,\gamma_2 \in \ocurves{\Sigma}$ be the paths obtained by the split quasi-smoothing procedure as in \Cref{defn:quasi-smoothing}. Similarly to the previous case (see \Cref{subfig:split-qs}), we may find paths $\sigma_1',\sigma_2',\sigma_3',\eta',\overline\eta' \colon [0,1] \to V$, all starting and ending at the vertex of $V$, such that $\sigma = \sigma_1' \cdot \sigma_2' \cdot \sigma_3'$, such that $\sigma_1' \cdot \eta' \cdot \sigma_3'$ and $\sigma_2' \cdot \overline\eta'$ are well-defined loops that are mapped (under $\theta$) to the free homotopy classes $\gamma_1$ and $\gamma_2$, respectively, and such that $\eta'$ and $\overline\eta'$ have length $\leq \lambda\varepsilon+2$.
    
    Let $w_1,w_2,w_3,v,\overline{v} \in A^*$ be the labels of the paths $\sigma_1',\sigma_2',\sigma_3',\eta',\overline\eta'$, respectively. It then follows that $w = w_1w_2w_3 \in \mathcal{L}$, that $|v|,|\overline{v}| \leq \lambda\varepsilon+2$, and that the $G$-conjugacy classes of $w_1vw_3$ and $w_2\overline{v}$ correspond to the homotopy classes $\gamma_1$ and $\gamma_2$, respectively. Now let $u \in \mathcal{L}$ be a word such that $u$ and $w_1w_2$ represent the same element of $G$. Since $u$ and $w_1w_2$ are both $(\nu,\nu)$-quasi-geodesic words, we can write $u = u_1u_2$ so that $u_1s$ and $w_1$ represent the same element of $G$ for some $s \in A^*$ with $|s| \leq \beta$; consequently, $s^{-1}u_2$ and $w_2$ also represent the same element of $G$. We then have
    \begin{align*}
        \normL{\gamma_1}+\normL{\gamma_2} &\leq \modL{w_1vw_3} + \modL{w_2\overline{v}} \leq \modL{w_1v} + \modL{w_3} + \modL{w_2\overline{v}} + \xi \\
        &\leq \modL{w_1} + \modL{w_3} + \modL{w_2} + (1+2(\lambda\varepsilon+2))\xi \\
        &= \modL{u_1s} + \modL{w_3} + \modL{s^{-1}u_2} + (5+2\lambda\varepsilon)\xi \\
        &\leq \modL{u_1} + \modL{w_3} + \modL{u_2} + (5+2\lambda\varepsilon+2\beta)\xi \\
        &\leq \modL{u} + \modL{w_3} + (5+2\lambda\varepsilon+2\beta+1)\xi \\
        &= \modL{w_1w_2} + \modL{w_3} + (6+2\lambda\varepsilon+2\beta)\xi \\
        &\leq \modL{w_1w_2w_3} + (6+2\lambda\varepsilon+2\beta+1)\xi \leq \normL{\gamma} + \zeta,
    \end{align*}
    as required. \qedhere

\end{enumerate}
\end{proof}

\begin{figure}
    \begin{subfigure}[b]{0.5\textwidth}
    \centering
    \begin{tikzpicture}
    \begin{scope}[very thick,decoration={markings,mark=at position 0.5 with {\arrow{>}}}]
        \draw [|-|,postaction=decorate] (0,0) arc (-180:0:1) node [midway,below,yshift=-3pt] {$\sigma_{11}$} node (s1) {};
        \draw [blue,postaction=decorate] (s1.center) arc (-120:-60:2) node [midway,below,yshift=-3pt] {$\eta$} node (s2) {};
        \draw [|-|,postaction=decorate] (s2.center) arc (-180:0:1) node [midway,below,yshift=-3pt] {$\sigma_{22}$} node (s3) {};
        \draw [postaction=decorate] (s3.center) arc (0:180:1) node [midway,above,yshift=3pt] {$\sigma_{21}$};
        \draw [blue,postaction=decorate] (s2.center) arc (60:120:2) node [midway,above,yshift=3pt] {$\overline\eta$};
        \draw [postaction=decorate] (s1.center) arc (0:180:1) node [midway,above,yshift=3pt] {$\sigma_{12}$};
        \draw [->,semithick] (3,-1.5) to node [midway,left] {$\theta$} (3,-2.5);
        \node at (3,-3.5) (x) {};
        \draw [blue,fill=blue!20,postaction=decorate] (x.center) to[out=130,in=90] (1.5,-3.5) node [left] {$\theta(\eta)$} to[out=-90,in=-130] (x.center);
        \draw [blue,fill=blue!20,postaction=decorate] (x.center) to[out=50,in=90] (4.5,-3.5) node [right] {$\theta(\overline\eta)$} to[out=-90,in=-50] (x.center);
        \draw (x.center) arc (150:205:1.5) node (s0a) {};
        \draw [white] (s0a.center) arc (-155:-145:1.5) node (s0b) {};
        \draw [-|,postaction=decorate] (s0b.center) arc (-145:-30:1.5) node [midway,below,yshift=-3pt] {$\theta(\sigma_{22})$} node (s2) {};
        \draw [postaction=decorate] (s2.center) arc (-30:150:1.5) node [midway,above,yshift=3pt] {$\theta(\sigma_{21})$};
        \draw [-|,postaction=decorate] (x.center) arc (30:210:1.5) node [midway,above,yshift=3pt] {$\theta(\sigma_{12})$} node (s1) {};
        \draw [postaction=decorate] (s1.center) arc (-150:-30:1.5) node [midway,below,yshift=-3pt] {$\theta(\sigma_{11})$} node (s0) {};
        \draw (s0.center) arc (-30:30:1.5);
        \fill [red] (x) circle (2.5pt);
    \end{scope}
    \end{tikzpicture}
    \caption{Join quasi-smoothing, \ref{it:join-qs}.}
    \label{subfig:join-qs}
    \end{subfigure}%
    \begin{subfigure}[b]{0.5\textwidth}
    \centering
    \begin{tikzpicture}
    \begin{scope}[very thick,decoration={markings,mark=at position 0.5 with {\arrow{>}}}]
        \draw [|-|,postaction=decorate] (0.5,0) arc (-180:-60:2.5 and 1) node [pos=0.55,below,yshift=-3pt] {$\sigma_1$} node (s1) {};
        \draw [-|,postaction=decorate] (s1.center) arc (-60:60:2.5 and 1) node [midway,right] {$\sigma_2$} node (s2) {};
        \draw [postaction=decorate] (s2.center) arc (60:180:2.5 and 1) node [pos=0.45,above,yshift=3pt] {$\sigma_3$};
        \draw [blue,postaction=decorate] (s2.center) to [bend left=20] node [midway,right] {$\overline\eta$} (s1.center);
        \draw [blue,postaction=decorate] (s1.center) to [bend left=60] node [midway,left] {$\eta$} (s2.center);
        \draw [->,semithick] (3,-1.5) to node [midway,left] {$\theta$} (3,-2.5);
        \node at (3,-3.5) (x) {};
        \draw [blue,fill=blue!20,postaction=decorate] (x.center) to[out=130,in=90] (1.5,-3.5) node [left] {$\theta(\eta)$} to[out=-90,in=-130] (x.center);
        \draw [blue,fill=blue!20,postaction=decorate] (x.center) to[out=50,in=90] (4.5,-3.5) node [right] {$\theta(\overline\eta)$} to[out=-90,in=-50] (x.center);
        \draw [-|,postaction=decorate] (x.center) arc (30:225:1.5) node [pos=0.8,right] {$\theta(\sigma_3)$} to [out=-45,in=180] (3,-6.42) node (t1) {};
        \draw [postaction=decorate] (x.center) to [out=-120,in=180] (3,-5.2) node [below,yshift=-3pt] {$\theta(\sigma_2)$} to [out=0,in=-60] (x.center);
    \end{scope}
    \begin{scope}[very thick,decoration={markings,mark=at position 0.5 with {\arrow{<}}}]
        \draw [postaction=decorate] (x.center) arc (150:-45:1.5) node [pos=0.8,left] {$\theta(\sigma_1)$} to [out=-135,in=0] (t1.center);
        \fill [red] (x) circle (2.5pt);
    \end{scope}
    \end{tikzpicture}
    \caption{Split quasi-smoothing, \ref{it:split-qs}.}
    \label{subfig:split-qs}
    \end{subfigure}
    \caption{The proof of \Cref{prop:quasi-smoothing}. The top pictures represent the situation in $V$, the bottom ones in $\Sigma$. The red point is the one at which the quasi-smoothing procedure is done, and the blue paths have length $\leq \lambda\varepsilon$.}
    \label{fig:pf-prop-qs}
\end{figure}
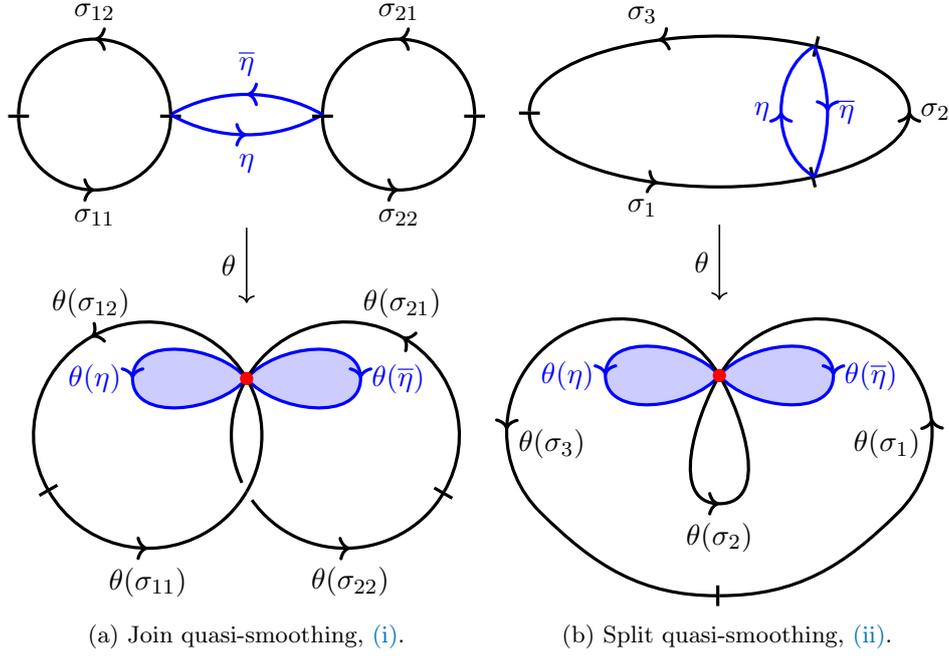

\section{Invariant measures} \label{sec:invariant}

In this section, we fix a closed orientable hyperbolic surface $\Sigma$ and let $G = \pi_1(\Sigma)$. We view $G$ as a uniform lattice in $\PSL_2(\RR) \cong \Isom^+(\widetilde\Sigma)$, the group of orientation-preserving isometries of the universal cover $\widetilde\Sigma \cong \Htwo$ of $\Sigma$.

Note that $\PSL_2(\RR)$ acts smoothly, freely and transitively on $T^1\widetilde\Sigma$, the unit tangent bundle of $\widetilde\Sigma$. We thus have a diffeomorphism $\PSL_2(\RR) \cong T^1\widetilde\Sigma$. Under this diffeomorphism, the $G$-action on $T^1\widetilde\Sigma$ corresponds to the $G$-action on $\PSL_2(\RR)$ by left multiplication, and the $\RR$-action on $T^1\widetilde\Sigma$ by translations along lifts of geodesic lines on $\widetilde\Sigma$ corresponds to the action of a subgroup $R \cong \RR$ on $\PSL_2(\RR)$ by right multiplication; see \cite[\S 1.8.3]{MartinezGranado2020}. After noticing that $\PSL_2(\RR)/R \cong \ogeod{\widetilde\Sigma}$, we can then identify $\ocurrents{\Sigma}$ with a certain space of measures on $\PSL_2(\RR)$, as follows.

\begin{prop}[Y.~Benoist and H.~Oh {\cite[Proposition~8.1]{BenoistOh2007}}] \label{prop:measures-PSL2R}
Let $\mathcal{G}'(\Sigma)$ be the space of Radon measures on $\PSL_2(\RR)$ that are $G$-invariant on the left and $R$-invariant on the right, equipped with the weak* topology. Then the map
\begin{align*}
    \ocurrents{\Sigma} &\to \mathcal{G}'(\Sigma), \\
    \mu &\mapsto \mu',
\end{align*}
where $\mu'(E) = \int \lambda_R(g^{-1}E \cap R) \dd\mu(gR)$ for a Borel subset $E \subseteq \PSL_2(\RR)$ and $\lambda_R$ is a left Haar measure on $R$, is a homeomorphism.
\end{prop}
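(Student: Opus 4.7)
The statement is the classical correspondence between Radon measures on the base and right-invariant Radon measures on the total space of the principal $R$-bundle $\pi\colon \PSL_2(\RR) \to \PSL_2(\RR)/R = \ogeod{\widetilde\Sigma}$, restricted to the $G$-left-invariant parts on each side. My plan is to verify that the forward map is well-defined and respects the invariance conditions, then construct an explicit inverse via the Riesz Representation Theorem (\Cref{thm:riesz-rep}).

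First, I would check that the integrand $gR \mapsto \lambda_R(g^{-1}E \cap R)$ descends to $\PSL_2(\RR)/R$: since $R \cong \RR$ is abelian, $\lambda_R$ is bi-invariant, so replacing $g$ by $gr_0$ with $r_0 \in R$ sends $g^{-1}E \cap R$ to $r_0^{-1}(g^{-1}E \cap R)$ and preserves $\lambda_R$-measure. For precompact $E \subseteq \PSL_2(\RR)$, the integrand is bounded, Borel-measurable, and supported on the precompact set $\pi(E)$, so $\mu'(E) < \infty$ and $\mu'$ is a Radon measure on $\PSL_2(\RR)$. The $G$-left-invariance of $\mu'$ follows from $G$-invariance of $\mu$ via the substitution $g' = h^{-1}g$ in the defining integral, and $R$-right-invariance of $\mu'$ follows from the translation invariance of $\lambda_R$, since $g^{-1}(Er) \cap R = (g^{-1}E \cap R)r$.

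The main work is constructing the inverse $\nu \mapsto \mu$. Given $h \in C_c(\ogeod{\widetilde\Sigma})$, pick a cut-off $\psi \in C_c(\PSL_2(\RR))$ with $\int_R \psi(gr) \dd\lambda_R(r) = 1$ for every $g$ with $\pi(g) \in \operatorname{supp}(h)$; such a $\psi$ exists by a partition-of-unity argument on the locally trivial $R$-bundle. Define the measure $\mu$ via the positive linear functional
\[
h \longmapsto \int_{\PSL_2(\RR)} \psi(g) \, h(\pi(g)) \dd\nu(g).
\]
The crucial point is independence from $\psi$: given two admissible cut-offs $\psi_1, \psi_2$, a Fubini computation using the $R$-right-invariance of $\nu$ (and the unimodularity of $R$) shows that
\[
\int \psi_1(g) h(\pi(g)) \dd\nu(g) = \int \psi_1(g) \left( \int_R \psi_2(gr) \dd\lambda_R(r) \right) h(\pi(g)) \dd\nu(g),
\]
and after swapping the order of integration and translating by $r^{-1}$ on the right, the roles of $\psi_1$ and $\psi_2$ interchange. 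Riesz then produces a Radon measure $\mu$ on $\ogeod{\widetilde\Sigma}$, and the same Fubini computation verifies that $\nu \mapsto \mu \mapsto \mu'$ recovers $\nu$ and that $\mu \mapsto \mu' \mapsto \mu$ recovers $\mu$. The $G$-invariance of $\mu$ transports directly from the $G$-left-invariance of $\nu$.

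Continuity in both directions in the weak-* topology is then a consequence of the Fubini framework: convergence $\mu_n \to \mu$ tested against a compactly supported continuous $h$ on $\ogeod{\widetilde\Sigma}$ pulls back to convergence of $\mu_n'$ tested against the compactly supported continuous function $g \mapsto \psi(g) h(\pi(g))$ on $\PSL_2(\RR)$, and conversely. The main obstacle will be the independence-of-cut-off step in the inverse construction, which is the technical heart of the bijection; everything else is essentially bookkeeping with Fubini's theorem and the symmetry properties of $\lambda_R$.
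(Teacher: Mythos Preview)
The paper does not supply its own proof of this proposition: it is quoted verbatim as \cite[Proposition~8.1]{BenoistOh2007} and used as a black box, with the text immediately moving on to ``Throughout this section, we will thus identify\ldots''. There is therefore nothing in the paper to compare your argument against.

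That said, your sketch is the standard Weil-style quotient-measure argument and is correct in outline. The forward map is well-defined for the reasons you give (bi-invariance of $\lambda_R$ since $R\cong\RR$ is abelian), and the inverse via a cut-off $\psi$ with $\int_R\psi(gr)\,\dd\lambda_R(r)=1$ on the relevant fibres is exactly how one constructs the push-forward to the quotient in the absence of a canonical section. The independence-of-cut-off computation you describe is the right one, and the weak-$*$ continuity follows as you say because testing $\mu$ against $h$ is the same as testing $\mu'$ against $\psi\cdot(h\circ\pi)$. One small point worth making explicit: to get continuity of the inverse $\nu\mapsto\mu$ you need that \emph{every} $f\in C_c(\PSL_2(\RR))$ can be approximated (or realised) by functions of the form $\psi\cdot(h\circ\pi)$ with controlled supports, or else argue directly that $\int f\,\dd\nu$ can be rewritten via Fubini as an integral against $\mu$; your current phrasing handles only the forward direction cleanly.
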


Throughout this section, we will thus identify $\ocurrents{\Sigma}$ with the space $\mathcal{G}'(\Sigma)$ in \Cref{prop:measures-PSL2R}. We will assume all the measures on $\PSL_2(\RR)$ in this section to be $R$-invariant on the right. We will also fix a left Haar measure $\lambda_\Sigma$ on $\PSL_2(\RR)$. We may rescale $\lambda_\Sigma$ so that $\iota(\gamma,\lambda_\Sigma)$ is equal to the length of the geodesic representative $\widehat{\gamma}\colon \Sone \to \Sigma$ for any $\gamma \in \ocurves{\Sigma}$: see \cite[\S 1.8.3]{MartinezGranado2020}.

Now let $\mu$ be a Radon measure on $\PSL_2(\RR)$ that is $G_0$-invariant for some finite index subgroup $G_0$ of $G$. We then construct a current $\widehat{\mu} \in \ocurrents{\Sigma}$ as follows. Let $g_1,\ldots,g_s$ be a right transversal of $G_0$ in $G$. Given a Borel subset $E \subseteq \PSL_2(\RR)$, we then set
\[
    \widehat{\mu}(E) := s^{-1} \sum_{i=1}^s \mu(g_iE).
\]
It is straightforward to check that $\widehat{\mu}$ is indeed $G$-invariant and does not depend on the choice of the right transversal $G_0$.

We consider the following special case. Let $\mu \in \ocurrents{\Sigma}$, and let $t \in \PSL_2(\RR)$ be such that $G_0 := t^{-1}Gt \cap G$ has finite index in $G$. Then the measure $\mu(t{-})$ is $G_0$-invariant. We define
$\mu^{(t)} := \widehat{\mu'}$, where $\mu' = \mu(t{-})$.

Given an element $t \in \PSL_2(\RR)$, we write $\monoid{G,t}$ for the submonoid of $\PSL_2(\RR)$ generated by $G \cup \{t\}$, and $\monoid{t}$ for the submonoid generated by $t$.

\begin{lemma} \label{lem:monoid-dense}
Let $t \in \PSL_2(\RR)$ be an elliptic isometry of $\Htwo$. If $\langle G,t \rangle$ is dense in $\PSL_2(\RR)$, then so is $\monoid{G,t}$.
\end{lemma}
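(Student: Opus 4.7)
The plan is to reduce the density of $\monoid{G,t}$ to the fact that every elliptic element of $\PSL_2(\RR)$ lies in a compact one-parameter subgroup; combined with the hypothesis that $\langle G,t\rangle$ is dense, the only real content is showing that $t^{-1}$ can be approximated by positive powers of $t$. Once that is established, a simple continuity-of-multiplication argument converts any word in $G\cup\{t,t^{-1}\}$ into a nearby word in $G\cup\{t\}$.

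\textbf{Step 1 (approximating $t^{-1}$ by positive powers).} I would first show that the identity $e$ lies in the closure of $\{t^n : n\geq 1\}$. If $t$ has finite order $m$, then $t^m = e$, so there is nothing to prove and moreover $t^{-1}=t^{m-1}\in\monoid{t}$. If $t$ has infinite order, then since $t$ is elliptic it is contained in a one-parameter elliptic subgroup $K<\PSL_2(\RR)$, which is a conjugate of $\mathrm{PSO}(2)$ and hence compact, isomorphic to $\mathbb{S}^1$. The closure $\overline{\langle t\rangle}$ is a closed subgroup of $K\cong\mathbb{S}^1$, so it is either finite or all of $K$; the infinite order of $t$ forces $\overline{\langle t\rangle}=K$. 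In a compact abelian group, a dense cyclic subgroup has dense positive-power semigroup: given any neighbourhood $U$ of $e$ in $K$, by density choose $m\in\ZZ$ with $t^m\in U$; then for any $N$, taking a positive integer $k$ with $km\geq N$ (adjusting sign if $m<0$ by replacing $t^m$ by a nearby $t^{m'}$ with $m'$ of the opposite sign, which exists by density) produces arbitrarily large positive $n$ with $t^n\in U$. In particular $e\in\overline{\{t^n:n\geq 1\}}$, and consequently $t^{-1}\in\overline{\{t^n:n\geq 1\}}\subseteq\overline{\monoid{t}}$.

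\textbf{Step 2 (approximating arbitrary elements).} Fix $g\in\langle G,t\rangle$, a left-invariant metric $d$ on $\PSL_2(\RR)$, and $\varepsilon>0$. Write $g=a_1a_2\cdots a_k$ with $a_i\in G\cup\{t,t^{-1}\}$, and let $i_1<\cdots<i_r$ be the positions of the occurrences of $t^{-1}$. Define the continuous map
\[
\Phi\colon \PSL_2(\RR)^r\to \PSL_2(\RR),\qquad \Phi(\xi_1,\ldots,\xi_r) = a_1\cdots a_{i_1-1}(t^{-1}\xi_1)a_{i_1+1}\cdots a_{i_r-1}(t^{-1}\xi_r)a_{i_r+1}\cdots a_k,
\]
so that $\Phi(e,\ldots,e)=g$. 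By continuity there is a neighbourhood $U$ of $e$ with $\Phi(U^r)\subseteq B(g,\varepsilon)$. By Step~1, pick $n\geq 1$ with $t^{n+1}\in U$; then $t^{-1}\cdot t^{n+1}=t^n$, so
\[
g' := \Phi(t^{n+1},\ldots,t^{n+1}) = a_1\cdots a_{i_1-1}\,t^n\, a_{i_1+1}\cdots a_{i_r-1}\,t^n\, a_{i_r+1}\cdots a_k
\]
lies in $\monoid{G,t}$ and satisfies $d(g,g')<\varepsilon$. Thus $\langle G,t\rangle\subseteq\overline{\monoid{G,t}}$; since $\langle G,t\rangle$ is dense in $\PSL_2(\RR)$, so is $\monoid{G,t}$.

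\textbf{Main obstacle.} The only genuinely non-formal step is Step~1, and even there the work is packaged entirely into the standard fact that in a compact monothetic group generated by an element of infinite order, the forward semigroup $\{t^n:n\geq 1\}$ is dense. Everything beyond that is a continuity argument for the word-map $\Phi$; the use of ellipticity is essential because it is what places $t$ in a compact subgroup, so that powers of $t$ accumulate---this would fail for a hyperbolic or parabolic element, where $t^n\to\infty$ in $\PSL_2(\RR)$ and $t^{-1}$ cannot be approximated by positive powers.
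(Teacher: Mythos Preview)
Your proposal is correct and follows essentially the same two-step strategy as the paper: first show that $t^{-1}$ lies in the closure of the positive powers $\{t^n : n\geq 1\}$ using that an elliptic element sits inside a compact circle subgroup, then use continuity of a word map to replace every occurrence of $t^{-1}$ by a nearby positive power. The paper's $\varphi$ is your $\Phi$ restricted to the diagonal.

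One small point: the explicit justification you give in Step~1 does not actually work as written. From $t^m\in U$ for some $m\in\ZZ$ you cannot conclude that $t^{km}\in U$ for large $k$, and the ``adjusting sign'' manoeuvre is not well-defined. What you need (and correctly name in your final paragraph) is the standard fact that the forward orbit of an infinite-order element in a compact group accumulates at the identity; the cleanest argument is pigeonhole/compactness: some subsequence $t^{n_i}$ converges, so $t^{n_{i+1}-n_i}\to e$ with $n_{i+1}-n_i\geq 1$. The paper simply cites Dirichlet's approximation theorem for the same conclusion. With that fix, your proof and the paper's are the same.
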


\begin{proof}
If $t$ has finite order ($m$, say), then we have $t^{-1} = t^{m-1} \in \monoid{t}$ and so $\langle G,t \rangle = \monoid{G,t}$. Therefore, without loss of generality we may assume that $t$ has infinite order.

Since $t$ is elliptic, it stabilises a point $x_0 \in \Htwo$. As $t$ has infinite order, the submonoid $\monoid{t}$ of $\Stab_{\PSL_2(\RR)}(x_0) \cong \Sone$ is infinite, and so dense in $\Stab_{\PSL_2(\RR)}(x_0)$ (by the Dirichlet's Approximation Theorem, for instance). In particular, for any open neighbourhood $U \subseteq \PSL_2(\RR)$ of $t^{-1}$, there exists $m \in \mathbb{N}$ such that $t^m \in U$.

Now let $V \subseteq \PSL_2(\RR)$ be open. Since $\langle G,t \rangle$ is dense, there exists $h \in \langle G,t \rangle$ such that $h \in V$. We can write $h = h_0 t^{-1} h_1 t^{-1} \cdots t^{-1} h_n$ for some $h_0,\ldots,h_n \in \monoid{G,t}$. Consider the map $\varphi\colon \PSL_2(\RR) \to \PSL_2(\RR)$ defined by $\varphi(g) = h_0 g h_1 g \cdots g h_n$, and note that $\varphi(t^{-1}) = h \in V$. Since the multiplication in $\PSL_2(\RR)$ is continuous, so is the map $\varphi$, implying that $\varphi^{-1}(V)$ is an open neighbourhood of $t^{-1}$ in $\PSL_2(\RR)$. But then $t^m \in \varphi^{-1}(V)$ for some $m \in \mathbb{N}$, implying that $\varphi(t^m) \in V \cap \monoid{G,t}$, and in particular that $V \cap \monoid{G,t} \neq \varnothing$. As $V$ was an arbitrary open subset, it follows that $\monoid{G,t}$ is dense in $\PSL_2(\RR)$, as required.
\end{proof}

\begin{lemma} \label{lem:t-invariant=>Liouville}
Let $t \in \PSL_2(\RR)$ be an element such that $G_0 := t^{-1}Gt \cap G$ has finite index in $G$, and such that the monoid $\monoid{G,t}$ is dense in $\PSL_2(\RR)$. Let $\mu \in \ocurrents{\Sigma}$ be a non-zero current such that $\mu^{(t)} = \mu$. Then $\mu = k \cdot \lambda_\Sigma$ for some $k > 0$.
\end{lemma}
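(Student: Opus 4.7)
The strategy is to upgrade $\mu$'s $G$-invariance on the left to full $\PSL_2(\RR)$-invariance on the left, whence uniqueness of the left Haar measure on $\PSL_2(\RR)$ (which in $\ocurrents{\Sigma}$ is represented by $\lambda_\Sigma$) forces $\mu = k\lambda_\Sigma$, with $k > 0$ because $\mu$ is non-zero. The key reduction is to establish the single invariance $m_t\mu = \mu$, where I write $m_h\mu(E) := \mu(hE)$: combining this with $G$-invariance and the identity $m_{h_1h_2} = m_{h_2}m_{h_1}$ inductively gives $m_h\mu = \mu$ for every $h \in \monoid{G,t}$. Since the map $h \mapsto m_h\mu$ is continuous in the weak* topology on Radon measures---verified by dominated convergence applied to the pairings $\int\varphi(h^{-1}x)\,d\mu(x)$ for compactly supported continuous test functions $\varphi$---the density of $\monoid{G,t}$ in $\PSL_2(\RR)$ provided by \Cref{lem:monoid-dense} extends this to $m_h\mu = \mu$ for every $h \in \PSL_2(\RR)$, completing the argument.

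To show $m_t\mu = \mu$, unpack $\mu^{(t)} = \mu$ via the definition to obtain the identity
\[
s\mu \;=\; \sum_{i=1}^s m_{tg_i}\mu
\]
as non-negative Radon measures on $\PSL_2(\RR)$. Taking the coset representative $g_1 = e \in G_0$, positivity of each summand forces $m_t\mu \le s\mu$, so $m_t\mu$ is absolutely continuous with respect to $\mu$ and may be written as $m_t\mu = f\mu$ with $0 \le f \le s$. Using the identity $m_g(h\mu) = (h \circ L_g)\mu$ for $g \in G$ (a consequence of $G$-invariance of $\mu$, where $L_g(x) = gx$) together with $m_{tg_i} = m_{g_i}m_t$, the original averaging identity translates into the pointwise constraint
\[
\sum_{i=1}^s f(g_ix) = s \quad \text{for $\mu$-a.e. } x,
\]
and $f$ inherits $G_0$-left-invariance and $R$-right-invariance from $m_t\mu$.

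The core of the argument is then to show $f \equiv 1$ almost everywhere. The plan is to pass to the Lebesgue decomposition $\mu = \mu_{\mathrm{ac}} + \mu_{\mathrm{sing}}$ with respect to the Haar measure $\lambda_\Sigma$: since every left translation preserves $\lambda_\Sigma$, the Hecke operator $\nu \mapsto \nu^{(t)}$ preserves this decomposition, so $\mu_{\mathrm{ac}}$ and $\mu_{\mathrm{sing}}$ are each Hecke-invariant individually. Writing $\mu_{\mathrm{ac}} = \rho\,\lambda_\Sigma$, the density $\rho$ descends to a $G$-invariant and $R$-right-invariant (hence geodesic-flow-invariant) function on $T^1\Sigma = G\backslash\PSL_2(\RR)$; ergodicity of the geodesic flow on the closed hyperbolic surface $\Sigma$ with respect to $\lambda_\Sigma$ forces $\rho$ to be a.e.\ constant, yielding $\mu_{\mathrm{ac}} = k\lambda_\Sigma$ for some $k \ge 0$. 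The principal obstacle is then ruling out $\mu_{\mathrm{sing}} \neq 0$: one must show that a Hecke-invariant current singular with respect to Liouville cannot exist, which is where the density of $\monoid{G,t}$ is invoked again---running the positivity analysis of Step~2 on $\mu_{\mathrm{sing}}$ in place of $\mu$ and then leveraging density to upgrade left-$\monoid{G,t}$-invariance forces the support of $\mu_{\mathrm{sing}}$ to be essentially conull in $T^1\Sigma$, contradicting $\lambda_\Sigma$-singularity. Once $\mu_{\mathrm{sing}} = 0$ is secured, $\mu = k\lambda_\Sigma$, so $m_t\mu = k\lambda_\Sigma = \mu$ by left-invariance of Haar, $f \equiv 1$, and the reduction to Step~1 closes.
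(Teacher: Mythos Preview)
Your overall target—upgrading $\mu$ to full left-$\PSL_2(\RR)$-invariance and invoking uniqueness of Haar—matches the paper's, and the reduction to $m_t\mu=\mu$ via weak*-continuity of $h\mapsto m_h\mu$ is sound. The gap is in ruling out $\mu_{\mathrm{sing}}$. From $s\,\mu_{\mathrm{sing}}=\sum_i m_{tg_i}\mu_{\mathrm{sing}}$ you can indeed deduce that the \emph{topological} support of $\mu_{\mathrm{sing}}$ is invariant under left multiplication by $\monoid{G,t}$, hence all of $\PSL_2(\RR)$ when nonempty. But full topological support is perfectly compatible with $\lambda_\Sigma$-singularity (e.g.\ a sum of point masses along a countable dense subset of $\RR$ has full support yet is Lebesgue-singular): you have conflated the closed support with the $\lambda_\Sigma$-null Borel carrier of the singular part. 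Note also that at this stage you only have $m_t\mu_{\mathrm{sing}}\le s\,\mu_{\mathrm{sing}}$, not $m_t\mu_{\mathrm{sing}}=\mu_{\mathrm{sing}}$, so genuine $\monoid{G,t}$-invariance of $\mu_{\mathrm{sing}}$ as a measure is not available—only the support-level statement is, and that is too weak. (Two minor points: density of $\monoid{G,t}$ is already a hypothesis here, so there is no need to invoke \Cref{lem:monoid-dense}, whose ellipticity assumption is absent; and if you ever did establish $\mu_{\mathrm{sing}}=0$ and $\mu_{\mathrm{ac}}=k\lambda_\Sigma$, the lemma would be proved outright, making the detour back through $m_t\mu=\mu$ redundant.)

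The paper bypasses any decomposition with a minimum-principle argument. For each compactly supported continuous $f$, set $I_f(x)=\int f\,d\mu(x{-})$: this is continuous and $G$-periodic, so descends to the compact space $T^1\Sigma$ and attains its infimum on a nonempty set $M_f$. The Hecke relation reads $I_f(x)=s^{-1}\sum_i I_f(tg_ix)$; at a minimiser $x$ each summand is $\ge I_f(x)$ while their average equals $I_f(x)$, forcing $tg_ix\in M_f$ for every $i$, in particular $tx\in M_f$. Thus $M_f$ is $\monoid{G,t}$-invariant, hence dense, hence all of $\PSL_2(\RR)$, so $I_f$ is constant. Riesz representation then gives $m_h\mu=\mu$ for all $h$, which is exactly the missing ingredient in your approach.
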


\begin{proof}
We aim to show that $\mu(h{-}) = \mu$ for all $h \in \PSL_2(\RR)$: this will imply the result by the uniqueness of the Haar measure.

Let $f\colon \PSL_2(\RR) \to \RR$ be a continuous function with compact support $K \subset \PSL_2(\RR)$, and consider the map $I_f\colon \PSL_2(\RR) \mapsto \RR$ given by $I_f(g) = \int f \dd\mu(g{-})$. Such a map $I_f$ is continuous: see \cite[Lemma~15 on p.~278]{Gaal1973} and its proof.

Now since $\mu$ is $G$-invariant, it follows that $I_f(gh) = I_f(h)$ whenever $g \in G$, implying that $I_f$ factors through the map $\PSL_2(\RR) \to G \backslash \PSL_2(\RR) \cong T^1\Sigma$. Since $T^1\Sigma$ is compact, so is the image of $I_f$, and so $I_f$ attains its infimum: that is, the set
\[
M_f := \{ x \in \PSL_2(\RR) \mid I_f(x) \leq I_f(y) \text{ for all } y \in \PSL_2(\RR) \}
\]
is non-empty.

Now let $g_1,\ldots,g_s$ be a right transversal of $G_0$ in $G$ with $g_1=1$. We then have $\mu(E) = \mu^{(t)}(E) = s^{-1} \sum_{i=1}^s \mu(tg_iE)$ for any Borel subset $E$. In particular, it follows that for any $x \in \PSL_2(\RR)$,
\begin{align*}
I_f(x) &= \int f \dd\mu(x{-}) = s^{-1} \sum_{i=1}^s \int f \dd\mu(tg_ix{-}) = s^{-1} \sum_{i=1}^s I_f(tg_ix).
\end{align*}
Therefore, if $x \in M_f$ then $I_f(tg_ix) = I_f(x)$ for all $i$.  In particular, $I_f(tx) = I_f(x)$: that is, $tx \in M_f$.

Thus, if $x \in M_f$, then $tx \in M_f$ and $gx \in M_f$ for all $g \in G$, implying that $\monoid{G,t} M_f \subseteq M_f$. As $\monoid{G,t}$ is dense in $\PSL_2(\RR)$ and $M_f \neq \varnothing$, it follows that $M_f$ is also dense; as $I_f$ is continuous, this implies that $I_f$ is actually constant on $\PSL_2(\RR)$. But since $f$ was arbitrary, it follows from \Cref{thm:riesz-rep} that $\mu(h{-}) = \mu$ for all $h \in \PSL_2(\RR)$, as required.
\end{proof}

\begin{lemma} \label{lem:somethings-same-intersection}
Let $t \in \PSL_2(\RR)$ be an element such that $G_0 := t^{-1}Gt \cap G$ has finite index in $G$, and let $\mu \in \ocurrents{\Sigma}$. Then $\iota(\mu^{(t)},\lambda_\Sigma) = \iota(\mu,\lambda_\Sigma)$.
\end{lemma}

\begin{proof}
Let $g_1,\ldots,g_s$ be a right transversal of $G_0$ in $G$ with $g_1=1$. For $1 \leq i \leq s$, let $\Sigma_i \to \Sigma$ be the finite covering map corresponding to the subgroup $g_i^{-1}G_0g_i \leq G$, and let $\Sigma_0 \to \Sigma$ be the finite covering map corresponding to the subgroup $tG_0t^{-1} \leq G$. Then the element $tg_i \in \PSL_2(\RR)$ induces an isometry $\varphi_i\colon \Sigma_i \to \Sigma_0$, and also a diffeomorphism $\widetilde\varphi_i\colon \PSL_2(\RR) \to \PSL_2(\RR)$ such that $\mu' \circ \widetilde\varphi_i = \mu'(tg_i-) \in \ocurrents{\Sigma_i}$ for any $\mu' \in \ocurrents{\Sigma_0}$. Note that we have $[G:tG_0t^{-1}] = [G:g_i^{-1}G_0g_i] = s$ for all $i$, since the surfaces $\Sigma_0,\Sigma_1,\ldots,\Sigma_s$ are pairwise isometric (and therefore have the same genus) and since $[G:G_0]=s$.

Now since $\lambda_\Sigma$ is a left Haar measure, we have $\lambda_\Sigma = \lambda_\Sigma(tg_i-) = \lambda_\Sigma \circ \widetilde\varphi_i$ for all $i$. It then follows by \Cref{lem:iota-covers} that
\begin{align*}
    \iota_\Sigma(\mu,\lambda_\Sigma) &= s^{-1} \iota_{\Sigma_0}(\mu,\lambda_\Sigma) = s^{-2}\sum_{i=1}^s \iota_{\Sigma_0}(\mu,\lambda_\Sigma) = s^{-2}\sum_{i=1}^s \iota_{\Sigma_i}(\mu \circ \widetilde\varphi_i,\lambda_\Sigma \circ \widetilde\varphi_i) \\
    &= s^{-2}\sum_{i=1}^s \iota_{\Sigma_i}(\mu(tg_i-),\lambda_\Sigma) = s^{-1}\sum_{i=1}^s (s \cdot [g_i^{-1}G_0g_i:\widehat{G}])^{-1} \iota_{\widehat\Sigma}(\mu(tg_i-),\lambda_\Sigma) \\
    &= [G:\widehat{G}]^{-1} \iota_{\widehat\Sigma}\left( {\textstyle s^{-1}\sum_{i=1}^s \mu(tg_i-)},\lambda_\Sigma \right) = [G:\widehat{G}]^{-1} \iota_{\widehat\Sigma}(\mu^{(t)},\lambda_\Sigma) = \iota_\Sigma(\mu^{(t)},\lambda_\Sigma),
\end{align*}
where $\widehat{G} = \bigcap_{i=1}^s g_i^{-1}G_0g_i$ and $\widehat\Sigma \to \Sigma$ is the finite cover corresponding to $\widehat{G} \leq G$. Thus $\iota(\mu^{(t)},\lambda_\Sigma) = \iota(\mu,\lambda_\Sigma)$, as required.
\end{proof}

\begin{lemma} \label{lem:somethings-t-invariant}
Let $t \in \PSL_2(\RR)$ be an element such that $G_0 := t^{-1}Gt \cap G$ has finite index in $G$. Let $\mu \in \ocurrents{\Sigma}$ be a non-zero current, and define $(\mu_n)_{n=0}^\infty \subset \ocurrents{\Sigma}$ inductively by $\mu_0 = \mu$ and $\mu_n = \mu_{n-1}^{(t)}$ for $n \geq 1$. Then the closure of $\left\{ \sum_{i=0}^n c_i\mu_i \mid n \geq 0, c_i \in [0,\infty) \right\}$ in $\ocurrents{\Sigma}$ contains a non-zero current $\overline\mu$ such that $\overline\mu^{(t)} = \overline\mu$.
\end{lemma}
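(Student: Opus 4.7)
The plan is a Ces\`aro-averaging argument. Set $T \colon \ocurrents{\Sigma} \to \ocurrents{\Sigma}$ by $T(\mu) := \mu^{(t)}$, and define $\nu_N := \frac{1}{N+1}\sum_{i=0}^N \mu_i$; by construction each $\nu_N$ lies in the set whose closure we study. First observe that $T$ is additive, positively homogeneous, and continuous in the weak-$*$ topology, since both constituent operations $\mu \mapsto \mu(t{-})$ (pullback under a homeomorphism of $\ogeod{\widetilde\Sigma}$) and $\nu \mapsto \widehat\nu$ (a finite weighted sum of translates) are.

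Next, introduce the ``length functional'' $F \colon \ocurrents{\Sigma} \to [0,\infty)$, $F(\mu) := \iota_\Sigma(\lambda_\Sigma, \mu)$. By \Cref{thm:bonahon-cts}, $F$ is continuous and positively linear; and since $\lambda_\Sigma$ is filling, \Cref{prop:bonahon-filling} ensures $F(\mu) > 0$ for every non-zero $\mu$ and that each sublevel set $F^{-1}([0,C])$ is compact. The crux of the argument is the invariance $F \circ T = F$, which I would establish as follows. Let $k := [G:G_0]$, set $G_0' := tGt^{-1} \cap G$ (so also $[G:G_0'] = k$ and $tG_0't^{-1} = G_0$), and consider the intermediate $k$-sheeted covers $\Sigma' = G_0\backslash\widetilde\Sigma \to \Sigma$ and $\Sigma'' = G_0'\backslash\widetilde\Sigma \to \Sigma$. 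The isometry $x \mapsto tx$ of $\widetilde\Sigma$ descends to an isometry $\Sigma'' \to \Sigma'$ that, at the level of currents, identifies the $G_0'$-invariant current $\mu$ with the $G_0$-invariant current $\mu(t{-})$. Combining isometry-invariance of $\iota$ with \Cref{lem:iota-covers} then yields $\iota_{\Sigma'}(\lambda_\Sigma, \mu(t{-})) = \iota_{\Sigma''}(\lambda_\Sigma, \mu) = k \cdot F(\mu)$, while a short direct computation with fundamental domains (using that $\lambda_\Sigma$ is $\PSL_2(\RR)$-invariant) gives $\iota_\Sigma(\lambda_\Sigma, \widehat\nu) = \frac{1}{k}\iota_{\Sigma'}(\lambda_\Sigma, \nu)$ for every $G_0$-invariant $\nu$. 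Setting $\nu = \mu(t{-})$ produces $F(\mu^{(t)}) = F(\mu)$.

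Granted this invariance, an easy induction gives $F(\mu_n) = F(\mu)$ for all $n \geq 0$, whence by positive linearity $F(\nu_N) = F(\mu)$ for all $N$. Therefore $(\nu_N)$ lies inside the compact set $F^{-1}([0,F(\mu)])$, and a subsequence $\nu_{N_j}$ converges to some $\overline\mu \in \ocurrents{\Sigma}$. Continuity of $F$ gives $F(\overline\mu) = F(\mu) > 0$, so $\overline\mu \neq 0$. Moreover, since $(\mu_n)$ lies in the weak-$*$ compact (hence bounded) set $F^{-1}(\{F(\mu)\})$, we have
\[
T(\nu_N) - \nu_N \;=\; \frac{\mu_{N+1} - \mu_0}{N+1} \;\xrightarrow[N \to \infty]{}\; 0
\]
in the weak-$*$ topology. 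Continuity of $T$ then forces $T(\overline\mu) = \lim_j T(\nu_{N_j}) = \lim_j \nu_{N_j} = \overline\mu$, as required.

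I expect the principal obstacle to be the invariance $F \circ T = F$, which requires carefully tracking how pullback by $t$ and averaging over a transversal for $G_0 \leq G$ interact with intersection numbers across the two intermediate covers $\Sigma'$ and $\Sigma''$; the remainder of the argument is a standard Ces\`aro-averaging (Krylov--Bogolyubov style) recipe.
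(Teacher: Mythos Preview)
Your proposal is correct and follows essentially the same Ces\`aro-averaging strategy as the paper: both establish $\iota(-,\lambda_\Sigma)$-invariance under $T$, use Bonahon's compactness (\Cref{prop:bonahon-filling}) to extract a subsequential limit $\overline\mu$ of the averages, and then verify $T(\overline\mu)=\overline\mu$ via the telescoping identity $T(\nu_N)-\nu_N=\frac{\mu_{N+1}-\mu_0}{N+1}$. The only notable difference is in bounding the right-hand side: you observe that the $\mu_n$ sit in the weak-$*$ compact set $F^{-1}(\{F(\mu)\})$ so that $\int f\,d\mu_n$ is uniformly bounded for each test function $f$, whereas the paper proves the explicit estimate $\mu_n(K)\le\mu(DK)$ using a fundamental domain $D$ for $G$ --- both work, and your route is arguably a little cleaner.
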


\begin{proof}

Consider the sequence $(\overline\mu_n)_{n=1}^\infty \subset \ocurrents{\Sigma}$, where $\overline\mu_n = n^{-1} \sum_{i=0}^{n-1} \mu_i$. By \Cref{lem:somethings-same-intersection}, we have $\iota(\mu_{n-1},\lambda_\Sigma) = \iota(\mu_{n-1}^{(t)},\lambda_\Sigma) = \iota(\mu_n,\lambda_\Sigma)$ for all $n \geq 1$, and therefore $\iota(\mu_n,\lambda_\Sigma) = \iota(\mu,\lambda_\Sigma)$ by induction on $n$. It follows that $\iota(\overline\mu_n,\lambda_\Sigma) = \iota(\mu,\lambda_\Sigma)$ for all $n \geq 1$. But by \Cref{prop:bonahon-filling}, the subspace $\{ \mu' \in \ocurrents{\Sigma} \mid \iota(\mu',\lambda_\Sigma) \leq \iota(\mu,\lambda_\Sigma) \}$ of $\ocurrents{\Sigma}$ is compact, implying that the sequence $(\overline\mu_n)_{n=1}^\infty$ has a convergent subsequence: $\overline\mu_{n_m} \to \overline\mu$ as $m \to \infty$, say. Note that we have $\iota(\overline\mu,\lambda_\Sigma) = \lim_{m \to \infty} \iota(\overline\mu_{n_m},\lambda_\Sigma) = \iota(\mu,\lambda_\Sigma)$ since $\iota(-,\lambda_\Sigma)$ is continuous (by \Cref{thm:bonahon-cts}), whereas $\iota(\mu,\lambda_\Sigma) > 0$ since $\mu \neq 0$ and $\lambda_\Sigma$ is filling (by \Cref{prop:bonahon-filling}), so $\overline\mu \neq 0$. We aim to show that $\overline\mu^{(t)} = \overline\mu$.

Let $g_1,\ldots,g_s$ be a right transversal of $G_0$ in $G$ with $g_1=1$, and for $1 \leq i \leq s$, let $\Sigma_i \to \Sigma$ be the finite covering map corresponding to the subgroup $g_i^{-1}G_0g_i \leq G$. Note that since $\overline\mu_{n_m} \to \overline\mu$ as $m \to \infty$, we also have $\overline\mu_{n_m}(tg_i{-}) \to \overline\mu(tg_i{-})$ in $\ocurrents{\Sigma_i}$, and therefore $\overline\mu_{n_m}^{(t)} \to \overline\mu^{(t)}$ as $m \to \infty$. We aim to show that we also have $\overline\mu_{n_m}^{(t)} \to \overline\mu$ as $m \to \infty$; this will imply that $\int f \dd\overline\mu = \int f \dd\overline\mu^{(t)}$ for every continuous function $f\colon \PSL_2(\RR) \to \RR$ with compact support, and the result will then follow by \Cref{thm:riesz-rep}.

Let $D$ be a (relatively compact) fundamental domain for the action of $G$ on $\PSL_2(\RR)$ by left multiplication, and let $K \subset \PSL_2(\RR)$ be compact. We claim that $\mu_n(K) \leq \mu(DK)$ for all $n \geq 0$. Indeed, since we have $\mu_n = \mu_{n-1}^{(t)} = s^{-1} \sum_{i=1}^s \mu_{n-1}(tg_i-)$ for all $n \geq 1$ and since $\mu_0 = \mu$, it follows by induction on $n$ that $\mu_n = s^{-n} \sum_{i=1}^{s^n} \mu(h_i{-})$ for some $h_1,\ldots,h_{s^n} \in \PSL_2(\RR)$. We can pick some $k_1,\ldots,k_{s^n} \in G$ such that $k_ih_i \in D$ for each $i$. Note that $\mu(k_ih_i{-}) = \mu(h_i{-})$ since $\mu$ is $G$-invariant; therefore,
\[
\mu_n(K) = s^{-n} \sum_{i=1}^{s^n} \mu(k_ih_iK) \leq s^{-n} \sum_{i=1}^{s^n} \mu(DK) = \mu(DK),
\]
as claimed.

Now let $f\colon \PSL_2(\RR) \to \RR$ be a continuous function with compact support $K$. Since $\overline\mu_n^{(t)} - \overline\mu_n = n^{-1}(\mu_n-\mu)$ for any $n \geq 1$, we have
\begin{align*}
\left| \int f \dd\overline\mu_n^{(t)} - \int f \dd\overline\mu_n \right| &= n^{-1}\left| \int f \dd\mu_n - \int f \dd\mu \right| \leq n^{-1}\left( \left| \int f \dd\mu_n \right| + \left| \int f \dd\mu \right| \right) \\ &\leq n^{-1} \left\| f \right\|_\infty \left( \mu_n(K) + \mu(K) \right) \leq n^{-1} \left\| f \right\|_\infty \left( \mu(DK) + \mu(K) \right),
\end{align*}
and therefore $\left| \int f \dd\overline\mu_n^{(t)} - \int f \dd\overline\mu_n \right| \to 0$ as $n \to \infty$. On the other hand, since $\overline\mu_{n_m} \to \overline\mu$ we have $\left| \int f \dd\overline\mu_{n_m} - \int f \dd\overline\mu \right| \to 0$ as $m \to \infty$. Since $n_m \to \infty$ as $m \to \infty$, it follows that $\left| \int f \dd\overline\mu_{n_m}^{(t)} - \int f \dd\overline\mu \right| \to 0$ as $m \to \infty$. But as $f$ was arbitrary, it follows that indeed $\overline\mu_{n_m}^{(t)} \to \overline\mu$ (in the weak* topology) as $m \to \infty$, as required.
\end{proof}

\begin{prop}\label{prop.criteria}
Let $t \in \PSL_2(\RR)$ be an elliptic isometry of $\Htwo$ such that $t^{-1} G t \cap G$ has finite index in $G$ and such that $\langle G,t \rangle$ is dense in $\PSL_2(\RR)$. Let $F\colon \ocurrents{\Sigma} \to [0,\infty)$ be a continuous positively linear function such that $F(\gamma^{(t)}) = F(\gamma)$ for all $\gamma \in \ocurves{\Sigma}$. Then $F = k \cdot \iota(-,\lambda_\Sigma)$ for some $k \geq 0$.
\end{prop}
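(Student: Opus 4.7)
The plan is to use the averaging construction from \Cref{lem:somethings-t-invariant} together with the rigidity statement \Cref{lem:t-invariant=>Liouville}. Starting from an arbitrary non-zero current $\mu$, I would build a $t$-fixed current $\overline\mu$ in the closure of positive combinations of iterated $t$-pushforwards of $\mu$, conclude $\overline\mu$ is a multiple of the Haar measure $\lambda_\Sigma$, and then compare values of $F$ and $\iota(-,\lambda_\Sigma)$ at $\mu$ and $\overline\mu$ using the fact that both functionals are constant along the averaging sequence.

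First I would extend the $t$-invariance of $F$ from curves to all currents. Both $F$ and $F \circ (-)^{(t)}$ are continuous and positively linear on $\ocurrents{\Sigma}$: positive linearity of $(-)^{(t)}$ is immediate from its definition as $s^{-1}\sum_{i=1}^s \mu(tg_i\,{-})$, and continuity with respect to the weak${}^*$ topology follows by the same pushforward argument used inside the proof of \Cref{lem:somethings-t-invariant}. By hypothesis these two functionals agree on $\ocurves{\Sigma}$, so by positive linearity they agree on the positive cone $\RR\ocurves{\Sigma}$, and by continuity together with density of $\RR\ocurves{\Sigma}$ in $\ocurrents{\Sigma}$ (\cite[Proposition~2]{Bonahon1988}) they agree on all of $\ocurrents{\Sigma}$. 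Thus $F(\mu^{(t)}) = F(\mu)$ for every $\mu \in \ocurrents{\Sigma}$.

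Next, fix a non-zero $\mu \in \ocurrents{\Sigma}$ and form the sequence $\mu_0 = \mu$, $\mu_i = \mu_{i-1}^{(t)}$, together with the Ces\`aro averages $\overline\mu_n = n^{-1}\sum_{i=0}^{n-1} \mu_i$. Since $t^{-1}Gt \cap G$ has finite index in $G$, \Cref{lem:somethings-t-invariant} applies and yields a subsequence $\overline\mu_{n_m} \to \overline\mu$ with $\overline\mu \neq 0$ and $\overline\mu^{(t)} = \overline\mu$. By \Cref{lem:monoid-dense} the density of $\langle G,t\rangle$ in $\PSL_2(\RR)$ promotes to density of $\monoid{G,t}$, so \Cref{lem:t-invariant=>Liouville} applies to give $\overline\mu = k_\mu \cdot \lambda_\Sigma$ for some $k_\mu > 0$.

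Finally I would compare $F$ and $\iota(-,\lambda_\Sigma)$ at $\mu$ and $\overline\mu$. By the first step and positive linearity, $F(\mu_i) = F(\mu)$ for every $i \geq 0$, hence $F(\overline\mu_n) = F(\mu)$ for every $n$; the computation in the proof of \Cref{lem:somethings-t-invariant} similarly gives $\iota(\overline\mu_n,\lambda_\Sigma) = \iota(\mu,\lambda_\Sigma)$. Passing to the limit along the subsequence $(\overline\mu_{n_m})$ and using continuity of $F$ and of $\iota(-,\lambda_\Sigma)$ (\Cref{thm:bonahon-cts}), I obtain
\[
F(\mu) \;=\; F(\overline\mu) \;=\; k_\mu \cdot F(\lambda_\Sigma), \qquad \iota(\mu,\lambda_\Sigma) \;=\; \iota(\overline\mu,\lambda_\Sigma) \;=\; k_\mu \cdot \iota(\lambda_\Sigma,\lambda_\Sigma).
\]
Since $\lambda_\Sigma$ is filling, $\iota(\lambda_\Sigma,\lambda_\Sigma) > 0$ by \Cref{prop:bonahon-filling}, so the constant $k := F(\lambda_\Sigma)/\iota(\lambda_\Sigma,\lambda_\Sigma) \geq 0$ is independent of $\mu$ and satisfies $F(\mu) = k \cdot \iota(\mu,\lambda_\Sigma)$ for every non-zero $\mu$; the case $\mu = 0$ is trivial by positive linearity. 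The only mild obstacle I foresee is the bookkeeping in the extension step, but this is essentially a repackaging of the continuity argument already embedded in \Cref{lem:somethings-t-invariant}.
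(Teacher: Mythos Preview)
Your proof is correct and follows essentially the same route as the paper's: apply \Cref{lem:somethings-t-invariant} to obtain a $t$-fixed limit current, identify it with a multiple of $\lambda_\Sigma$ via \Cref{lem:monoid-dense} and \Cref{lem:t-invariant=>Liouville}, and then compare the ratios $F/\iota(-,\lambda_\Sigma)$ along the averaging sequence. The only organisational difference is that you first extend the identity $F(\mu^{(t)})=F(\mu)$ to all currents and then run the argument for an arbitrary $\mu$, whereas the paper runs the argument for a curve $\gamma$ (using that iterated $t$-pushforwards of a curve remain in $\RR_{\geq 0}\ocurves{\Sigma}$) and extends to all currents only at the end; both versions are equally valid.
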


\begin{proof}
Let $k = F(\lambda_\Sigma)/\iota(\lambda_\Sigma,\lambda_\Sigma)$. We will aim to show that $F(\gamma) = k \cdot \iota(\gamma,\lambda_{\Sigma})$ for all $\gamma \in \ocurves{\Sigma}$. As $\RR_+\ocurves{\Sigma}$ is dense in $\ocurrents{\Sigma}$ \cite[Proposition~2]{Bonahon1988} and as $F$ and $\iota(-,\lambda_\Sigma)$ are positively linear and continuous, this will imply the result.

Let $\gamma \in \ocurves{\Sigma}$, and define $(\gamma_n)_{n=0}^\infty$ inductively by $\gamma_0 = \gamma$ and $\gamma_n = \gamma_{n-1}^{(t)}$ for $n \geq 1$. Since $F$ is positively linear and $F(\gamma'^{(t)}) = F(\gamma')$ for any $\gamma' \in \ocurves{\Sigma}$, it follows that $F(\gamma_n) = F(\gamma)$ for all $n \geq 0$; on the other hand, $\iota(\gamma_n,\lambda_\Sigma) = \iota(\gamma,\lambda_\Sigma)$ for all $n \geq 0$ by \Cref{lem:somethings-same-intersection} and induction on $n$. Since $f$ and $\iota(-,\lambda_\Sigma)$ are positively linear, it also follows that $F(\mu)/\iota(\mu,\lambda_\Sigma) = F(\gamma)/\iota(\gamma,\lambda_\Sigma)$ whenever $\mu = \sum_{i=0}^n c_i \gamma_i$ for some $c_0,\ldots,c_n \geq 0$. Since $F$ and $\iota(-,\lambda_\Sigma)$ are continuous, it follows from \Cref{lem:somethings-t-invariant} that there exists a non-zero current $\overline\mu \in \ocurrents{\Sigma}$ such that $F(\overline\mu)/\iota(\overline\mu,\lambda_\Sigma) = F(\gamma)/\iota(\gamma,\lambda_\Sigma)$ and $\overline\mu = \overline\mu^{(t)}$.

Now by \Cref{lem:monoid-dense}, the submonoid $\monoid{G,t}$ is dense in $\PSL_2(\RR)$. Therefore, it follows from \Cref{lem:t-invariant=>Liouville} that $\overline\mu = k' \cdot \lambda_\Sigma$ for some $k' > 0$. Thus
\[
F(\gamma) = \frac{F(\overline\mu)}{\iota(\overline\mu,\lambda_\Sigma)} \iota(\gamma,\lambda_\Sigma) = \frac{k' \cdot F(\lambda_\Sigma)}{k' \cdot \iota(\lambda_\Sigma,\lambda_\Sigma)} \iota(\gamma,\lambda_\Sigma) = k \cdot \iota(\gamma,\lambda_\Sigma),
\]
as required.
\end{proof}

\section{Lattices in the hyperbolic plane and a tree}\label{sec:lattices}
In this section we will collect a number of results about irreducible cocompact lattices in the product of $\PSL_2(\RR)$ and the automorphism group $T$ of a locally-finite unimodular leafless tree $\calt$.  Will assume $T$ is non-discrete.  Throughout $\Gamma$ will be an irreducible cocompact lattice in $\PSL_2(\RR)\times T$.  Note that by \cite[Corollary~3.6]{Hughes2021a} $\Gamma$ is either an irreducible $S$-arithmetic lattice and $\calt$ is a $(p+1)$-regular tree for some prime $p$, or $\Gamma$ is non-residually finite.  In either case, by \Cref{thm.gol} $\Gamma$ contains a commensurated subgroup $G$ isomorphic to the fundamental group of a closed compact surface which arises as a finite index subgroup of a vertex stabiliser of the action of $\Gamma$ on $\calt$.

First, we will investigate the density of the projection of $\Gamma$ to $\PSL_2(\RR)$.

\begin{lemma} \label{lem:dense}
The projection $P$ of $\Gamma$ to $\PSL_2(\RR)$ is dense.
\end{lemma}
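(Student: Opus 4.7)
The plan is to observe that $\overline{P}$ is forced to be cocompact in $\PSL_2(\RR)$, classify the closed cocompact subgroups of $\PSL_2(\RR)$, and then rule out the only proper possibility---a Borel subgroup---via non-solvability of a vertex stabiliser.

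First I would note that $\Gamma \subseteq \overline{P} \times T$, and since $\Gamma$ is cocompact in $\PSL_2(\RR) \times T$, the closed intermediate subgroup $\overline{P} \times T$ is also cocompact. Projecting to the first factor, this forces $\overline{P}$ to be a closed cocompact subgroup of $\PSL_2(\RR)$. Irreducibility of $\Gamma$ together with \cite[Proposition~3.4]{Hughes2021a} ensures that $P$ is non-discrete, so $\overline{P}$ has positive dimension.

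The second step is a classification. Any proper non-trivial connected Lie subgroup of $\PSL_2(\RR)$ is either $1$-dimensional (conjugate to a one-parameter elliptic, hyperbolic, or parabolic subgroup) or $2$-dimensional (the Borel $B$). A direct check---using the transitive action on $\Htwo$ or on $\partial \Htwo$---shows that among these only the Borel has compact quotient in $\PSL_2(\RR)$. Since $B$ is self-normalising in $\PSL_2(\RR)$ and $\overline{P}$ normalises its own identity component $\overline{P}^\circ$, it follows that either $\overline{P}=\PSL_2(\RR)$, in which case we are done, or $\overline{P}$ is conjugate to $B$ and is in particular solvable.

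For the final step, I would invoke the structural remarks opening this section: a vertex stabiliser $A_v$ of $\Gamma \curvearrowright \calt$ has a finite-index subgroup $H$ isomorphic to a closed orientable surface group, and by \Cref{thm.gol} together with \Cref{def.gol} the image $\psi(A_v)=\pi_{\PSL_2(\RR)}(A_v)$ is a uniform $\PSL_2(\RR)$-lattice. Hence $\psi(H)$ is a finite-index subgroup of that lattice and in particular contains a non-abelian free group. If $\overline{P}$ were conjugate to a Borel, then $\psi(H)\subseteq P\subseteq \overline{P}$ would be solvable, contradicting the presence of this free subgroup. Thus the remaining case is $\overline{P}=\PSL_2(\RR)$, as required. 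The main obstacle is carrying out the classification cleanly---in particular handling the component group of $\overline{P}$ and ruling out the $1$-dimensional possibilities---although this is routine Lie theory.
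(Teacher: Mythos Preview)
Your approach is correct and takes a different route from the paper's. The paper observes that $P$ contains a uniform $\PSL_2(\RR)$-lattice (via \Cref{thm.gol}), hence is Zariski-dense; since the Lie algebra of $\overline{P}$ is then an ideal in $\mathfrak{sl}_2(\RR)$, simplicity forces $\overline{P}$ to be either discrete or all of $\PSL_2(\RR)$, and irreducibility rules out the former. You instead deduce cocompactness of $\overline{P}$ directly from cocompactness of $\Gamma$, classify the positive-dimensional closed cocompact subgroups of $\PSL_2(\RR)$, and eliminate the solvable ones using the non-abelian free subgroup inside the lattice image. One caveat on your case analysis: $\overline{P}^\circ$ need not itself be cocompact when $\overline{P}/\overline{P}^\circ$ is infinite, so the possibility that $\overline{P}^\circ$ is conjugate to the unipotent radical $N$ with $\overline{P}/N\cong\ZZ$ survives the first pass---but such $\overline{P}$ still lies in a Borel and is solvable, so your final contradiction applies unchanged. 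The paper's argument is a one-line Lie-algebra observation that avoids any case analysis; yours is more hands-on but stays entirely within elementary Lie theory and does not invoke the Zariski topology.
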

\begin{proof}
If $\Gamma$ is linear then $P$ contains an $S$-arithmetic lattice and such a subgroup of $\PSL_2(\RR)$ is dense.  Thus, we may assume $\Gamma$ is non-residually finite. By \Cref{thm.gol} $\Gamma$ splits as a graph of groups in which each vertex group is a finite extension of a uniform lattice in $\PSL_2(\RR)$.  In particular, $P$ contains a uniform $\PSL_2(\RR)$-lattice and hence is Zariski-dense in $\PSL_2(\RR)$.  A Zariski-dense subgroup of $\SL_2(\RR)$ is either dense or discrete.  Indeed the Lie algebra of its closure is an ideal, hence either $0$ or $\mathfrak{sl}_2(\RR)$.  Now, since $\Gamma$ is irreducible, $P$ is non-discrete and so we conclude that $P$ is dense in $\PSL_2(\RR)$.
\end{proof}

Our next task is to show there is a commensurated surface subgroup of $\Gamma$ which is $\calm$-quasiconvex with respect to any biautomatic structure $(B,\calm)$.  The key fact is that in a biautomatic group the centraliser of a finite set is $\calm$-quasiconvex (see \Cref{thm:qconvex}\ref{it:qconvex-centra}).  Before this we will need a lemma.

\begin{lemma}\label{lem:freebylinear}
If $\Gamma$ is non-residually finite, then we have a short exact sequence
\[\begin{tikzcd}\arrow[r] 1 & \arrow[r] F & \arrow[r,"\pi_{\PSL_2(\RR)}"] \Gamma & \arrow[r] P & 1  \end{tikzcd}\]
where $F$ is fundamental group of a graph of finite groups and $P$ is linear.  In particular, if $\Gamma$ is torsion-free, then $F$ is a free group.  In both cases $F$ is infinite, not virtually abelian, and every locally finite subgroup of $F$ is finite.
\end{lemma}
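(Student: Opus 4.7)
The plan is to set $F := \ker(\pi_{\PSL_2(\RR)}|_\Gamma)$ and $P := \pi_{\PSL_2(\RR)}(\Gamma)$. The short exact sequence is then immediate, and $P$ is linear since $\PSL_2(\RR) \leq \mathrm{GL}_3(\RR)$. For the graph of finite groups structure, I use that $F = \Gamma \cap (\{1\} \times T)$: for every vertex $v \in \calt$ the stabiliser $F_v = F \cap \Gamma_v = \ker(\psi|_{\Gamma_v})$ is a discrete subgroup of the compact group $T_v$, and so is finite. Bass--Serre theory applied to the $F$-action on $\calt$ then exhibits $F$ as the fundamental group of a graph of finite groups. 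If $\Gamma$ is torsion-free then so is $F$, in which case the $F$-action on $\calt$ is free and $F$ is a free group.

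To show $F$ is infinite I argue by contradiction: if $F$ were finite then $\Gamma$ would be a finite-kernel extension of the finitely generated linear group $P$. By Mal'cev's theorem $P$ is residually finite, and any finite-kernel extension of a residually finite group is residually finite, contradicting the non-residual-finiteness of $\Gamma$.

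For $F$ not virtually abelian, the key step is that $\Gamma$ acts minimally on the leafless tree $\calt$, from which the minimal $F$-invariant subtree $\calt_F$ is also $\Gamma$-invariant (for any $g \in \Gamma$, the subtree $g\calt_F$ is $gFg^{-1} = F$-invariant, so minimality forces $\calt_F \subseteq g\calt_F$, and applying this to $g^{-1}$ gives equality) and hence equals $\calt$. Any finite-index abelian subgroup $A \leq F$ must contain a translation of $\calt$ (since $F$ is infinite but has finite vertex stabilisers), so $A$ is virtually $\ZZ$ and hence $F$ is virtually $\ZZ$. Taking a normal finite-index cyclic $F_0 = \langle t \rangle$, every element of $F$ conjugates $t$ to $t^{\pm 1}$ and so preserves the axis $\ell$ of $t$ setwise; thus $\ell$ is an $F$-invariant subtree, giving $\ell \supseteq \calt_F = \calt$, so $\calt$ is a bi-infinite line. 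But non-discreteness of $T$ forces $\calt$ to contain a vertex of valence $\geq 3$, a contradiction.

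Finally, each vertex group $\Gamma_v$ is covirtually a $\PSL_2(\RR)$-lattice, and so has bounded torsion (by Selberg's lemma a lattice in $\PSL_2(\RR)$ is virtually torsion-free, which combined with the covirtual finite kernel bounds the orders of finite subgroups of $\Gamma_v$). By Bass--Serre every finite subgroup of $\Gamma$ is conjugate into some $\Gamma_v$, and finiteness of $\Gamma\backslash\calt$ gives a uniform bound $M$ on $|K|$ for any finite $K \leq \Gamma$. A countable locally finite subgroup $H \leq F$ is an ascending union of its finitely generated---hence finite of order at most $M$---subgroups, so pigeonhole forces the chain of orders to stabilise and $H$ is finite. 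I expect the main obstacle to be the ``not virtually abelian'' step, where I must combine minimality of the $F$-action with the fact that $\calt$ is not a line to rule out the virtually cyclic case.
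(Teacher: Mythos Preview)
Your argument that $F$ is infinite contains a false claim: it is \emph{not} true that every finite-kernel extension of a residually finite group is residually finite. Deligne's central extensions of $\Sp_{2n}(\ZZ)$ ($n \geq 2$) furnish counterexamples---modding out the infinite cyclic centre by $4\ZZ$ gives a $\ZZ/4$-central extension of the linear group $\Sp_{2n}(\ZZ)$ that is not residually finite, since every finite quotient kills the index-$2$ subgroup of the centre. The paper avoids this by arguing directly from the embedding $\Gamma < \PSL_2(\RR) \times T$: if $F$ were finite it would fix a vertex of $\calt$, and by normality its fixed-point set is $\Gamma$-invariant, hence all of $\calt$ by minimality; but then $F$ lies in the kernel of both projections and so is trivial, contradicting the fact (immediate from non-residual-finiteness) that $\pi_{\PSL_2(\RR)}$ cannot be injective on $\Gamma$.

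A smaller issue: in your ``not virtually abelian'' step, concluding that a finite-index abelian $A \leq F$ contains a translation requires ruling out that $A$ is infinite torsion, which needs your ``locally finite $\Rightarrow$ finite'' result---proved only afterwards. Reordering (as the paper does) fixes this; note also that the very existence of the minimal subtree $\calt_F$ presupposes that $F$ has a hyperbolic element. Your approaches to the remaining points are correct and pleasantly different from the paper's: you bound torsion via Selberg rather than via the $\CAT(0)$ finiteness of conjugacy classes of finite subgroups, and you deduce non-virtual-abelianness through minimality of the $F$-action rather than by exhibiting two hyperbolic elements of $F$ with distinct axes.
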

\begin{proof}
Since $\Gamma$ is non-residually finite $\Gamma$ does not admit any faithful linear representation and so $F$ is non-trivial.  Now, $\Gamma$ splits as a graph of finite-by-Fuchsian groups and each Fuchsian group is isomorphic to its image in $P$.  It follows that the action of $F$ on $\calt$ has finite stabilisers.  In particular, $F$ is the fundamental group of a graph of finite groups. If $\Gamma$ is torsion-free, then each vertex and edge stabiliser of the $F$-action on $\calt$ is trivial.  It follows that $F$ admits a free action on a tree and so must be free.  That $P$ is linear follows from the fact $\PSL_2(\RR)$ is linear.  

Since $\Gamma$ is $\CAT(0)$, it has only finitely many conjugacy classes of finite subgroups, implying that any ascending sequence of finite subgroups of $\Gamma$ terminates. It follows that $\Gamma$ (and so $F$) has no infinite locally finite subgroups. We claim that if $F$ was finite then $F$ must act trivially on $\calt$.  Indeed, if $F$ was finite then it acts on $\calt$ elliptically with fixed point set $\calt^F$ a subtree of $\calt$.  By normality of $F$ in $\Gamma$, the subtree $\calt^F$ is $\Gamma$-invariant.  But $\Gamma$ is a uniform lattice and $\calt$ is leafless, so $\Gamma$ acts minimally on $\calt$.  Thus, $F$ is infinite. It remains to show that $F$ is not virtually abelian.

Since $F$ is infinite and not locally finite, it contains a finitely generated infinite subgroup. Such a subgroup cannot be torsion (otherwise it would fix a point in $\calt$, contradicting the fact that the action of $F$ on $\calt$ has finite stabilisers); it follows that $F$ contains an infinite order element $g$. Since the action of $F$ on $\calt$ has finite stabilisers, $g$ must be hyperbolic in this action; let $\ell \subseteq \calt$ be the axis of $g$. Since $T$ is non-discrete it follows that $\calt$ is not a line; moreover, since the $\Gamma$-action on $\calt$ is cocompact and since $\calt$ is leafless and locally-finite, there exists an element $h \in \Gamma$ such that $h\ell \neq \ell$. Then $g$ and $hgh^{-1}$ are two hyperbolic elements of $F$ that have distinct axes, so $g^n$ does not commute with $hg^mh^{-1}$ for any $n,m \neq 0$. This implies that $F$ is not virtually abelian.
\end{proof}

\begin{prop} \label{prop:quasiconvex}
Suppose $(B,\calm)$ is a finite-to-one biautomatic structure on $\Gamma$.  If $\Gamma$ is non-residually finite and torsion-free, then any vertex stabiliser of the action on $\calt$ is an $\calm$-quasiconvex subgroup.
\end{prop}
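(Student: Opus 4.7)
The plan is to exhibit a finite-index subgroup of $G_v$ as the centraliser of a finite subset of $\Gamma$, then apply \Cref{thm:qconvex}\ref{it:qconvex-centra} combined with \Cref{lem:qconvex-fi}. I first invoke \Cref{lem:freebylinear} to obtain the short exact sequence $1 \to F \to \Gamma \to P \to 1$, where $F$ is a free non-abelian normal subgroup and $P$ is linear. Since $\Gamma$ is torsion-free and $F$ acts on $\calt$ with finite stabilisers, $F$ in fact acts freely on $\calt$; in particular $G_v \cap F = \{1\}$, and $G_v$ embeds into $P$ as a uniform surface subgroup of $\PSL_2(\RR)$.

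The key step is to select finitely many primitive hyperbolic elements $f_1,\ldots,f_n \in F$ whose axes $\ell_{f_i} \subseteq \calt$ all contain $v$ and have pairwise bounded intersection; such elements exist because $F$ is non-abelian free and acts freely on the locally-finite leafless tree $\calt$. Torsion-freeness of $\Gamma$ is then critical: any $h \in C_\Gamma(\{f_1,\ldots,f_n\})$ preserves each $\ell_{f_i}$ setwise, but cannot invert $\ell_{f_i}$ (the relation $hf_ih^{-1} = f_i^{-1}$, combined with $hf_i = f_ih$, would force $f_i^2 = 1$) nor translate any $\ell_{f_i}$ nontrivially (such a translation would push $v$ out of the bounded set $\ell_{f_i} \cap \ell_{f_j}$ for some $j \neq i$). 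Hence $h$ fixes each $\ell_{f_i}$ pointwise, giving
\[
C_\Gamma(\{f_1,\ldots,f_n\}) \;\subseteq\; \bigcap_{i=1}^{n}\,\bigcap_{w \in \ell_{f_i}} G_w \;\subseteq\; G_v.
\]

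The main obstacle is arranging that this centraliser has \emph{finite} index in $G_v$, rather than collapsing to the trivial subgroup: since $G_v$ is a residually finite surface group, the descending intersection of vertex stabilisers along an axis can in principle shrink all the way down. To control this, one should realise the centraliser as the stabiliser of a bounded subtree of $\calt$ around $v$. A natural approach is to pass (via \Cref{lem:qconvex-fi}) to a finite-index subgroup $\Gamma_0 \leq \Gamma$ whose action on $\calt$ has suitably controlled edge stabilisers, choose the $f_i$ so that each $\ell_{f_i}$ covers only finitely many vertex-orbits in $\Gamma_0 \backslash \calt$, and exploit the splitting from \Cref{thm.gol} to pin down the intersection at a finite-index subgroup of $G_v$. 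Once $C_\Gamma(\{f_1,\ldots,f_n\})$ is shown to be commensurable with $G_v$, the $\calm$-quasi-convexity of $G_v$ follows from \Cref{thm:qconvex}\ref{it:qconvex-centra} and \Cref{lem:qconvex-fi}.
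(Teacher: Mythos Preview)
Your overall strategy---exhibit the centraliser of a finite subset of $F$ as a subgroup commensurable with $G_v$, then apply \Cref{thm:qconvex}\ref{it:qconvex-centra} and \Cref{lem:qconvex-fi}---is exactly the paper's. But you are missing the single observation that makes the finite-index step immediate, and your proposed workaround (axes through $v$, passing to $\Gamma_0$, controlling edge stabilisers along the splitting) is left as a sketch that does not obviously terminate.

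The missing observation is a commutator trick. For any $g \in F$ and any $s \in G_v \cap g^{-1}G_vg$, one has $[g,s] = 1$: indeed $\pi_{\PSL_2(\RR)}([g,s]) = [1,\pi_{\PSL_2(\RR)}(s)] = 1$ gives $[g,s] \in F$, while $gsg^{-1} \in G_v$ and $s^{-1} \in G_v$ give $[g,s] \in G_v$, so $[g,s] \in F \cap G_v = \{1\}$. Now take \emph{any} two non-commuting $g,h \in F$; there is no need to arrange their axes to pass through $v$. Since $G_v$ is commensurated in $\Gamma$, the subgroup $S := G_v \cap G_v^{g} \cap G_v^{h}$ has finite index in $G_v$, and by the trick above $S \subseteq C_\Gamma(\{g,h\})$. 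Your own axis argument then shows $C_\Gamma(\{g,h\})$ fixes a vertex $w$, so $S \leq C_\Gamma(\{g,h\}) \leq G_w$ with $S$ of finite index in $G_v$; commensurability of vertex stabilisers finishes the proof.

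Your worry that ``the descending intersection of vertex stabilisers along an axis can in principle shrink all the way down'' is entirely legitimate for the route you chose (forcing $C$ to fix each $\ell_{f_i}$ pointwise means intersecting infinitely many stabilisers), and nothing in your outline prevents it. The paper sidesteps this by intersecting $G_v$ with only \emph{two} of its $\Gamma$-conjugates and using commensuration for the index bound, rather than relying on any geometric control along axes.
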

\begin{proof}
Let $G$ be a commensurated surface subgroup of $\Gamma$.  Let $F=\Ker(\pi_{\PSL_2(\RR)})$ and note by \Cref{lem:freebylinear} that $F$ is a non-abelian free subgroup acting freely on $\calt$. Let $g,h$ be contained in this free group and suppose that they do not commute.  

We claim since $G$ is commensurated and $F$ is normal, the elements $g$ and $h$ commute with the subgroup $S=G\cap G^g\cap G^h$ which has finite index in $G$.  Indeed, let $s\in S$ and note $s$ and $s^g$ fix vertices of $\calt$.  It follows that the commutator $[s,g]$ lies in $G^g$.  The commutator $[s,g]$ maps trivially under the projection to $\PSL_2(\RR)$, but the projection restricted to $G^g$ is injective.  Thus, $[s,g]=1$ and the claim follows. 

By the previous claim, $C:=C_\Gamma(\{g,h\})$ contains $S$.  Now, $g$ and $h$ have distinct axes so $C$ must fix a vertex on $\calt$.  In particular, $C$ is a finite-index subgroup of a vertex stabiliser containing a finite index subgroup $S$ of $G$, implying that $C$ is commensurable with $G$ in $\Gamma$. Finally, since $C$ is the centraliser of a finite set if follows from \Cref{thm:qconvex} that $C$ is $\calm$-quasiconvex. By \Cref{lem:qconvex-fi}, it follows that $G$ is $\calm$-quasiconvex as well.
\end{proof}

Finally, we record this proposition for later use.  It is a special case of \cite[Corollary~3.3]{Hughes2021b}.

\begin{prop}\label{prop.HHG}
$\Gamma$ is a hierarchically hyperbolic group.
\end{prop}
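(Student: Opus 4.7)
The plan is to exhibit $\Gamma$ as a group acting geometrically on a hierarchically hyperbolic space in a structure-preserving way. By \Cref{thm.gol}, since $\Gamma$ is a uniform $(\PSL_2(\RR)\times T)$-lattice, $\Gamma$ acts properly and cocompactly by isometries on $X := \RH^2 \times \calt$ (with any reasonable product metric, e.g.\ $\ell_2$ or $\ell_\infty$). Thus it suffices to equip $X$ with an HHS structure and verify that the $\Gamma$-action respects the structure.

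The natural HHS structure on a product of two hyperbolic spaces $X_1 \times X_2$ is well known: take the index set $\mathfrak{S} = \{S, U_1, U_2\}$ where $S$ is the $\sqsubseteq$-maximal element and $U_1, U_2$ are pairwise orthogonal and both nested in $S$. Associate to $U_i$ the hyperbolic space $X_i$ with projection $X \to X_i$ being the coordinate projection, and associate to $S$ a single point (or any bounded hyperbolic space). Both $\RH^2$ and $\calt$ are (Gromov) hyperbolic---indeed $\calt$ is a tree and so $0$-hyperbolic---so the associated spaces are uniformly hyperbolic. The consistency, bounded geodesic image, large link, partial realisation and uniqueness axioms of \cite{BehrstockHagenSisto2017a} are straightforward to check in this setting: the two factors are orthogonal, so all nontrivial relative projections are trivial, and each axiom reduces to the corresponding statement within a single hyperbolic factor. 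This realises $X$ as an HHS.

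To upgrade this to an HHG structure on $\Gamma$, one needs an action of $\Gamma$ on the index set $\mathfrak{S}$ that is compatible with the hyperbolic space data and the projections. Since $\Gamma < \PSL_2(\RR)\times T$ preserves each factor of $X = \RH^2 \times \calt$ individually (the factors being distinguished e.g.\ by local topological dimension), the trivial action on $\mathfrak{S}$ works: each $\gamma \in \Gamma$ induces an isometry of $\RH^2$ and of $\calt$ separately, and these equivariantly intertwine the projections. Combined with the geometric action on $X$, this exhibits $\Gamma$ as an HHG. The only mild subtlety is checking that the relevant equivariance holds on the nose rather than up to bounded error, which follows from the product structure being preserved by $\PSL_2(\RR)\times T$.

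The main obstacle is simply invoking or reproducing the correct axiomatic framework; there is no geometric difficulty. In practice, this is exactly the content of \cite[Corollary~3.3]{Hughes2021b}, which asserts that any group acting geometrically on a product of hyperbolic spaces $X_1 \times X_2$ in a way that preserves the factors is an HHG, and this hypothesis is automatic for a uniform lattice in $\PSL_2(\RR) \times T$ by \Cref{thm.gol}, completing the proof.
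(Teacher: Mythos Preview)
Your proposal is correct and aligns with the paper's approach: the paper does not give a proof at all but simply records the proposition as ``a special case of \cite[Corollary~3.3]{Hughes2021b}''. You have unpacked what that citation says---namely, that a group acting geometrically and factor-preservingly on a product of hyperbolic spaces is an HHG---and verified its hypotheses via \Cref{thm.gol}, which is exactly the intended justification.
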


\section{An explicit example}\label{sec:example}
Throughout this section we will use quaternion algebras and arithmetic Fuchsian groups derived from them, for the relevant background the reader should consult \cite[Chapter~5]{Katok1992}.  The construction appeared in the first author's PhD thesis, however, the example there is different to the one given here \cite[Section~4.5.2]{HughesThesis}.

Let $Q$ be the quaternion algebra $(2,13)_\QQ$, this is a $4$-dimensional algebra over $\QQ$ with basis $\{1,i,j,k\}$ satisfying the relations $i^2=2$, $j^2=13$ and $k=ij=-ji$.  The algebra $Q$ has a representation $\varphi\colon Q\to\mathbf{M}_2(\RR)$ given by
\[
   1\mapsto\begin{bmatrix}1&0\\0&1\end{bmatrix}, \quad 
   i\mapsto\begin{bmatrix}\sqrt{2} &0 \\ 0 & -\sqrt{2} \end{bmatrix}, \quad
    j\mapsto\begin{bmatrix} 0&1\\ 13&0\end{bmatrix}, \quad
    k\mapsto\begin{bmatrix}0&\sqrt{2}\\-13\sqrt{2}&0\end{bmatrix}.
\]

Let $t=\frac{1}{3}(1+3i+k)\in Q$, and note that
\[\varphi(t)=\begin{bmatrix}\frac{1}{3}(3\sqrt{2} + 1)    &    \frac{1}{3}\sqrt{2}\\
   -\frac{13}{3}\sqrt{2} & \frac{1}{3}(1-3\sqrt{2})\end{bmatrix}; \]
   it follows that the image of $\varphi(t)$ in $\PSL_2(\RR)$ is an infinite order elliptic isometry of $\RH^2$.
   A basis for a maximal order $M$ of $Q$ is given by the following quaternions
\[\{a,b,c,d\}:=\left\{\frac{3}{2} + \frac{3}{2}i - \frac{1}{2}j - \frac{1}{2}k,\ \frac{3}{2} - \frac{3}{2}i - \frac{1}{2}j + \frac{1}{2}k,\ \frac{5}{2} + i - \frac{1}{2}j,\ \frac{7}{2} + 2i + \frac{1}{2}j\right\};\]
this has image given by
\begin{align*}
\varphi(a)&=\begin{bmatrix}  \frac{3}{2}(\sqrt{2} + 1) &   -\frac{1}{2}(\sqrt{2} + 1)\\
\frac{13}{2}(\sqrt{2} - 1) & \frac{3}{2}(1-\sqrt{2})\end{bmatrix}, &
\varphi(b)&=\begin{bmatrix}   \frac{3}{2}(1-\sqrt{2}) &     \frac{1}{2}(\sqrt{2} - 1)\\
-\frac{13}{2}(\sqrt{2} + 1) &   \frac{3}{2}(\sqrt{2} + 1)\end{bmatrix},\\
\varphi(c)&=\begin{bmatrix} \frac{1}{2}(2\sqrt{2} + 5)  &     -\frac{1}{2}\\
     -\frac{13}{2} & \frac{1}{2}(5-2\sqrt{2})\end{bmatrix}, &
\varphi(d)&=\begin{bmatrix}\frac{1}{2}(4\sqrt{2} + 7) &        \frac{1}{2}\\
      \frac{13}{2} & \frac{1}{2}(7-4\sqrt{2})\end{bmatrix}.
\end{align*}
Conjugating $M$ by $t$ we obtain another maximal order $N$.  Let $U^1(M)$ and $U^1(N)$ denote the groups of norm one quaternions under multiplication in $M$ and $N$ respectively.  Note that their image under $\varphi$ is contained in $\SL_2(\RR)$.

Denote the image of $U^1(M)$ and $U^1(N)$ under $\varphi$ after projecting to $\PSL_2(\RR)$ by $\mathrm{P}M$ and $\mathrm{P}N$ respectively.  Both of these groups are isomorphic to the fundamental group of a genus $2$ surface (this may be verified in Magma).  It is easy to see $\varphi(t)$ commensurates $U^1(M)$ and hence $U^1(M)$ and $U^1(N)$ share a common finite index subgroup.  The intersection $K=\mathrm{P}M\cap \mathrm{P}N$ has index $12$ in both $\mathrm{P}M$ and $\mathrm{P}N$, in particular, $K$ is the fundamental group of a genus $13$ surface.  We compute, using Dehn's algorithm a word in $a,b,c,d$ for $t^{-1}gt$ for each generator $g$ of $K$.  We will denote the subgroup generated by these words $H$ and note that $H^t=K$.

We now build a HNN-extension $\Gamma=\mathrm{P}M\ast_{H^t=K}$.  The group has $5$ generators which (abusing notation) we label $a,\ b,\ c,\ d,\ t$ and admits a presentation with $27$ relations, displayed in \Cref{app:pres}.

\begin{lemma}\label{lem:irrlattice}
$\Gamma$ is an irreducible uniform lattice in $\PSL_2(\RR)\times T_{24}$.
\end{lemma}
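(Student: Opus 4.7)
The plan is to apply \Cref{thm.gol} directly. First I would interpret $\Gamma$ as the fundamental group of the graph of groups $(A,\cala)$ with a single vertex $v$ and single (oriented) edge $e$, setting $A_v=\mathrm{P}M$ and $A_e=K$, with the two structure maps $\alpha_e,\alpha_{\bar e}\colon A_e\to A_v$ being (a) the inclusion $K\hookrightarrow\mathrm{P}M$ and (b) the composition $K\xrightarrow{\sim}H\hookrightarrow\mathrm{P}M$, where the first arrow is the isomorphism induced by conjugation by $t^{-1}$ (it is an isomorphism because the defining words were obtained via Dehn's algorithm applied to $t^{-1}gt$ for $g$ ranging over the generators of $K$). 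By construction $\pi_1(\cala)$ is exactly the HNN extension $\mathrm{P}M\ast_{H^t=K}=\Gamma$. I equip $\cala$ with the morphism $\psi\colon\cala\to\PSL_2(\RR)$ given by the identity on each local group (which is already embedded in $\PSL_2(\RR)$) and by $\psi(t_e)=\varphi(t)$; the HNN relation holds in $\PSL_2(\RR)$ precisely because $\varphi(t)h_k\varphi(t)^{-1}=k$ for every generator $k$ of $K$, so $\psi$ is a well-defined morphism of graphs of groups.

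Next I would verify the three conditions of \Cref{def.gol}. Condition (i) is immediate: $\mathrm{P}M$ and $K$ are uniform $\PSL_2(\RR)$-lattices (surface groups of genera $2$ and $13$). Condition (ii) holds because $K$ has index $12$ in $\mathrm{P}M$ in both the abstract and the ambient senses; moreover $[\mathrm{P}M:H]=[\mathrm{P}M:t^{-1}Kt]=[\mathrm{P}N:K]=12$. Condition (iii) holds because $\varphi(t)\mathrm{P}M\varphi(t)^{-1}=\mathrm{P}N$ and $K=\mathrm{P}M\cap\mathrm{P}N$, so $\varphi(t)$ commensurates $K$. I then invoke \Cref{thm.gol}: the Bass-Serre tree $\calt$ of $(A,\cala)$ is regular of valence $[\mathrm{P}M:K]+[\mathrm{P}M:H]=24$, hence $\calt\cong\calt_{24}$; its full automorphism group $T=T_{24}$ is non-discrete, unimodular and admits uniform lattices (for instance the virtually free lattices obtained from a graph of finite groups with one vertex of trivial stabiliser and valence $24$). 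The kernels $\Ker(\psi|_{A_\sigma})$ are trivial and therefore act faithfully on $\calt$. Applying \Cref{thm.gol} yields that $\Gamma$ is a uniform $(\PSL_2(\RR)\times T_{24})$-lattice.

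Finally, for irreducibility, I would check weak irreducibility by observing that $\varphi(t)$ is an infinite-order elliptic isometry of $\RH^2$ fixing some point $p\in\RH^2$. The cyclic group $\langle\varphi(t)\rangle$ sits inside the compact circle subgroup $\Stab_{\PSL_2(\RR)}(p)\cong\mathrm{SO}(2)$, and being infinite it is dense, hence non-discrete. Consequently $\pi_{\PSL_2(\RR)}(\Gamma)\supseteq\langle\varphi(t)\rangle$ is non-discrete, which is precisely weak irreducibility; by the equivalence in \cite[Theorem~4.2]{CapraceMonod2009b} this gives (algebraic) irreducibility. The only genuinely non-routine part of the argument is the bookkeeping in setting up the graph-of-groups morphism $\psi$, most of which is already ensured by the explicit calculations preceding the lemma (the Dehn's-algorithm words for $t^{-1}gt$ and the index-$12$ computation of $K$ in $\mathrm{P}M$); everything else is a direct application of the hypotheses of \Cref{thm.gol} and of basic facts about regular trees.
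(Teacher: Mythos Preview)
Your proposal is correct and follows essentially the same approach as the paper's proof: both set up $\Gamma$ as a graph of $\PSL_2(\RR)$-lattices, observe that the two edge embeddings have index $12$ in $\mathrm{P}M$ so that the Bass--Serre tree is $24$-regular, apply \Cref{thm.gol} to conclude $\Gamma$ is a uniform $(\PSL_2(\RR)\times T_{24})$-lattice, and then argue irreducibility from the non-discreteness of $\langle\varphi(t)\rangle$ (an infinite-order elliptic). The only minor difference is that the paper cites \cite[Proposition~3.4]{Hughes2021a} for the last step whereas you invoke \cite[Theorem~4.2]{CapraceMonod2009b}, but both references appear in the paper's preliminaries as establishing the same equivalence, so this is cosmetic.
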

\begin{proof}
Since $\Gamma$ is a graph of groups equipped with a morphism to $\PSL_2(\RR)$ such that the vertex stabiliser $\Gamma_v$ is a uniform $\PSL_2(\RR)$ lattice and the stable letter commensurates $\Gamma_v$, it follows $\Gamma$ is a \emph{graph of lattices} in the sense of \Cref{def.gol}.  The two embeddings of the edge group have index $12$ in $\Gamma_v$ so the Bass--Serre tree of $\Gamma$ is $24$-regular.  Thus, $\Gamma$ is a uniform lattice in $\PSL_2(\RR)\times T_{24}$ by \Cref{thm.gol}. The image of the subgroup generated by stable letter $t$ in $\PSL_2(\RR)$ is clearly non-discrete because it is generated by an infinite order elliptic isometry.  The irreducibility now follows from \cite[Proposition~3.4]{Hughes2021a}.
\end{proof}

\begin{lemma}\label{lem:non-resfin}
$\Gamma$ is non-residually finite.
\end{lemma}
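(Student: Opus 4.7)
The plan is to invoke the dichotomy of \cite[Corollary~3.6]{Hughes2021a} and rule out the arithmetic alternative by a direct trace computation on the stable letter. By \Cref{lem:irrlattice}, $\Gamma$ is an irreducible uniform lattice in $\PSL_2(\RR)\times T_{24}$, and that corollary says such a lattice is either commensurable in $\PSL_2(\RR)\times T_{24}$ with an $S$-arithmetic lattice (with $S = \{p\}$ for a prime $p$ satisfying $p+1 = 24$, forcing $p = 23$) or it is non-residually finite. It therefore suffices to rule out the $S$-arithmetic possibility.

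Concretely, one reads off the matrix formula for $\varphi(t)$ that $\tr(\varphi(t)) = 2/3$ and $\det(\varphi(t)) = 1$, so the Cayley--Hamilton relation yields the recursion $T_{n+1} = (2/3)T_n - T_{n-1}$, where $T_n := \tr(\varphi(t^n))$, with $T_0 = 2$ and $T_1 = 2/3$. Setting $a_n := 3^n T_n$ produces the integer recursion $a_{n+1} = 2a_n - 9a_{n-1}$; reducing modulo $3$ gives $a_{n+1} \equiv -a_n \pmod 3$, and since $a_0 = a_1 = 2$, a trivial induction shows $3 \nmid a_n$ for every $n \geq 0$. Consequently $T_n$ has denominator exactly $3^n$ in lowest terms, and since $\ZZ[1/3] \cap \ZZ[1/23] = \ZZ$ we obtain $T_n \notin \ZZ[1/23]$ for all $n \geq 1$.

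Suppose, for a contradiction, that $\Gamma$ is commensurable in $\PSL_2(\RR)\times T_{24}$ with such an $S$-arithmetic lattice $\Gamma_S$. By construction $\Gamma_S$ arises as the projective norm-one units of a $\ZZ[1/23]$-maximal order in an indefinite rational quaternion algebra splitting at $23$; in particular the reduced trace of every element of $\Gamma_S$ lies in $\ZZ[1/23]$, and this agrees up to sign with its $\PSL_2(\RR)$-matrix trace via the splitting at $\infty$. Since trace is invariant under conjugation in $\PSL_2(\RR)$, the same property is preserved under any conjugation in the product. Because $\Gamma \cap \Gamma_S$ has finite index in $\Gamma$ and $t$ has infinite order, some $t^n$ with $n \geq 1$ must lie in $\Gamma_S$ (or a suitable conjugate), forcing $T_n \in \ZZ[1/23]$ and contradicting the previous paragraph.

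The main technical subtlety will be carefully verifying that any such $S$-arithmetic lattice in $\PSL_2(\RR)\times\PSL_2(\QQ_{23})$ has $\PSL_2(\RR)$-traces in $\ZZ[1/23]$---this reduces to the standard fact that the matrix trace under the infinite splitting of a rational quaternion algebra equals its reduced trace, and that reduced traces of $\ZZ[1/23]$-integral norm-one units lie in $\ZZ[1/23]$. Once this is in hand, the explicit mod-$3$ obstruction on $T_n$ closes the argument.
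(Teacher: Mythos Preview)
Your argument is sound---the trace obstruction is genuine, and the recursion showing $3^n \mid\!\mid$ the denominator of $\tr(\varphi(t^n))$ is correct---but your route is quite different from the paper's. The paper does not touch the arithmetic alternative at all: it simply observes that $\Gamma$ is an HNN-extension, hence surjects onto $\ZZ$, hence has first Betti number $b_1(\Gamma) \geq 1$, and then invokes \cite[Proposition~3.7]{Hughes2021a}, which says directly that an irreducible uniform $(\PSL_2(\RR)\times T)$-lattice with positive first Betti number is non-residually finite. That is a two-line proof resting on a structural fact (essentially a normal-subgroup-theorem consequence packaged in \cite{Hughes2021a}) rather than on any computation.

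What each approach buys: the paper's argument is shorter, uses only the HNN shape of $\Gamma$, and would apply verbatim to any commensurating HNN-extension of this type regardless of the specific stable letter. Your argument is more explicit and makes visible \emph{why} arithmeticity fails---the stable letter was chosen with reduced trace $2/3$, and the prime $3$ is foreign to the only prime $p=23$ that the $24$-regular tree could come from. The cost is that you must justify that an $S$-arithmetic lattice in $\PSL_2(\RR)\times\PSL_2(\QQ_{23})$ necessarily arises from a $\QQ$-quaternion algebra (not over a larger field) and that its $\PSL_2(\RR)$-traces land in $\ZZ[1/23]$ up to commensurability; you flag this correctly as the main technical point, and it is standard, but it is genuine extra work compared to the paper's one-line Betti-number observation.
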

\begin{proof}
Because $\Gamma$ is an HNN-extension the first Betti number of $\Gamma$ is at least $1$ (in fact a direct computation yields it is exactly $1$).  Since $\Gamma$ is an irreducible lattice it follows from \cite[Proposition~3.7]{Hughes2021a} that $\Gamma$ is non-residually finite.
\end{proof}

\begin{lemma} \label{lem:trlen-irrational}
The translation lengths of $a$ and $c$ in their action on $\RH^2$ are not rational multiples of each other.
\end{lemma}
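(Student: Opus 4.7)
The plan is to make the translation lengths explicit as logarithms of algebraic units in real quadratic fields, and then rule out any multiplicative relation using Galois theory. First I would compute the traces directly from the matrix expressions displayed in the text:
\[ \tr(\varphi(a)) = \tfrac{3}{2}(\sqrt{2}+1) + \tfrac{3}{2}(1-\sqrt{2}) = 3, \qquad \tr(\varphi(c)) = \tfrac{1}{2}(2\sqrt{2}+5) + \tfrac{1}{2}(5-2\sqrt{2}) = 5. \]
Both absolute traces exceed $2$, so $a$ and $c$ act as hyperbolic isometries of $\RH^2$. Using the standard formula $\ell(g) = 2\log \lambda_g$, where $\lambda_g$ is the larger eigenvalue of $\varphi(g) \in \SL_2(\RR)$, and solving the characteristic polynomials $x^2-3x+1$ and $x^2-5x+1$, I would obtain
\[ \ell(a) = 2\log\alpha, \qquad \ell(c) = 2\log\beta, \qquad \alpha := \frac{3+\sqrt{5}}{2},\ \beta := \frac{5+\sqrt{21}}{2}. \]

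Next, I would argue by contradiction. Suppose $q\,\ell(a) = p\,\ell(c)$ for some positive integers $p,q$; exponentiating, this is equivalent to $\alpha^q = \beta^p$. Since $\alpha \in \QQ(\sqrt{5})$ and $\beta \in \QQ(\sqrt{21})$ and these are distinct real quadratic fields (as $5$ and $21$ are distinct squarefree positive integers), their intersection is $\QQ$. Therefore the common value $\alpha^q = \beta^p$ must lie in $\QQ$.

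It then suffices to show $\alpha^q \notin \QQ$ for every $q \geq 1$. The decisive observation is that $\alpha$ is a unit of norm $1$ in the ring of integers of $\QQ(\sqrt{5})$: its Galois conjugate is $\bar\alpha = (3-\sqrt{5})/2$ and $\alpha\bar\alpha = (9-5)/4 = 1$, so $\bar\alpha = \alpha^{-1}$. If $\alpha^q$ were rational, applying the nontrivial Galois automorphism of $\QQ(\sqrt{5})/\QQ$ would give $\alpha^q = \bar\alpha^q = \alpha^{-q}$, forcing $\alpha^{2q} = 1$; this contradicts the fact that $\alpha > 1$. The argument is entirely elementary, and there is no real obstacle beyond getting the trace computation clean — the point that makes everything work is precisely that both traces come out to rational integers, so the eigenvalues land in small real quadratic fields where the unit argument is immediate.
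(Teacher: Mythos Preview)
Your proof is correct and follows essentially the same approach as the paper: both compute the translation lengths as $2\log\frac{3+\sqrt{5}}{2}$ and $2\log\frac{5+\sqrt{21}}{2}$, reduce a rational relation to an equality $\alpha^q = \beta^p$, and rule this out by noting the two sides lie in distinct real quadratic fields. Your Galois-conjugate argument that $\alpha^q \notin \QQ$ is a touch more explicit than the paper's assertion that the $\sqrt{5}$-coefficient stays positive, but the strategy is identical.
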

\begin{proof}
For a hyperbolic isometry $g$ of $\PSL_2(\RR)$ its translation length is \[ \tau(g)=2\cosh^{-1}\left(\frac{1}{2}\tr(\tilde g)\right) \] where $\tilde g$ is a choice of lift of $g$ to $\SL_2(\RR)$.  It follows that we have \[\tau(a)=2\log\left(\frac{3}{2}+\frac{\sqrt{5}}{2}\right) \quad \text{and} \quad \tau(c)= 2\log\left(\frac{5}{2}+\frac{\sqrt{21}}{2}\right).\]
Suppose that $\frac{p}{q}\tau(a)=\tau(c)$ where $p,q\in\ZZ$, $p,q\geq1$, then we have
\[\left(\frac{3}{2}+\frac{\sqrt{5}}{2}\right)^p=\left(\frac{5}{2}+\frac{\sqrt{21}}{2}\right)^q. \]
The left hand side is always of the form $m_1+m_2\sqrt{5}$ and the right hand side is always of the form $m_3+m_4\sqrt{21}$ for some rational numbers $m_1,\ m_2,\ m_3,\ m_4 > 0$.  This is clearly impossible and we conclude that $\tau(a)$ is not a rational multiple of $\tau(c)$.
\end{proof}

\section{Proof of Theorem A}\label{sec:main}

\begin{thmA}
There exists a non-residually finite torsion-free irreducible uniform lattice $\Gamma<\PSL_2(\RR)\times T_{24}$ such that $\Gamma$ is a hierarchically hyperbolic group but is not biautomatic.
\end{thmA}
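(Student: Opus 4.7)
I will take $\Gamma$ to be the explicit group constructed in \Cref{sec:example}. The structural properties are immediate from the preceding results: \Cref{lem:irrlattice} and \Cref{lem:non-resfin} give that $\Gamma$ is a non-residually finite irreducible uniform lattice in $\PSL_2(\RR)\times T_{24}$; torsion-freeness follows from standard HNN-extension theory since the vertex group $\mathrm{P}M$ and its edge subgroup $K$ are surface groups; and \Cref{prop.HHG} gives that $\Gamma$ is a hierarchically hyperbolic group. The main work is to show $\Gamma$ is not biautomatic, which I will do by contradiction, assuming throughout a (uniformly finite-to-one) biautomatic structure $(B,\calm)$ on $\Gamma$.

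The plan is to descend to the vertex group $G=\mathrm{P}M\cong\pi_1(\Sigma)$, where $\Sigma=G\backslash\Htwo$, and apply the machinery of \Cref{sec:stable-lengths,sec:quasi-smoothing,sec:invariant}. By \Cref{prop:quasiconvex}, $G$ is $\calm$-quasiconvex, so \Cref{thm:qconvex}\ref{it:qconvex-biauto} yields an associated biautomatic structure $(A,\call)$ on $G$; moreover the additive bound in that statement forces $\tau_\calm|_G=\tau_\call$. Since $G$ is hyperbolic, \Cref{prop:stable-len} combined with \Cref{lem:translation-lengths-same} gives that $\tau_\call$ takes rational values, and \Cref{prop:quasi-smoothing} together with \Cref{thm:MGT-limit,thm:MGT-main,lem:poslin} extends $\tau_\call$ to a continuous, homogeneous, positively linear function $\ocurrents{\Sigma}\to[0,\infty)$.

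The main obstacle will be to establish the identity $\tau_\call(\gamma^{(t)})=\tau_\call(\gamma)$ for every $\gamma\in\ocurves{\Sigma}$, required by \Cref{prop.criteria}. Fixing $\gamma$ represented by $g\in G$, homogeneity lets me pass to a power and assume $g\in K=G\cap t^{-1}Gt$; unpacking the definition of $\gamma^{(t)}$ via \Cref{prop:measures-PSL2R} presents it as a positive linear combination, over a transversal of $K$ in $G$, of the curves corresponding to the $G$-conjugacy classes of the elements $h_itgt^{-1}h_i^{-1}$. Each such element is $\Gamma$-conjugate to $g$, so by \Cref{lem:tau-conjinvariant} applied in $\Gamma$ combined with $\tau_\calm|_G=\tau_\call$ has the same stable $\call$-length as $g$, and positive linearity of $\tau_\call$ on $\ocurrents{\Sigma}$ closes the argument. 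Careful bookkeeping between the description of currents as measures on $\ogeod{\widetilde\Sigma}$ and as measures on $\PSL_2(\RR)$ is the delicate part of the step.

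Once this $t$-invariance is established, \Cref{lem:dense} together with $\Gamma=\langle G,t\rangle$ gives that $\langle G,t\rangle$ has dense image in $\PSL_2(\RR)$, and $t$ is an elliptic isometry commensurating $G$ by construction; \Cref{prop.criteria} then produces a constant $k\geq 0$ with $\tau_\call(g)=k\cdot\iota(g,\lambda_\Sigma)$ for all $g\in G$, i.e.\ $\tau_\call|_G$ is $k$ times the hyperbolic translation length. Since words in $\call$ are quasi-geodesic (\Cref{thm:biauto-consts}), $\tau_\call(a)>0$ for the infinite-order element $a\in G$, forcing $k>0$. Therefore $\tau_\call(a)/\tau_\call(c)$ equals the ratio of translation lengths $\tau(a)/\tau(c)$, but the left-hand side is rational by \Cref{prop:stable-len} while the right-hand side is irrational by \Cref{lem:trlen-irrational}, yielding the contradiction.
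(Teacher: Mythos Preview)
The proposal is correct and follows essentially the same approach as the paper's own proof: both assemble the structural facts from \Cref{sec:lattices,sec:example}, pass to the quasiconvex vertex group, extend $\tau_\call$ to currents via \Cref{sec:quasi-smoothing}, verify $t$-invariance using conjugacy invariance of $\tau_\calm$ in $\Gamma$ together with $\tau_\calm|_G=\tau_\call$, and then invoke \Cref{prop.criteria} and \Cref{lem:trlen-irrational} against the rationality from \Cref{prop:stable-len}. The only cosmetic discrepancy is the direction of the $t$-conjugation in your description of $\gamma^{(t)}$: with the paper's convention $\mu^{(t)}(E)=s^{-1}\sum_i\mu(tg_iE)$, the summands correspond to $(tg_i)^{-1}g(tg_i)=g_i^{-1}t^{-1}gtg_i$ rather than $h_itgt^{-1}h_i^{-1}$, so the relevant condition is $g\in tGt^{-1}$ rather than $g\in t^{-1}Gt$; this is exactly the bookkeeping you flag as delicate, and it does not affect the argument since the elements are in any case $\Gamma$-conjugate to $g$.
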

\begin{proof}
Let $\Gamma$ be the HNN-extension constructed in \Cref{sec:example}.  Then, $\Gamma$ is an irreducible uniform lattice in $\PSL_2(\RR)\times T_{24}$ by \Cref{lem:irrlattice}, non-residually finite by \Cref{lem:non-resfin}, torsion-free by construction, and a hierarchically hyperbolic group by \Cref{prop.HHG}. It remains to show $\Gamma$ is not biautomatic.

Let $\widehat{G} < \Gamma$ be a vertex stabiliser for the $\Gamma$-action on the Bass--Serre tree $\mathcal{T}_{24}$ of $\Gamma$. By construction, we have $\Gamma = \langle \widehat{G},\widehat{t} \rangle$ for an element $\widehat{t} \in \Gamma$ such that $t := \pi_{\PSL_2(\RR)}(\widehat{t})$ is an infinite order elliptic isometry of $\Htwo$. Moreover, the group $G := \pi_{\PSL_2(\RR)}(\widehat{G})$ is a torsion-free uniform lattice in $\PSL_2(\RR)$, and the projection $\pi_{\PSL_2(\RR)}(\Gamma) = \langle G,t \rangle$ is dense in $\PSL_2(\RR)$ by \Cref{lem:dense}. As $\widehat{G}$ is commensurated in $\Gamma$, it follows that $t^{-1}Gt \cap G$ has finite index in $G$. Let $\Sigma = G \backslash \Htwo$, so that $\Sigma$ is a closed orientable hyperbolic surface and $G \cong \pi_1(\Sigma)$.

Now suppose for contradiction that $(B,\calm)$ is a (uniformly finite-to-one) biautomatic structure on $\Gamma$. By \Cref{prop:quasiconvex}, the subgroup $\widehat{G} < \Gamma$ is $\calm$-quasiconvex; let $(A,\call)$ be the biautomatic structure on $\widehat{G}$ associated to $(B,\calm)$, as given by \Cref{thm:qconvex}. As $\pi_{\PSL_2(\RR)}$ maps $\widehat{G}$ isomorphically to $G$, we will identify $(A,\call)$ with a biautomatic structure on $G$. Consider the function $\normL{-}\colon G \to \RR$, as defined in \Cref{defn:biauto-notation}. By construction, $\normL{-}$ is invariant under conjugacy in $G$, and therefore factors through a function $\ocurves{\Sigma} \to \RR$ which we also denote by $\normL{-}$. Similarly, it follows from \Cref{lem:tau-conjinvariant} that the function $\tau_\call\colon G \to \RR$ (as defined in \Cref{defn:biauto-notation}) factors through a function $\tau_\call\colon \ocurves{\Sigma} \to \RR$.

By \Cref{prop:quasi-smoothing}, the function $\normL{-}\colon \ocurves{\Sigma} \to \RR$ satisfies the join and split quasi-smoothing properties, and therefore, by \Cref{thm:MGT-limit}, the function $\tau_\call'\colon \ocurves{\Sigma} \to \RR$ defined by $\tau_\call'(\gamma) = \lim_{n \to \infty} \normL{\gamma^n}/n$ is homogeneous and satisfies the join and split quasi-smoothing properties. By \Cref{lem:translation-lengths-same}, we have $\tau_\call' = \tau_\call$. Thus, by \Cref{thm:MGT-main}, $\tau_\call\colon \ocurves{\Sigma} \to \RR$ extends to a unique continuous homogeneous function $\tau_\call\colon \ocurrents{\Sigma} \to \RR$, which is also positively linear by \Cref{lem:poslin}.

We now claim that $\tau_\call(\gamma) = \tau_\call(\gamma^{(t)})$ for every $\gamma \in \ocurves{\Sigma}$ (in the notation of \Cref{sec:invariant}). Indeed, let $\gamma \in \ocurves{\Sigma}$, let $g \in G$ be an element corresponding to $\gamma$, let $r \geq 1$ be such that $t^{-1} g^r t \in G$, and let $g_1,\ldots,g_s$ be a right transversal of $t^{-1}Gt \cap G$ in $G$. Then, for each $i$, the measure $\gamma^r(tg_i-)$ is a curve on $\Sigma$ corresponding to the element $(tg_i)^{-1}g^rtg_i \in G$. Furthermore, it follows from \Cref{thm:qconvex} that the restriction of $\tau_\calm\colon \Gamma \to \RR$ to $\widehat{G} \cong G$ coincides with $\tau_\call\colon G \to \RR$; in particular, by \Cref{lem:tau-conjinvariant} we have $\tau_\call((tg_i)^{-1}g^rtg_i) = \tau_\calm((tg_i)^{-1}g^rtg_i) = \tau_\calm(g^r) = \tau_\call(g^r)$. As $\tau_\call\colon \ocurrents{\Sigma} \to \RR$ is positively linear, we then have
\begin{align*}
    r \cdot \tau_\call(\gamma^{(t)}) &= s^{-1} \sum_{i=1}^s r \cdot \tau_\call(\gamma(tg_i-)) = s^{-1} \sum_{i=1}^s \tau_\call(\gamma^r(tg_i-)) = s^{-1} \sum_{i=1}^s \tau_\call((tg_i)^{-1}g^rtg_i) \\
    &= s^{-1} \sum_{i=1}^s \tau_\call(g^r) = \tau_\call(g^r) = \tau_\call(\gamma^r) = r \cdot \tau_\call(\gamma),
\end{align*}
and thus $\tau_\call(\gamma) = \tau_\call(\gamma^{(t)})$, as claimed.

We now apply \Cref{prop.criteria} with $F = \tau_\call$; therefore, there exists a constant $k > 0$ such that $\tau_\call = k \cdot \iota(-,\lambda_\Sigma)$. In particular, since $\iota(\gamma,\lambda_\Sigma) > 0$ for all $\gamma \in \ocurves{\Sigma}$ by \Cref{prop:bonahon-filling}, we have $\frac{\tau_\call(\gamma_1)}{\tau_\call(\gamma_2)} = \frac{\iota(\gamma_1,\lambda_\Sigma)}{\iota(\gamma_2,\lambda_\Sigma)}$ for any $\gamma_1,\gamma_2 \in \ocurves{\Sigma}$. But note that $\iota(\gamma,\lambda_\Sigma)$ is precisely the length of the geodesic representative $\Sone \to \Sigma$ of $\gamma$, which is equal to the translation length of a lift of $\gamma$ in its action on $\Htwo$. In particular, if $\gamma_a,\gamma_c \in \ocurves{\Sigma}$ correspond to the elements $a,c \in G$ appearing in \Cref{lem:trlen-irrational}, we then have $\frac{\tau_\call(\gamma_a)}{\tau_\call(\gamma_c)} \notin \QQ$. This contradicts \Cref{prop:stable-len}, which implies that $\tau_\call$ takes only rational values.
\end{proof}

\AtNextBibliography{\small}
\printbibliography

\pagebreak
\begin{appendix}
\section{A presentation of the group} \label{app:pres}
The group $\Gamma$ constructed in \Cref{sec:example} is generated by $a$, $b$, $c$, $d$, and $t$ subject to the following $27$ relations:
\begin{align*}
a^{-1}  d  c  b  c^{-1}  a  b^{-1}  d^{-1} &= 1,\\
t  d^{-1}  a^2  c  b^{-1}  c^{-1}  t^{-1} &= d,\\
t  d^{-1}  a^2  c  a^{-1} d  t^{-1} &= a^2  c^{-1},\\
t  d^{-1}  a^2  c  b d^{-1} a^{-1} d  t^{-1} &= a  c  b^{-1},\\
t  d^{-1}  a  d  c^{-1}  d^{-1}  a  d  c  a^{-2} d  t^{-1} &= b  a  b  a^{-1},\\
t  d^{-1}  a  d  c^{-1} a^{-1}  d   t^{-1} &= b^2  c^{-1},\\
t  d^{-1}  a  d  b  c^{-1}  a  b^{-1}  c^{-1}  d  b  a^{-1}  c  a^{-1}  c  b^{-1}  c^{-1}  t^{-1} &= b  d  c^{-1}  b^{-1},\\
t  b^2  c^{-1}  a^{-2} d  t^{-1} &= c  b  d^{-1}  a^{-1},\\
t  d^{-2}  a^2 d^{-1} a^{-1} d  t^{-1} &= c^3,\\
t  d^{-1}  a b^{-1} d^{-1} a  d  t^{-1} &= c  d  c^{-1},\\
t  b a^{-1} c  b  a^{-1}  d  b  a^{-1}  c  a^{-1}  c  b^{-1}  c^{-1}  t^{-1} &= a^{-1}  b  c^{-1}  b^{-1},\\
t  d^{-1}  a  d  c  b  a^{-1} d^{-1} a^{-1} d  t^{-1} &= a^{-1}  c  b,\\
t  d^{-1}  a^2  d  c d^{-1} a^{-1} d  t^{-1} &= a^{-1}  d  b^{-1},\\
t  d^{-1}  a  d  c  b^{-1}  c^{-1} a^{-1} d  a^{-2} d  t^{-1} &= b^{-1}  a  b^{-1}  a^{-1},\\
t  d^{-1}  a  d  c  b^{-2}  a^{-2} d  t^{-1} &= b^{-1}  c  b  a^{-1},\\
t  d^{-1}  a  d  c  b^{-1}  c^{-1}  b  a^{-2} d  t^{-1} &= b^{-1}  d  b  a^{-1},\\
t  d^{-1}  a  d^2  b  a^{-1}  c  b^{-2}  c^{-1} d^{-1} a^{-1} d  t^{-1} &= c^{-1}  a^2,\\
t  d^{-1}  a  d  a^{-2}  d  b  a^{-1} d^{-1} a^{-1} d  t^{-1} &= c^{-1}  b^2,\\
t  d^{-1}  a^2  d^{-1}  a  c^{-1}  a  b  c^{-1}  a^{-1}  a^{-1}  d   t^{-1} &= a  b  a  d^{-1}  a^{-1},\\
t  d^{-1}  a^2  d^{-1}  c  b^{-2}  c^{-1} d^{-1} a^{-1} d  t^{-1} &= a  b^2  a,\\
t  d^{-1}  a^2  d^{-1}  a  b^{-3}  c^{-1} d^{-1} a^{-1} d  t^{-1} &= a  b  c  a,\\
t  d^{-1}  a^2  d^{-1}  a  c^{-1}  b^{-1}  c^{-1} d^{-1} a^{-1} d  t^{-1} &= a  b  d  a,\\
t  d^{-1}  a^2  c  b^{-1}  a^{-1}  d  b  a^{-1}  d  b  a^{-1}  c  a^{-1}  c  b^{-1}  c^{-1}  t^{-1} &= a  d  a  c^{-1}  b^{-1},\\
t  d^{-1}  a^2  c  b^{-2}  c^{-1}  d^{-1}  a^2 d^{-1} a^{-1} d  t^{-1} &= a  d  b  c,\\
t  d^{-1}  a^2  c^{-1}  d  b  a^{-1} d^{-1} a^{-1} d  t^{-1} &= a  b^{-1}  a  b,\\
t  d^{-1}  a  d  c^{-1}  d^{-1}  a  c  b^{-1}  a^{-1}  c  b^{-1}  c^{-1}  t^{-1} &= b  c  a  b^{-1},\\
t  d^{-1}  a  d  c^{-1}  d^{-1}  a  d^{-1}  a  d  b a^{-1} d  t^{-1} &= b  c  b  a^{-1}  c^{-1}.
\end{align*}

\end{appendix}

\end{document}
    \hline